\newcommand{\ignore}[1]{}
\newtheorem*{rep@theorem}{\rep@title}
\newcommand{\newreptheorem}[2]{%
\newenvironment{rep#1}[1]{%
 \def\rep@title{#2 \ref{##1}}%
 \begin{rep@theorem}}%
 {\end{rep@theorem}}}
\newtheorem{theorem}{Theorem}[section]
\newtheorem{example}{Example}
\newtheorem{lemma}[theorem]{Lemma}
\newtheorem{corollary}[theorem]{Corollary}
\newtheorem{conjecture}{Conjecture}
\newtheorem{definition}[theorem]{Definition}
\newcommand{\NN}{\ensuremath{\mathbb N}}
\newcommand{\RR}{\ensuremath{\mathbb R}}
\newcommand{\CC}{\ensuremath{\mathbb C}}
\newcommand{\PP}{\ensuremath{\mathbb P}}
\newcommand{\FF}{\ensuremath{\mathbb F}}
\def\arg#1{\mathtt{arg}\left(#1\right)}
\def\supp{\mathtt{supp}}
\def\rank{\mathtt{rank}}
\def\adim{\mathtt{affine}\mbox{-}\mathtt{dim}}
\def\dim{\mathtt{dim}}
\renewcommand{\L}{\mathcal L}
\newcommand{\V}{\mathcal V}
\title{On the number of ordinary lines determined by sets in complex space}
\author{Abdul Basit\thanks{Department of Computer Science, Rutgers University.
Email: \texttt{abasit@cs.rutgers.edu}.}\and
Zeev Dvir\thanks{Princeton University, Department of Mathematics and Department of Computer Science. Research supported by NSF CAREER award DMS-1451191 and NSF grant CCF-1523816.
Email: \texttt{zeev.dvir@gmail.com}.}\and
Shubhangi Saraf\thanks{Department of Computer Science and Department of Mathematics, Rutgers University. Research supported in part by NSF grants CCF-1350572 and CCF-1540634.
Email: \texttt{shubhangi.saraf@gmail.com}.}\and
Charles Wolf\thanks{Department of Mathematics, Rutgers University. Research supported in part by NSF grant CCF-1350572.
Email: \texttt{charlesi@post.bgu.ac.il}.}
}
\begin{document}
\pagenumbering{arabic}

\maketitle

\begin{abstract}
	Kelly's theorem states that a set of $n$ points affinely spanning $\mathbb{C}^3$ must determine at  least one ordinary complex line (a line incident to exactly two of the points). Our main theorem shows that such sets determine at least $3n/2$ ordinary lines, unless the configuration has $n-1$ points in a plane and one point outside the plane (in which case there are at least $n-1$ ordinary lines). In addition, when at most $n/2$ points are contained in any plane, we prove  stronger bounds that take advantage of the existence of lines with four or more points (in the spirit of Melchior's and Hirzebruch's inequalities). Furthermore, when the points span four or more dimensions, with at most $n/2$ points contained in any three dimensional affine subspace, we show that there must be a quadratic number of ordinary lines.
\end{abstract}


\section{Introduction}

Let $\V = \{v_1, v_2, \dots, v_n\}$ be a set of $n$ points in $\CC^d$. We denote by $\L(\V)$ the set of lines determined by points in $\V$,  and by $\L_r(\V)$ (resp. $\L_{\geq r}(\V)$) the set of lines in $\L(\V)$ that contain exactly (resp. at least) $r$ points. Let $t_r(\V)$ denote the size of $\L_r(\V)$. Throughout the write-up we omit the argument $\V$ when the context makes it clear. We refer to $\L_2$ as the set of {\em ordinary lines}, and $\L_{\geq 3}$ as the set of {\em special lines}.

A well known result in combinatorial geometry is the Sylvester-Gallai theorem. 
\begin{theorem}[Sylvester-Gallai theorem]\label{th:SG}
Let $\V$ be a set of $n$ points in $\RR^2$ not all on a line.  Then there exists an ordinary line determined by points of $\V$.
\end{theorem}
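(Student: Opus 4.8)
The plan is to use Kelly's extremal argument based on ordinary Euclidean distance. First I would form the collection of all incident pairs $(p,\ell)$ where $p \in \V$, $\ell$ is a line passing through at least two points of $\V$, and $p \notin \ell$. This collection is finite (there are finitely many points and finitely many determined lines) and it is nonempty precisely because $\V$ is not collinear, so some point misses some determined line. Over this finite family I would select a pair $(p,\ell)$ minimizing the perpendicular distance $d(p,\ell)$ from the point to the line.

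Next I would argue that this minimizing line $\ell$ must be ordinary. Suppose for contradiction that $\ell$ carries at least three points of $\V$. Drop the perpendicular from $p$ to $\ell$ and let $f$ be its foot, and coordinatize $\ell$ so that $f$ sits at the origin. Among any three distinct real coordinates at least two share a weak sign, so at least two of the three points, say $q$ and $r$ with $q$ weakly closer to $f$, lie on the same closed ray from $f$. I would then compare the distance from $q$ to the line $\overline{pr}$ with $d(p,\ell)$: writing $p=(0,h)$ with $h=d(p,\ell)>0$ and $r=(x_r,0)$, $q=(x_q,0)$ with $0\le x_q<x_r$, a one-line computation gives $d(q,\overline{pr}) = h(x_r-x_q)/\sqrt{h^2+x_r^2} < h = d(p,\ell)$, since $x_r - x_q \le x_r < \sqrt{x_r^2+h^2}$. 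As $q\notin\overline{pr}$, the pair $(q,\overline{pr})$ is a legitimate member of our family with strictly smaller distance, contradicting minimality. Hence $\ell$ is ordinary.

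The only delicate point — and the step I would be most careful about — is the geometric comparison, together with the bookkeeping that makes the pigeonhole clean: one must check the degenerate case $q=f$ (i.e. $x_q=0$), and confirm that $q$ genuinely fails to lie on $\overline{pr}$ (true because $\overline{pr}$ meets $\ell$ only at $r$, and $x_q\ne x_r$). The computation above absorbs the degenerate case automatically, and the strictness of the inequality is exactly what the strict positivity of $h$ supplies, so the contradiction is airtight once the configuration is set up correctly.

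As a thematically relevant alternative, worth keeping in reserve since the paper later invokes Melchior- and Hirzebruch-type inequalities, I would dualize the points to lines in $\RR\PP^2$ and apply Euler's formula $V-E+F=2$ to the resulting arrangement. Counting vertices, edges, and faces by their incidences yields Melchior's inequality
\[
t_2 \;\ge\; 3 + \sum_{r\ge 4}(r-3)\,t_r \;\ge\; 3,
\]
which not only proves the existence of an ordinary line but also furnishes the sharper quantitative bound $t_2 \ge 3$ for free and connects directly to the machinery used in the rest of the paper.
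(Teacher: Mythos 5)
Your proof is correct, and it is worth noting that the paper itself never proves Theorem~\ref{th:SG}: it cites the result as classical (first proved by Melchior, reproved by Gallai), and the only proof-bearing statement it records is Melchior's inequality (Theorem~\ref{th:Melchior}), which gives the stronger conclusion $t_2(\V) \geq 3$. Your primary argument is Kelly's extremal-distance proof, a genuinely different and more elementary route: you minimize $d(p,\ell)$ over the finite, nonempty set of non-incident point--line pairs, and your computation $d(q,\overline{pr}) = h(x_r-x_q)/\sqrt{h^2+x_r^2} < h$ is airtight, including the degenerate case $x_q=0$ and the check that $q\notin\overline{pr}$; the pigeonhole on signs (two of three distinct coordinates lie on a common closed ray from the foot $f$) is also handled correctly. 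What this buys is brevity and self-containedness, but it is intrinsically real (it leans on the metric and the ordering of $\RR$, which is consistent with the theorem being false over $\CC$, as the paper's Fermat configuration shows) and it is purely existential, producing a single ordinary line with no quantitative information. The dual arrangement/Euler-formula alternative you keep in reserve is precisely Melchior's proof, i.e., the route the paper implicitly endorses: it costs projective duality and Euler's formula for line arrangements, but yields $t_2 \geq 3$ together with the structural term $\sum_{r\geq 4}(r-3)t_r$, which is the prototype for the refined bounds the paper pursues in Theorems~\ref{th:3n/2} and \ref{th:main}.
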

The statement was conjectured by Sylvester in 1893~\cite{syl1893}, and the first published proof is by Melchior~\cite{mel40}. Later proofs were given by Gallai in 1944~\cite{gal44} and others; there are now several different proofs of the theorem. Of particular interest is the following result by Melchior~\cite{mel40}.
\begin{theorem}[Melchior's inequality]\label{th:Melchior}
Let $\V$ be a set of $n$ points in $\RR^2$ that are not collinear.  Then
\[ t_2(\V) \geq 3 + \sum \limits_{r \geq 4} (r - 3) t_r(\V). \]
\end{theorem}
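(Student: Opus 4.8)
The plan is to prove Melchior's inequality via Euler's formula applied to the dual arrangement in the real projective plane.

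**Setting up the dual.** The key idea is to pass to the dual: each point $v_i \in \RR^2$ becomes a line $\ell_i$ in the real projective plane $\PP^2(\RR)$, and each line determined by the point set becomes a vertex where the dual lines meet. A line of $\L(\V)$ containing exactly $r$ points dualizes to a vertex where exactly $r$ of the dual lines cross. These $n$ lines partition $\PP^2(\RR)$ into a cell complex — a planar graph drawn on the projective plane — with a set $V$ of vertices, $E$ of edges, and $F$ of faces.

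**Counting via Euler's formula.** First I would record Euler's formula for the projective plane, which has Euler characteristic $1$, giving $|V| - |E| + |F| = 1$. Next I would express each quantity in terms of the $t_r$. For vertices, $|V| = \sum_{r \geq 2} t_r$, since each vertex is a crossing of exactly $r$ dual lines for some $r \geq 2$. For edges, I would count incidences: a vertex where $r$ lines meet has $2r$ edge-ends emanating from it (each of the $r$ lines contributes two arcs), so $\sum_r 2r\, t_r = 2|E|$, giving $|E| = \sum_{r} r\, t_r$. For faces I would use the fact that every face is bounded by at least $3$ edges (no two lines bound a digon after we account for the arrangement being simple in the appropriate sense), and each edge borders exactly two faces, yielding $3|F| \leq 2|E|$, i.e. $|F| \leq \tfrac{2}{3}|E|$.

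**Combining the inequalities.** Substituting the face bound into Euler's formula gives $1 = |V| - |E| + |F| \leq |V| - \tfrac{1}{3}|E|$, hence $3 \leq 3|V| - |E|$. Expanding using the expressions above,
\[
3 \leq 3\sum_{r \geq 2} t_r - \sum_{r \geq 2} r\, t_r = \sum_{r \geq 2}(3 - r)\, t_r = t_2 - \sum_{r \geq 4}(r-3)\,t_r,
\]
where the $r=3$ term vanishes. Rearranging yields exactly $t_2 \geq 3 + \sum_{r \geq 4}(r-3)\,t_r$, as claimed.

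**Main obstacle.** The step I expect to require the most care is justifying the face bound $3|F| \leq 2|E|$, i.e. that no face of the arrangement is bounded by fewer than three edges. One must verify that the arrangement contains no degenerate faces (such as a $2$-gon), which relies on the points not being collinear so that at least three distinct dual lines appear and the arrangement is genuinely two-dimensional; handling the faces that touch the line at infinity and confirming the correct Euler characteristic for $\PP^2(\RR)$ also demand attention. Once the combinatorial bookkeeping of $|V|$, $|E|$, and $|F|$ is pinned down, the algebra is immediate.
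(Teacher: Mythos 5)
Your proof is correct, and it is precisely the classical argument of Melchior: dualize to a line arrangement in $\PP^2(\RR)$, apply Euler's formula $|V|-|E|+|F|=1$, and use the face bound $3|F|\leq 2|E|$, which is valid exactly because non-collinearity of the points makes the dual lines non-concurrent (a pencil of lines would have digon faces). The paper itself states this theorem as a cited result from Melchior's work rather than proving it, so your write-up supplies the standard proof that the citation refers to; the one step deserving full care is the one you flagged, namely that every face of an arrangement of $n\geq 3$ non-concurrent projective lines has at least three boundary edges.
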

Theorem~\ref{th:Melchior} in fact proves something stronger than the Sylvester-Gallai theorem, i.e., there are at least three ordinary lines. A natural question to ask is how many ordinary lines must a set of $n$ points, not all on a line, determine. This led to what is known as the {\em Dirac-Motzkin conjecture}.
\begin{conjecture}[Dirac-Motzkin conjecture]
For every $n \neq 7, 13$, the number of ordinary lines determined by $n$ non-collinear points in the plane is at least $\left\lceil \frac{n}{2} \right\rceil$.
\end{conjecture}
There were several results on this question (see~\cite{motzkin51, kelly58, cs93}), before Green and Tao~\cite{gt13} resolved it for large enough point sets.
\begin{theorem}[Green and Tao~\cite{gt13}]\label{Green-Tao}
Let $\V$ be a set of $n$ points in $\RR^2$, not all on a line.  Suppose that $n \geq n_0$ for a sufficiently large absolute constant $n_0$. Then $t_2(\V) \geq \frac{n}{2}$ for even $n$ and $t_2(\V) \geq \left\lfloor \frac{3n}{4}\right\rfloor$ for odd $n$.
\end{theorem}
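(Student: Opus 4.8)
The plan is to reduce the exact extremal count to a structure theorem classifying configurations with few ordinary lines, and then to perform an explicit count on algebraic curves of degree three. The target bounds are of order $n$, so it is enough to treat configurations with at most $Kn$ ordinary lines (taking $K \geq 1$); any configuration determining more than $n$ ordinary lines already satisfies both claimed bounds. The heart of the argument is thus a structure theorem: there is an absolute constant $K$ such that if $\V$ determines at most $Kn$ ordinary lines (with $n$ large relative to $K$), then, after discarding $O(K)$ exceptional points, the remaining points of $\V$ all lie on a single cubic curve — where I allow the cubic to be irreducible, or degenerate as a conic together with a line, or a union of three lines.

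To prove the structure theorem I would work projectively and pass to the dual line arrangement, in which ordinary lines of $\V$ become ordinary points (points lying on exactly two lines). Melchior's inequality, Theorem~\ref{th:Melchior}, together with Euler's formula for the induced planar subdivision, bounds the number of high-multiplicity dual points and forces most incidences to be organized along a low-degree algebraic curve. Upgrading this from ``mostly on a bounded-degree curve'' to ``mostly on a cubic'' is where I expect the principal difficulty to lie: one must control the $O(K)$ exceptional points uniformly and rule out intermediate-degree curves, which demands a quantitatively sharp induction rather than a soft limiting argument.

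Granting that $\V$ lies, up to $O(K)$ points, on an irreducible cubic $\gamma$, I would invoke the group law: fixing a flex as the identity makes the smooth locus of $\gamma$ an abelian group in which three points are collinear exactly when they sum to zero. An ordinary line through $a,b \in \V$ then corresponds to the condition $-(a+b) \notin \V$, while a tangent line contributes $-2a \notin \V$, so counting ordinary lines becomes an additive problem about the set $\V$ inside the real points of the group — isomorphic to $\RR/\ZZ$ or $\RR/\ZZ \times \ZZ/2\ZZ$ in the elliptic case, and to $\RR$ or $\RR^{\times}$ in the degenerate cases. An equality-forcing analysis shows the count is minimized when $\V$ is close to a coset of a finite subgroup; for even $n$ such cosets exist and realize the bound $n/2$, while for odd $n$ the relevant subgroups cannot be arranged as economically, pushing the minimum up to $\lfloor 3n/4 \rfloor$. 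The parity split in the statement is precisely this dichotomy.

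Finally I would dispose of the degenerate-cubic cases by direct counting, verifying that a conic-plus-line or a three-lines configuration cannot beat the cubic optimum, and I would exhibit the extremal examples (Böröczky-type configurations built from the vertices of a regular polygon together with points at infinity) that certify sharpness. The delicate thread running through the whole proof is matching the combinatorial regime, governed by Melchior-type inequalities, to the algebraic regime, governed by the group law, while losing only $O(1)$ points; it is this tight bookkeeping that upgrades an asymptotic $cn$ lower bound into the sharp constants $\tfrac{1}{2}$ and $\tfrac{3}{4}$, and it is the step I expect to be the true bottleneck of the argument.
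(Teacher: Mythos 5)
First, a point of context: the paper does not prove this statement at all --- Theorem~\ref{Green-Tao} is quoted as background from Green and Tao's work \cite{gt13}, so there is no internal proof to compare yours against. Judged on its own terms, your proposal is an accurate reconstruction of the strategy Green and Tao actually use: reduce to configurations with at most $Kn$ ordinary lines, prove a structure theorem placing all but $O(K)$ points on a cubic (irreducible, conic plus line, or three concurrent/non-concurrent lines), convert collinearity into the group law on the cubic so that counting ordinary lines becomes an additive problem about near-cosets, and match against the B\"or\"oczky examples to obtain the sharp constants $\tfrac{1}{2}$ and $\left\lfloor 3n/4\right\rfloor$, with the even/odd split arising exactly from the coset dichotomy you describe.

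Nevertheless, as a proof this has a genuine gap, and you flag it yourself: the structure theorem is asserted, not proved. The claim that Melchior's inequality (Theorem~\ref{th:Melchior}) together with Euler's formula ``forces most incidences to be organized along a low-degree algebraic curve'' does not follow from anything you wrote --- it is the content of essentially the whole of Green and Tao's long argument, which needs the analysis of triangular cells in the dual line arrangement, a decomposition of the configuration into ``clusters,'' and nontrivial additive-combinatorial input to pass from ``bounded-degree curve'' to ``cubic'' while discarding only $O(K)$ points uniformly in $n$. Likewise, the ``equality-forcing analysis'' in the group-law step and the bookkeeping that reinstates the $O(K)$ exceptional points without degrading the exact constants are named but never carried out; these are precisely the places where the sharp bounds $\tfrac{n}{2}$ and $\left\lfloor \tfrac{3n}{4}\right\rfloor$ (as opposed to some $cn$) are won or lost. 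So what you have is a correct, well-informed roadmap of the known proof rather than a proof: every step you label as the bottleneck is where all of the work lives, and nothing in the write-up would allow a reader to fill those steps in.
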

There are several survey articles on the topic, see for example \cite{bm90}, and \cite{gt13} provides a nice history of the problem.

The Sylvester-Gallai theorem is not true when the field $\RR$ is replaced by $\CC$. The well known Hesse configuration, realized by the nine inflection points of a non-degenerate cubic, provides a counterexample. A more general example is the following:
\begin{example}[Fermat configuration]\label{eg:fermat-example}
For any positive integer $k \geq 3$, let $\V$ be inflection points of the Fermat Curve $X^k + Y^k + Z^k = 0$ in $\PP\CC^2$. Specifically, let
$$ \V_1 = \bigcup\limits_{i=1}^{k}\{[0:1:\omega^i]\}, \quad \V_2 = \bigcup\limits_{i=1}^{k}\{[\omega^i:0:1]\} , \quad \mbox{ and } \quad \V_3 = \bigcup\limits_{i=1}^{k}  \{[1:\omega^i:0]\}, $$
where $\omega$ is a $k^{th}$ root of $-1$. Then $\V = \V_1 \cup \V_2 \cup \V_3$.

The point set $\V$ does not determine any ordinary lines. It is easy to check that the sets $\V_1$, $\V_2$ and $\V_3$ lie on the lines $X = 0$, $Y = 0$, and $Z~=~0$, respectively. Every other line contains exactly three points. Indeed, consider, without loss of generality, points $u = [0:1:\omega^i] = [0:\omega^{-i}:1] \in \V_1$ and $v = [\omega^j:0:1] \in \V_2$. Then the point $w = [-\omega^{j}: \omega^{-i} : 0] = [1: -\omega^{-i-j}:0] = [1: \omega^{k-i-j}:0] \in \V_3$ is collinear with $u$ and $v$. 
\footnote{We note that the while Fermat configuration as stated lives in the projective plane, it can be made affine by any projective transformation that moves a line with no points to the line at infinity.}
\end{example}
In response to a question of Serre~\cite{ser66}, Kelly~\cite{kelly86} showed that when the points span more than two dimensions, the point set must determine at least one ordinary line.
\begin{theorem}[Kelly's theorem~\cite{kelly86}]\label{th:Kelly}
Let $\V$ be a set of $n$ points in $\CC^3$ that are not contained in a plane.  Then there exists an ordinary line determined by points of $\V$.
\end{theorem}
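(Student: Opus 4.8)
The plan is to argue by contradiction: assume $\V \subset \CC^3$ spans the whole space (is not contained in a plane) yet determines no ordinary line, so that every line through two points of $\V$ in fact contains a third. The first thing to flag is the central difficulty. Unlike the real case, one cannot reduce to a plane and invoke Sylvester--Gallai, since the complex planar statement is false (the Fermat configuration of Example~\ref{eg:fermat-example} has no ordinary lines). Worse, a \emph{generic} linear projection $\PP\CC^3 \dashrightarrow \PP\CC^2$ is injective on $\V$ and preserves the entire incidence structure -- three points become collinear in the image only if the center of projection lies in the plane they span, which a generic center avoids -- so no purely combinatorial planar inequality applied to a generic image can detect the three-dimensionality. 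The dimension hypothesis must therefore be used in an essential, linear-algebraic way rather than a matroidal one.

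The approach I would take is the rank-bound method. Homogenize the points as rows of an $n \times 4$ matrix $V$, so that $\rank(V) = 4$ precisely because $\V$ is non-coplanar. The assumption that every line carries a third point yields, for each pair $v_i, v_j$, a point $v_k$ on the line $v_i v_j$ and hence a linear relation $a_i v_i + a_j v_j + a_k v_k = 0$ with all three coefficients nonzero. Assembling a well-chosen family of these relations as the rows of a matrix $A$ (columns indexed by the $n$ points, three nonzeros per row) gives $A V = 0$, so that $\rank(V) \le n - \rank(A)$. Thus it suffices to prove $\rank(A) \ge n - 3$, which forces $\rank(V) \le 3$ and contradicts non-coplanarity. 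The role of the no-ordinary-lines hypothesis is that it lets us build $A$ as a \emph{design matrix}: every point lies on many special lines, so each column has large support, and no pair of columns co-occurs in too many rows. This combinatorial regularity is exactly what should force $A$ to have near-full rank.

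The hard part is precisely this rank lower bound for complex design matrices. Over $\RR$ one might hope to exploit sign patterns or an ordering, but over $\CC$ no such structure is available, so I would instead rescale the columns of $A$ (matrix scaling) to make the associated Gram matrix close to the identity and then bound the number of small singular values by a second-moment/perturbation estimate controlled by the design parameters. Carrying this out with constants sharp enough to reach $n - 3$, rather than merely $n - O(1)$, is the crux of the argument. As a classical alternative I would fall back on Kelly's original route through Hirzebruch's inequality: projecting instead from one of the configuration points $v \in \V$ collapses each line through $v$ (which now carries at least two further points) to a single image point, converting the no-ordinary-lines hypothesis into a multiplicity statement about the planar image, to which a Hirzebruch-type inequality can be applied to reach the contradiction.
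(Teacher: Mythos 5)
Your proposal is correct and takes essentially the same route as the paper's framework: the paper itself does not reprove Kelly's theorem (it cites Hirzebruch-based and elementary proofs, notably \cite{dsw14}), but its own machinery in Sections~3--5 is precisely the design-matrix construction, matrix scaling, and diagonal-dominance rank bound you outline, and applying Lemmas~\ref{th:3n/2 Prop S} and~\ref{le:not-propertys} with $t_2(\V)=0$ forces $\adim(\V)\leq 2$, the desired contradiction. Your identification of the crux (pushing the rank bound to $n-3$ rather than $n-O(1)$, with the scaling step requiring care, e.g.\ Property-$S$ or row repetition) and your fallback through Hirzebruch's inequality both match the paper's account of how this theorem is actually proved.
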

Kelly's proof of Theorem~\ref{th:Kelly} used a deep result of Hirzebruch~\cite{hir83}, referred to as Hirzebruch's inequality, from algebraic geometry. More elementary proofs of Theorem~\ref{th:Kelly} were given in \cite{eps06} and \cite{dsw14}. To the best of our knowledge, no lower bound greater than one is known for the number of ordinary lines determined by point sets spanning $\CC^3$. Improving on the techniques of \cite{dsw14}, we make the first progress in this direction.
\begin{theorem}\label{th:3n/2}
Let $\V$ be a set of $n \geq 24$ points in $\CC^3$ not contained in a plane. Then $\V$ determines at least $\frac{3}{2}n$ ordinary lines, unless $n - 1$ points are on a plane in which case there are at least $n - 1$ ordinary lines.
\end{theorem}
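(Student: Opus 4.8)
The plan is to prove the bound by a global double-counting of incidences between points and ordinary lines, reduced to a local statement at each point. Writing $r_2(p)$ for the number of ordinary lines through a point $p \in \V$, every ordinary line is counted at both of its endpoints, so $\sum_{p \in \V} r_2(p) = 2\, t_2(\V)$. Hence it suffices to show that, away from the exceptional configuration, the points carry on average at least three ordinary lines, i.e. $\sum_p r_2(p) \geq 3n$. To access $r_2(p)$ I would project $\V \setminus \{p\}$ from $p$ onto a generic plane, obtaining a set $\widehat{\V}_p$ of distinct points in $\PP^2$ carrying multiplicities, where the multiplicity of an image point is the number of points of $\V$ lying on the corresponding line through $p$. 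Because $\V$ spans $\CC^3$, the set $\widehat{\V}_p$ is not contained in a line, and an ordinary line through $p$ is exactly an image point of multiplicity one; thus $r_2(p)$ equals the number of simple points of $\widehat{\V}_p$.

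The heart of the argument is a local structure lemma: if $r_2(p) \leq 2$, then $\V$ is highly degenerate around $p$. I would prove this by feeding the projected configuration into Hirzebruch's inequality (Theorem~\ref{th:Hirzebruch}), which holds over $\CC$, together with the robust Sylvester--Gallai and design-matrix estimates of~\cite{dsw14}. Having at most two simple points forces almost every line of $\widehat{\V}_p$ to be rich, and such a planar complex configuration must be essentially a pencil; lifting this back through the projection shows that all but a bounded number of points of $\V$ lie on a single plane through $p$. This is the step I expect to be the main obstacle: over $\CC$ there is no Sylvester--Gallai theorem, so ``few simple points'' cannot be converted into structure by a direct counting argument, and one must control the genuinely complex obstructions (the Hesse and Fermat configurations) while squeezing the constants hard enough that the conclusion ``near-pencil'' is actually forced, rather than merely ``close to low rank.''

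With the local lemma in hand I would run a case analysis on $k = \max_{H} |\V \cap H|$ over planes $H$. If no point is poor (every $r_2(p) \geq 3$) the double count immediately yields $t_2 \geq \tfrac{3}{2}n$. If a poor point exists, the local lemma produces a plane containing almost all points, and reconciling the planes arising at different poor points (they must coincide, since otherwise the off-plane points create many ordinary lines) pins down a single plane $H_0$ containing almost all of $\V$. When $|\V \cap H_0| = n-1$ we are in the stated exceptional configuration, where the unique point off $H_0$ already lies on $n-1$ ordinary lines, giving $t_2 \geq n-1$; in every remaining regime, where $k$ is bounded away from $n$, I would combine the planar Green--Tao and Hirzebruch bounds inside $H_0$ with the ordinary lines forced by each point off $H_0$, the surplus from the off-plane points pushing the total above $\tfrac{3}{2}n$. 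The hypothesis $n \geq 24$ enters precisely here: Hirzebruch's inequality requires the exclusion of near-pencils ($t_n = t_{n-1} = t_{n-2} = 0$), so the large-$k$ cases must be peeled off separately, and the constants in the local lemma and the reconciliation step only beat $\tfrac{3}{2}n$ once $n$ clears a small threshold, which I would present as the point at which every inequality in the chain becomes simultaneously valid.
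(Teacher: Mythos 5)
Your argument has a genuine gap, and it is exactly at the step you flag as the ``main obstacle'': the local structure lemma is not merely hard, it is false. Consider the cone over a Fermat configuration: let $p$ be a point of $\V$, take the $3k$ directions of Example~\ref{eg:fermat-example} viewed as points of the plane at infinity, and place $m \geq 2$ points of $\V$ on each of the $3k$ lines through $p$ in these directions, so $n = 3km+1$. Every line through $p$ then contains at least $3$ points, so $r_2(p) = 0$ and $p$ is ``poor.'' Yet no plane through $p$ contains more than roughly $n/3$ of the points: a plane through $p$ projects to a line in $\PP\CC^2$, and no line of the Fermat configuration contains more than $k$ of its $3k$ points, so the plane picks up at most $km+1$ points of $\V$. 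Thus ``$r_2(p) \leq 2$ implies all but a bounded number of points lie on a plane through $p$'' fails badly, and with it the entire case analysis: once a poor point exists your double count $\sum_p r_2(p) = 2\,t_2(\V)$ gives nothing, and the reconciliation of planes never gets off the ground. (The theorem is still true for the cone --- its many ordinary lines all avoid $p$ --- but your argument has no way to see them.) A secondary problem is the proposed tool: Hirzebruch's inequality (Theorem~\ref{th:Hirzebruch}) constrains $t_2 + \tfrac{3}{4}t_3$ of an \emph{unweighted} planar configuration, whereas $r_2(p)$ is the number of \emph{multiplicity-one} points of the weighted projection $\widehat{\V}_p$; these are unrelated quantities (the weighted Fermat projection above has no simple points while satisfying every Hirzebruch-type constraint), so the inequality cannot be ``fed'' the projection to control $r_2(p)$.

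It is worth noting how the paper dodges precisely this obstruction. Its dichotomy runs in the opposite direction from yours: using the design-matrix/Property-$S$ machinery (Lemmas~\ref{th:3n/2 Prop S} and \ref{le:not-propertys}), one concludes that either $t_2(\V)$ is already large, or there exists a point with \emph{many} (about $\tfrac{2}{3}n$) ordinary lines through it; such a point is then pruned and the argument iterated a bounded number of times. No structural claim is ever made about points with few ordinary lines, which is essential, since (as the cone shows) such points carry no planarity information over $\CC$. If you want to salvage a projection-based approach, you would need a statement robust against cones --- i.e., one that treats richness of the projected configuration's lines rather than multiplicities of its points --- and no such planar statement strong enough to recover the $\tfrac{3}{2}n$ bound is currently known over $\CC$.
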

In the latter case, let $v$ be the point not on the plane. Every line determined by $\V$ containing $v$ is ordinary, so there are at least $n-1$ ordinary lines. On the other hand, it is possible that there are exactly $n - 1$ ordinary lines. In particular, let $\V$ consist of the Fermat Configuration, for some $k \geq 3$, on a plane, and one point $v$ not on the plane. Then $\V$ has $3k + 1$ points, and the only ordinary lines determined by $\V$ are lines that contain $v$, so there are exactly $3k$ ordinary lines. We are not aware of any examples that achieve the $\frac{3}{2}n$ bound when at most $n - 2$ points are contained in any plane. 
Using a similar argument, for point sets in $\RR^3$, Theorems~\ref{Green-Tao} and \ref{th:3n/2} give us the following easy corollary.
\begin{corollary}
Let $\V$ be a set of $n$ points in $\RR^3$ not contained in a plane.  Suppose that $n \geq n_0$ for a sufficiently large absolute constant $n_0$. Then $\V$ determines at least $\frac{3}{2}(n - 1)$ ordinary lines.
\end{corollary}

Hirzebruch's inequality is similar in spirit to Melchior's inequality and gives bounds on the number of ordinary lines by taking into consideration the number of lines with four or more points.
\begin{theorem}[Hirzebruch's inequality~\cite{hir83}]\label{th:Hirzebruch}
Let $\V$ be a set of $n$ points in $\CC^2$, such that $t_n(\V) = t_{n-1}(\V) = t_{n-2}(\V) = 0$.  Then
$$ t_2(\V) + \frac{3}{4}t_3(\V) \geq n + \sum_{r \geq 5} (2r - 9) t_r(\V). $$
\end{theorem}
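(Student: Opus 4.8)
The plan is to prove the inequality through the theory of complex projective surfaces, the essential ingredient being the Bogomolov--Miyaoka--Yau (BMY) inequality, which asserts that a smooth complex surface $Y$ of general type satisfies $c_1^2(Y) \le 3\,c_2(Y)$. First I would pass to the projective dual in $\PP^2(\CC)$: a configuration $\V$ of $n$ points dualizes to an arrangement $\mathcal A$ of $n$ lines, and a line of $\L(\V)$ through exactly $r$ points dualizes to a point lying on exactly $r$ of the lines (a singular point of multiplicity $r$). Thus $t_r(\V)$ equals the number of multiplicity-$r$ points of $\mathcal A$, and the hypothesis $t_n = t_{n-1} = t_{n-2} = 0$ becomes the statement that no point of $\mathcal A$ has multiplicity $\ge n-2$ (at most $n-3$ of the lines are concurrent), i.e. $\mathcal A$ is far from being a pencil. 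This non-degeneracy is precisely what guarantees that the surface built below is of general type, so that BMY may be applied.

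Next I would construct an appropriate branched cover. Let $X \to \PP^2$ be the blow-up at every point of $\mathcal A$ of multiplicity at least $3$; after blowing up, the total transform of the arrangement is a normal-crossing divisor, with the proper transforms of the $n$ lines meeting the exceptional curves transversally. Fixing an integer $m \ge 2$, one forms the abelian Kummer cover $Y \to X$ of exponent $m$ branched along the $n$ line-components (such a cover exists once one uses the relation among the line classes in $\PP^2$ to make the relevant class $m$-divisible), and then passes to the minimal desingularization $\tilde Y$, and if needed its minimal model.

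The heart of the argument is then a Chern-number computation. Using the standard behaviour of Chern classes under blow-up and under abelian covers (Hurwitz-type formulas), one expresses both $c_1^2(\tilde Y)$ and $c_2(\tilde Y)$ as explicit functions of $n$, the exponent $m$, and the multiplicities $t_r$ of $\mathcal A$. Imposing $c_1^2(\tilde Y) \le 3\,c_2(\tilde Y)$ yields a polynomial inequality in the $t_r$ whose coefficients depend on $m$; choosing the exponent appropriately (as in Hirzebruch's argument) and simplifying collapses it to
\[ t_2 + \tfrac34 t_3 \ge n + \sum_{r \ge 5}(2r-9)\, t_r. \]

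The main obstacle is precisely this geometric core: constructing the cover correctly, carrying out the Chern-number bookkeeping (including the contributions of the exceptional divisors over each multiplicity-$r$ point), and --- most delicately --- verifying that $\tilde Y$ is of general type, which is where the non-degeneracy hypothesis $t_n = t_{n-1} = t_{n-2} = 0$ is indispensable. By contrast, the final combinatorial translation from the Chern inequality to the stated bound is routine once the geometric inequality is secured; the truly deep content is the BMY inequality itself, which rests on Yau's solution of the Calabi conjecture.
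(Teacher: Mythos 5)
First, an important point of comparison: the paper does not prove Theorem~\ref{th:Hirzebruch} at all. It is stated as known background, attributed to Hirzebruch~\cite{hir83}, and used only to explain the provenance of Kelly's theorem (Theorem~\ref{th:Kelly}); nothing in the paper depends on reproving it. So your proposal can only be measured against the argument in the cited source, and your sketch does reproduce its skeleton: dualize the $n$ points to an arrangement of $n$ lines in $\PP\CC^2$, so that $t_r$ counts points of multiplicity $r$, blow up the points of multiplicity at least $3$, form abelian (Kummer) covers branched along the arrangement, compute Chern numbers, and invoke the Bogomolov--Miyaoka--Yau inequality. As a description of the strategy this is accurate.

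As a proof, however, there is a genuine gap, in two respects. First, every step with actual content is deferred: the Chern-number bookkeeping, the verification that the resulting surface is of general type (which is where the hypothesis $t_n=t_{n-1}=t_{n-2}=0$ must do work), and the choice of exponent \emph{are} the proof, and your closing paragraph lists them as unresolved obstacles rather than carrying them out. Second, and more seriously, the claim that an appropriate exponent plus routine simplification ``collapses'' the Chern inequality to the stated bound is not right for the construction you describe: the classical Kummer-cover-plus-BMY computation, under the hypothesis $t_n=t_{n-1}=0$, yields the weaker coefficient $(r-4)$, i.e. $t_2+\frac{3}{4}t_3 \geq n+\sum_{r\geq 5}(r-4)\,t_r$. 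The version stated here has coefficient $(2r-9)$, which is strictly larger for every $r\geq 6$, and correspondingly needs the extra hypothesis $t_{n-2}=0$ (with $t_{n-2}\neq 0$ allowed, the $(2r-9)$ bound is simply false for large $n$). The known proofs of this stronger form require a sharper input than plain BMY applied to the cover, namely a logarithmic/orbifold Miyaoka--Yau inequality in the style of Langer; in the algebraic-geometry literature this refinement is usually credited to Bojanowski, even though combinatorics papers (this one included) cite it under Hirzebruch's name. So even if you completed all the deferred computations, the route as outlined should be expected to terminate at the $(r-4)$ inequality; closing the distance to $(2r-9)$ is a missing idea, not a routine translation.
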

More recently, Bojanowski~\cite{bojanowski03} and Pokora~\cite{pokora16} used a theorem of Langer~\cite{langer03} to prove the following result. 
\begin{theorem}[Langer's Inequality]\label{th:langer}
Let $\V$ be a set of $n$ points in $\CC^2$, such that $t_j(\V) = 0$ for $j > 2n/3$.  Then
$$ t_2(\V) + \frac{3}{4}t_3(\V) \geq n + \sum_{r \geq 5} \frac{r^2 - 4r}{4} t_r(\V). $$
\end{theorem}
While imposing a stricter condition on the point set, Theorem~\ref{th:langer} is strictly better than Theorem~\ref{th:Hirzebruch} as long as there is a line containing at least 5 points. For more detail on Langer's inequality and combinatorial implications, see~\cite{dezeeuw18b}. In $\CC^3$, when $\V$ is sufficiently non-degenerate, i.e., no plane contains too many points, we are able to give a more refined bound by taking into account the existence of lines with more than three points. In particular, we show the following (the constant $1/2$ in Theorem~\ref{th:main} is arbitrary and can be replaced by any positive constant smaller than 1):
\begin{theorem}\label{th:main}
There exists an absolute constant $c > 0$ and a positive integer $n_0$ such that the following holds. Let  $\V$ be a set of $n \geq n_0$ points in $\CC^3$ with at most $\frac{1}{2} n$ points contained in any plane. Then
\[ t_2(\V) \geq \frac{3}{2}n + c \sum_{r \geq 4} r^2 t_r(\V). \]
\end{theorem}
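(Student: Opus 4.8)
The plan is to reduce to a planar incidence problem, extract the main term from the bound we already have, and then argue that every line with four or more points is forced to create a quadratic surplus of ordinary lines. First I would pass to the plane by a generic linear projection $\pi\colon\CC^3\to\CC^2$. Since $\V$ is finite, only finitely many projection directions can identify two points of $\V$ or make three non-collinear points of $\V$ collinear (namely the directions lying in one of the finitely many planes spanned by pairs or triples of points). For a generic direction none of these degeneracies occurs, so $\pi$ is injective on $\V$ and preserves collinearity exactly; hence $t_r(\pi(\V))=t_r(\V)$ for every $r$, no line of $\pi(\V)$ carries more than $\frac23 n$ points, and for $n\ge n_0$ both Melchior's and Hirzebruch's inequalities (Theorems~\ref{th:Melchior} and~\ref{th:Hirzebruch}) apply to $\pi(\V)$. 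Since at most $\frac23 n<n-1$ points lie on any plane, Theorem~\ref{th:3n/2} already yields $t_2\ge\frac32 n$, which is exactly the main term, so it remains only to produce an additional $c\sum_{r\ge4} r^2 t_r$ ordinary lines.

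For a fixed line $\ell\in\L_{\geq 4}$ with $r$ points I would work inside the planes through $\ell$. Every point of $\V\setminus\ell$ lies in a unique such plane, so the off-line points are partitioned into planes $P_1,P_2,\dots$ with $a_j:=|(\V\cap P_j)\setminus\ell|$ and $\sum_j a_j=n-r$. A line contained in $P_j$ and distinct from $\ell$ meets $\ell$ in at most one point and contains only points of $\V\cap P_j$, so any line ordinary inside $P_j$ is genuinely ordinary in $\V$, and two such lines coming from distinct $P_j$ can coincide only along $\ell$; hence these contributions are distinct across the planes through $\ell$. Applying Melchior's inequality inside each occupied plane gives $t_2(P_j)\ge 3+(r-3)=r$ ordinary lines from $\ell$ alone, so if the off-line points are spread over at least $\Omega(r)$ planes we already obtain $\Omega(r^2)$ ordinary lines. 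If instead the points concentrate in very few planes through $\ell$ (possible since the hypothesis only forbids more than $\frac23 n$ points per plane), then at least two of those planes are substantial, and the $\Omega(n^2)$ lines transversal to them, joining points in different dense planes and not meeting $\ell$, are overwhelmingly ordinary and dominate $r^2$. A direct spoke count bounding the lines through one point of $\ell$ and exactly one off-line point by $a_j(r-a_j+1)$ handles the intermediate regime. In all cases each $\ell\in\L_{\geq 4}$ forces $\Omega(r^2)$ ordinary lines of $\V$.

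The remaining, and genuinely hardest, step is to combine these per-line surpluses over all of $\L_{\geq 4}$ without overcounting: a single ordinary line $xy$ gets charged to every rich line through $x$ or through $y$, so a naive summation loses a factor equal to the maximal number of rich lines through a point, which is not bounded by a constant. Upgrading the \emph{linear} Hirzebruch weight $(2r-9)$ to the required \emph{quadratic} weight $r^2$ while paying only a constant for this overlap is the crux of the matter. I expect to resolve it through the rank/design-matrix machinery of~\cite{dsw14}: a line with $r$ collinear points contributes $\Theta(r^2)$ linear dependencies among the projective coordinates of $\V$, the ambient rank is at most $4$, and this mismatch converts directly into the $r^2 t_r$ surplus, with the constant $c$ fixed at the end by balancing the surplus against the $\frac32 n$ main term.
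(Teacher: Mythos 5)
Your argument has a fatal flaw at its core: you invoke Melchior's inequality (Theorem~\ref{th:Melchior}) for point sets in $\CC^2$, but that theorem is a statement about $\RR^2$ and is false over the complex numbers --- the Fermat configuration (Example~\ref{eg:fermat-example}) is a planar complex set with $t_2 = 0$, and indeed the failure of Sylvester--Gallai over $\CC^2$ is the entire reason this paper exists. A generic projection keeps the points complex, so Melchior applies neither to $\pi(\V)$ nor, more damagingly, inside the planes $P_j$ through a rich line $\ell$: a complex plane containing $\ell$ and some off-line points may determine no ordinary lines at all within itself, so your per-line bound $t_2(P_j)\ge r$ and hence the claimed $\Omega(r^2)$ surplus per rich line is unproven. (Hirzebruch's inequality, Theorem~\ref{th:Hirzebruch}, is valid over $\CC$ but needs $t_N=t_{N-1}=t_{N-2}=0$ for $N=|\V\cap P_j|$, which fails precisely for the typical planes in your partition, namely $\ell$ plus one or two extra points.) There is also a structural problem with your reduction: proving $t_2\ge\frac32 n$ (via Theorem~\ref{th:3n/2}) and \emph{separately} proving $t_2\gtrsim\sum_{r\ge4}r^2t_r$ does not yield $t_2\ge\frac32 n + c\sum_{r\ge4}r^2t_r$; in the regime $\sum_{r\ge4}r^2t_r=\Theta(n)$ the maximum of the two bounds gives no surplus above $\frac32 n$, so the theorem genuinely requires a single argument producing the surplus on top of the main term.

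Your final paragraph concedes the combination step and defers it to ``the rank/design-matrix machinery of~\cite{dsw14},'' but that machinery, used as a black box, is exactly what yields only the $\frac32 n$-type bound (this is Lemma~\ref{th:3n/2 Prop S} of the paper); the ambient rank is $4$ whether or not rich lines exist, so there is no ``mismatch'' that converts into an $r^2t_r$ surplus. The actual content of the paper's proof is a strengthening of the diagonal-dominance estimate itself: for each special line one chooses the collinear triples via (self-orthogonal) Latin squares and a probabilistic argument (Lemma~\ref{le:dmatrix-line}) so that a constant fraction of rows pair up with rows sharing two support columns whose co-factors differ in angle by at least $\pi/3$ (Lemma~\ref{le:coefficient-angle}); this forces cancellation in the off-diagonal entries of $A^*A$ quantified by Lemma~\ref{le:squares-bound}, which is what upgrades the rank bound to carry the quadratic weight (Lemma~\ref{th:gap}). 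This must then be run through the Property-$S$ dichotomy (Lemma~\ref{le:not-propertys}) and an iterative pruning argument controlled by Lemma~\ref{le:sumVprime}, since the scaling theorem may simply fail to apply. None of these ingredients appears in, or follows from, your sketch, so what you label the ``remaining step'' is in fact the whole theorem.
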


Suppose that $\V$ consists of $n - k$ points on a plane, and $k$ points not on the plane. There are at least $n - k$ lines incident to each point not on the plane, at most $k - 1$ of which could contain three or more points. So we get that there are at least $k(n - 2k)$ ordinary lines determined by $\V$. Then if $k = \epsilon n$, for $0 < \epsilon < 1/2$, we get that $\V$ has $\Omega_{\epsilon}(n^2)$ ordinary lines, where the hidden constant depends on $\epsilon$. Therefore, the bound in Theorem~\ref{th:main} is only interesting when no plane contains too many points.

On the other hand, having at most a constant fraction of the points on any plane is necessary to obtain a bound as in Theorem~\ref{th:main}. Indeed, let $\V$ consist of the Fermat Configuration for some $k \geq 3$ on a plane and $o(k)$ points not on the plane. Then $\V$ has $O(k)$ points and determines $o(k^2)$ ordinary lines. On the other hand, $\sum_{r \geq 4} r^2 t_r(\V) = \Omega(k^2)$.

Both Hirzebruch's and Langer's inequalities (which also give bounds in $\CC^3$, though without requiring that every plane contains not too many points) only give lower bounds on $t_2(\V) + \frac{3}{4}t_3(\V)$, whereas both Theorems~\ref{th:3n/2} and \ref{th:main} give lower bounds on the number of ordinary lines, i.e., $t_2(\V)$. We also note that lines with four points do not play any role in the previous inequalities, where the summation starts at $r = 5$. This is not the case for Theorem~\ref{th:main}, which also takes into account lines with four points.

Finally, when a point set $\V$ spans four or more dimensions in a sufficiently non-degenerate manner, i.e., no three dimensional affine subspace contains too many points, we prove that there must be a quadratic number of ordinary lines.
\begin{theorem}\label{th:higherdim}
There exists an absolute constant $c^\prime > 0$ and a positive integer $n_0$ such that the following holds. Let  $\V$ be a set of $n \geq n_0$ points in $\CC^4$ with at most $\frac{1}{2} n$ points contained in any three dimensional affine subspace. Then
\[ t_2(\V) \geq c^\prime n^2. \]
\end{theorem}
Here, again, the constant $1/2$ is arbitrary and can be replaced by any positive constant less than 1.  However, increasing this constant will shrink the constant $c^\prime$ in front of $n^2$.

While we state Theorems~\ref{th:3n/2} and \ref{th:main} over $\CC^3$ and Theorem \ref{th:higherdim} over $\CC^4$, the same bounds hold in higher dimensions as well, since projecting a point set in $\CC^d$ onto a generic lower dimensional subspace can only increase the number of incidences. A quadratic lower bound may also be possible under the weaker hypothesis of at most $\frac{1}{2}n$ points are contained in any two dimensional space, but we have no proof or counterexample.  A recent result of de Zeeuw~\cite{dezeeuw2018a} resolves this question for points over the reals.
\begin{theorem}
For every $\alpha < 1$ there is a $c_\alpha > 0$ such that, if a set $P$ of $n$ points in $\RR^3$
has at most $\alpha n$ points on any plane, then $P$ spans at least $c_\alpha n^2$ ordinary lines.
\end{theorem}

\paragraph{Organization:} In Section~\ref{sec:overview} we give a short overview of the new ideas in our proof (which builds upon \cite{dsw14}). In Section~\ref{sec:prelims} we develop the necessary machinery on matrix scaling and Latin squares. In Section~\ref{sec:dependencymatrixprelim}, we prove some key lemmas that will be used in the proofs of our main results. Section~\ref{sec:3n/2} gives the proof of Theorems~\ref{th:3n/2} and \ref{th:higherdim}, which are considerably simpler than Theorem~\ref{th:main}.  In Section~\ref{sec:dependencymatrix}, we develop additional machinery needed for the proof of Theorem~\ref{th:main}. The proof of Theorem~\ref{th:main} is presented in Section~\ref{sec:main}.

\section{Proof overview}\label{sec:overview}

The starting point for the proofs of Theorems~\ref{th:3n/2}, \ref{th:main} and \ref{th:higherdim} is the method developed in \cite{bdwy13,dsw14} which uses rank bounds for {\em design matrices} -- matrices in which the supports of different columns do not intersect in too many positions. We augment the techniques in these papers in several ways which give us more flexibility in analyzing the number of ordinary lines. We devote this short section to an overview of the general framework (starting with \cite{dsw14}) outlining the places where new ideas come into play.

Let $\V = \{v_1, \dots, v_n\}$ be  points in $\CC^d$ and denote by $V$  the $n \times (d+1)$ matrix whose $i^{th}$ row is the vector $(v_i, 1) \in \CC^{d+1}$, i.e., the vector obtained by appending a 1 to the vector $v_i$. The dimension of the (affine) space spanned by the point set can be seen to be equal to $\rank(V) - 1$. We would now like to argue that too many collinearities in $\V$ (or too few ordinary lines) imply that all (or almost all) points of $\V$ must be contained in a low dimensional affine subspace, i.e., $\rank(V)$ is small. To do this, we construct a matrix $A$, encoding the dependencies in $\V$, such that $AV = 0$. Then we must have \[ \rank (V) \leq n - \rank(A), \] and so it suffices to lower bound the rank of $A$.

We construct the matrix $A$ in the following manner so that each row of $A$ corresponds to a collinear triple in $\V$.  For any collinear triple  $\{ v_i, v_j, v_k \}$, there exist coefficients $a_i, a_j, a_k$ such that $a_i v_i + a_j v_j + a_k v_k = 0$. We can thus form a row of $A$ by taking these coefficients as the nonzero entries in the appropriate columns. By carefully selecting the triples using constructions of Latin squares (see Lemma~\ref{le:triplesystem}), we can ensure that $A$ is a {\em design matrix}. Roughly speaking, this means that the supports of every two columns in $A$ intersect in a small number of positions. Equivalently, every pair of points appears together only in a small number of triples. 

The proof in \cite{dsw14} now proceeds to prove a general rank lower bound on any such design matrix. To understand the new ideas in our proof, we need to ``open the box'' and see how the rank bound from \cite{dsw14} is actually proved. To get some intuition, suppose that $A$ is a matrix with 0/1 entries. To bound the rank of $A$, we can consider the matrix $M = A^{*}A$ and note that $\rank(M) = \rank(A)$. Since $A$ is a design matrix, $M$ has the property that the diagonal entries are very large (since we can show that each point is in many collinear triples) and  that the off-diagonal elements are very small (since columns have small intersections). Matrices with this property are called {\em diagonal dominant} matrices, and it is easy to lower bound their rank using trace inequalities (see Lemma~\ref{le:rankbound}). 

However, the matrix $A$ that we construct could have  entries of arbitrary magnitude  and so bounding the rank requires more work. To do this, \cite{dsw14} relies on {\em matrix scaling} techniques. We are allowed to multiply each row and each column of $A$ by a nonzero scalar and would like to reduce to the case where the entries of $A$ are ``mostly balanced'' (see Theorem~\ref{th:realscaling} and Corollary~\ref{co:complexscaling}). Once scaled, we can consider $M = A^*A$ as before and use the bound for diagonal dominant matrices.

Our proof introduces two new main ideas into this picture. The first idea has to do with the conditions needed to scale $A$. It is known (see Corollary~\ref{co:complexscaling}) that a matrix  $A$ has a good scaling if it does not contain a ``too large'' zero submatrix. This is referred to as having  Property-$S$ (see Definition~\ref{def:propertys}). The proof of \cite{dsw14} uses $A$ to construct a new matrix $B$, whose rows are the same as those of  $A$ but with some rows repeating more than once. Then one shows that $B$ has Property-$S$ and continues to scale $B$ (which has rank at most that of $A$) instead of $A$. This loses the control on the exact number of rows in $A$ which is crucial for bounding the number of ordinary lines. We instead perform a more careful case analysis: If $A$ has Property-$S$ then we scale $A$ directly and gain more information about the number of ordinary lines. If $A$ does not have Property-$S$, then we carefully examine the large zero submatrix that violates Property-$S$. Such a zero submatrix corresponds to a set of points and a set of lines such that no line is incident to any of the points. We argue in Lemma~\ref{le:not-propertys} that such a submatrix implies the existence of many ordinary lines. In fact, the conclusion is slightly more delicate: We either get many ordinary lines (in which case we are done) or we get a point with many ordinary lines incident to it (but not enough to complete the proof). In the second case, we need to perform an iterative argument which removes the point we found and applies the same argument again to the remaining points.

The second new ingredient in our proof comes into play only in the proof of Theorem~\ref{th:main}. Here, our goal is to improve on the rank bound of \cite{dsw14} using the existence of lines with four or more points. Recall that our goal is to give a good upper bound on the off-diagonal entries of $M = A^*A$. Consider the $(i,j)^{th}$ entry of $M$, obtained by taking the inner product of columns $i$ and $j$ in $A$. The $i^{th}$ column of $A$ contains the coefficients of $v_i$ in a set of  collinear triples containing $v_i$ (we might not use all collinear triples). In \cite{dsw14} this inner product is bounded using the Cauchy-Schwartz inequality, and uses the fact that we picked our triple family carefully so that $v_i$ and $v_j$ appear together in a small number of collinear triples. This does not use any information about possible cancellations that may occur in the inner product (considering different signs over the reals or angles of complex numbers). One of the key insights of our proof is to notice that having more than three points on a line, gives rise to  such cancellations (which increase the more points we have on a single line).  

To get a rough idea, let us focus on a set of points over the reals. Consider two points $v_1,v_2$ on a line that has two more points $v_3,v_4$ on it. Suppose that $v_3$ is `between' $v_1$ and $v_2$ and that $v_4$ is outside the interval $v_1,v_2$. Then, in the collinearity equation for the triple $v_1,v_2,v_3$ the signs of the coefficients of $v_1,v_2$ will both have the same sign. On the other hand, in the collinearity equation for $v_1,v_2,v_4$ the signs of the coefficients of $v_1,v_2$ will be different (one will be positive and the other negative). Thus, if both of these triples appear as rows of $A$, we will have non trivial cancellations! Of course, we need to also worry about the magnitudes of the coefficients but, luckily, this is possible since, if the coefficients are of magnitudes that differ from each other too much, we can ``win'' in another Cauchy-Schwartz (which again translates into a better rank bound, see Lemma~\ref{le:squares-bound}). To formalize the previous example, let $v_1, v_2, v_3, v_4$ be collinear points in $\RR^d$. Then there exist coefficients such that
\begin{align*}
& r \cdot v_1 + (1 - r) \cdot v_2 - v_3 = 0,\\
& s \cdot v_1 + (1 - s) \cdot v_2 - v_4  = 0,\\
\mbox{and} \quad & t \cdot v_1 + (1 - t) \cdot v_3 - v_4 = 0.
\end{align*}
Now at least one of $r(1-r)$, $s(1-s)$ and $t(1-t)$ must be negative, and at least one must be positive. Without loss of generality, say $r(1-r)$ is positive and $s(1-s)$ is negative. In order for the Cauchy-Schwarz inequality to be tight, we need that $r(1-r) = s(1-s),$ which cannot happen because one is positive and the other is negative.  This phenomena is captured in Lemma~\ref{le:coefficient-angle} which generalizes this idea to the complex numbers. The lemma only analyzes the case of four points since we can bootstrap the lemma for lines with more points by applying it to a random four tuple (see Item 4 of Lemma~\ref{le:dmatrix-line}).


\section{Preliminaries}\label{sec:prelims}


\subsection{Matrix Scaling and Rank Bounds}

One of the main ingredients in our proof is rank bounds for design matrices. These techniques were first used for incidence type problems in \cite{bdwy13} and improved upon in \cite{dsw14}. We first set up some notation. For a complex matrix $A$, let $A^*$ denote the matrix conjugated and transposed. Let $A_{ij}$ denote the entry in the $i^{th}$ row and $j^{th}$ column of $A$,  and $R_k := R_k(A)$ and $C_i := C_i(A)$ denote the rows and column, respectively, of $A$. For two complex vectors $u, v \in \CC^d$, we denote their inner product by $\langle u, v\rangle = \sum_{i = 1}^{d} u_i \cdot \overline{v_i}$. The support of a vector $u \in \FF^n$ is defined to be $\supp(u) = \{i \in [n]: u_i \neq 0 \}$.

Central to obtaining the rank bounds for matrices is the notion of matrix scaling. We now introduce this notion and provide some definitions and lemmas.
\begin{definition}[Matrix Scaling]\label{def:matrixscaling}
Let $A$ be an $m \times n$ matrix over some field $\FF$. For every $\rho~\in~\FF^m$ and $\gamma~\in~\FF^n$ with all entries nonzero, the matrix $A^\prime$ with $A^\prime_{ij} = A_{ij}\cdot\rho_i\cdot\gamma_j$ is referred to as a {\em scaling} of $A$. We refer to the elements of the vectors $\rho$ and $\gamma$ as the {\em scaling coefficients}. Note that two matrices that are scalings of each other have the same rank and the same pattern of zero and non-zero entries.
\end{definition}

We will be interested in scalings of matrices that control the row and column sums. The following property provides a sufficient condition under which such scalings exist.
\begin{definition}[Property-$S$]\label{def:propertys}
Let $A$ be an $m \times n$ matrix over some field. We say that $A$ satisfies Property-$S$ if for every zero submatrix of size $a \times b$, we have \[ \frac{a}{m} + \frac{b}{n} \leq 1. \]
\end{definition}

The following theorem is given in  \cite{rs89}.
\begin{theorem}[Matrix Scaling theorem]
\label{th:realscaling}
Let $A$ be an $m \times n$ real matrix with non-negative entries satisfying Property-$S$. Then, for every $\epsilon > 0$, there exists a scaling $A^\prime$ of $A$ such that the sum of every row of $A^\prime$ is at most $1 + \epsilon$, and the sum of every column of $A^\prime$ is at least $m/n - \epsilon$. Moreover, the scaling coefficients are all positive real numbers.
\end{theorem}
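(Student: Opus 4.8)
The plan is to recast the existence of a good scaling in terms of a convex potential and to read off the row- and column-sum bounds from approximate stationarity of that potential. Writing $\rho_i = e^{x_i}$ and $\gamma_j = e^{y_j}$ (which forces the scaling coefficients to be positive, as required), a scaling of $A$ is parametrized by $(x,y) \in \RR^m \times \RR^n$, and the entries of the scaled matrix are $A_{ij}e^{x_i+y_j}$. Consider the potential
\[ \Phi(x,y) = \sum_{i,j} A_{ij}\,e^{x_i+y_j} - \sum_{i=1}^m x_i - \frac{m}{n}\sum_{j=1}^n y_j . \]
Since $A_{ij}\ge 0$, $\Phi$ is a finite, smooth, convex function on $\RR^{m+n}$, and its partial derivatives are $\partial\Phi/\partial x_i = (\text{$i$-th row sum of the scaled matrix}) - 1$ and $\partial\Phi/\partial y_j = (\text{$j$-th column sum}) - m/n$. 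Hence any point where $\|\nabla\Phi\| \le \epsilon$ yields a scaling in which every row sum lies within $\epsilon$ of $1$ and every column sum within $\epsilon$ of $m/n$; in particular the (one-sided) conclusion of the theorem holds. So it suffices to show that $\nabla\Phi$ takes values of arbitrarily small norm.

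I would reduce this to a statement about the asymptotic behaviour of $\Phi$. If no point had $\|\nabla\Phi\| < \epsilon$, then $0$ could be separated from the (convex) closure of the range of $\nabla\Phi$ by a hyperplane; letting $w=(u,v)$ be its normal, $\langle \nabla\Phi(z), w\rangle$ would be bounded below by a positive constant over all $z$, so $\Phi$ would decrease at a linear rate along the ray $z_0 - tw$. Thus it is enough to prove that the asymptotic slope of $\Phi$ is nonnegative in every direction $(u,v)$, i.e. that $\lim_{t\to\infty}\Phi(tu,tv)/t \ge 0$. Inspecting $\Phi(tu,tv)$, if $u_i+v_j>0$ for some $(i,j)$ in the support of $A$ the exponential term forces the slope to be $+\infty$; otherwise $u_i+v_j \le 0$ throughout the support, the exponential terms stay bounded, and the slope equals $-\bigl(\sum_i u_i + \frac{m}{n}\sum_j v_j\bigr)$. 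So the whole theorem comes down to the inequality
\[ \sum_i u_i + \frac{m}{n}\sum_j v_j \le 0 \qquad\text{whenever } u_i+v_j\le 0 \text{ on the support of } A. \]

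This is where Property-$S$ (Definition~\ref{def:propertys}) enters, and it is the heart of the argument. Given such a direction, for each threshold $c\in\RR$ set $I_c=\{i: u_i>c\}$ and $J_c=\{j: v_j\ge -c\}$. If $i\in I_c$ and $j\in J_c$ then $u_i+v_j>0$, which by hypothesis forces $A_{ij}=0$; hence $I_c\times J_c$ indexes a zero submatrix, and Property-$S$ gives $|I_c| + \frac{m}{n}|J_c| \le m$ for every $c$. A routine layer-cake computation shows
\[ \int_{-\infty}^{\infty}\Bigl(|I_c| + \tfrac{m}{n}|J_c| - m\Bigr)\,dc = \sum_i u_i + \frac{m}{n}\sum_j v_j , \]
the integrand having bounded support in $c$ so that the integral converges. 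Since the integrand is $\le 0$ pointwise by the previous line, the right-hand side is $\le 0$, which is exactly the desired inequality. This proves that the asymptotic slope of $\Phi$ is nonnegative in every direction, and the reduction above then produces, for each $\epsilon>0$, a point with $\|\nabla\Phi\|\le\epsilon$ and hence the claimed scaling.

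The main obstacle is precisely this last combinatorial step: translating Property-$S$ into the recession (asymptotic-slope) inequality for $\Phi$. The level-set decomposition is what converts the single structural hypothesis ``no large zero block'' into a clean bound that holds simultaneously at every threshold $c$, and the layer-cake integration is what packages these per-threshold bounds into control of the linear part of $\Phi$. By contrast, the passage from nonnegative recession to small-gradient points, and from small-gradient points to the stated row- and column-sum bounds, is soft. I note also that this argument delivers two-sided control (row sums within $\epsilon$ of $1$ and column sums within $\epsilon$ of $m/n$), which is stronger than the one-sided bounds stated; an exact scaling ($\epsilon=0$) need not exist, which is why the approximate formulation is the natural one.
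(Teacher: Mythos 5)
Your proof is correct, but there is nothing in the paper to compare it against: the paper does not prove Theorem~\ref{th:realscaling} at all, it simply imports it from~\cite{rs89}. What you have written is essentially the convex-programming argument that underlies the matrix-scaling literature: approximate scalability is recast as the absence of a direction of linear decrease for the potential $\Phi$, and Property-$S$ (Definition~\ref{def:propertys}) is exactly the combinatorial content of that recession condition. I checked the step you correctly identify as the heart of the argument. For $c$ outside a bounded interval the integrand $|I_c|+\tfrac{m}{n}|J_c|-m$ vanishes, and over a window $[-R,R]$ containing all thresholds the three terms integrate to $\sum_i u_i + mR$, $\tfrac{m}{n}\sum_j v_j + mR$, and $-2mR$, so the integral is indeed $\sum_i u_i + \tfrac{m}{n}\sum_j v_j$; moreover the pointwise bound $|I_c|+\tfrac{m}{n}|J_c|\le m$ from Property-$S$ holds trivially when $I_c$ or $J_c$ is empty, so the inequality is valid for every $c$ and the key claim follows. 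Your concluding remarks are also accurate: the argument gives two-sided control, and an exact scaling need not exist (already for the $2\times 2$ upper-triangular $0/1$ matrix, which satisfies Property-$S$ with equality), so the $\epsilon$-formulation is the right one.

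One technical point deserves flagging. Your separation step uses, without proof, that the closure of the range of $\nabla\Phi$ is convex; this is true for a finite differentiable convex function (it is the closure of $\mathrm{dom}\,\partial\Phi^*$, hence of the convex set $\mathrm{dom}\,\Phi^*$), but it is a genuine theorem of convex analysis rather than a triviality. If you prefer to avoid it, note that once the recession slope of $\Phi$ is known to be nonnegative in every direction, the perturbed function $\Phi(z)+\epsilon\|z\|$ has recession slope at least $\epsilon\|d\|>0$ in every direction $d\neq 0$, hence has bounded sublevel sets and attains its minimum at some $z^*$; then $0\in\{\nabla\Phi(z^*)\}+\epsilon\,\partial\|\cdot\|(z^*)$ gives $\|\nabla\Phi(z^*)\|\le\epsilon$ directly, with no appeal to properties of the gradient range.
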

In fact, in the above theorem, we may assume that the sum of every row of $A^\prime$ is exactly $1 + \epsilon$, since scaling the rows to make the row sums $1 + \epsilon$ can only increase the column sums.

The following Corollary to Theorem~\ref{th:realscaling} appeared in \cite{bdwy13}.
\begin{corollary}[$\ell_2$ scaling]
\label{co:complexscaling}
Let $A$ be an $m \times n$ complex matrix satisfying Property-$S$. Then, for every $\epsilon > 0$, there exists a scaling $A^\prime$ of $A$ such that for every $i \in [m]$
\[ \sum_{j \in [n]} \left|A^\prime_{ij}\right|^2 \leq 1 + \epsilon, \]
and for every $j \in [n]$
\[ \sum_{i \in [m]} \left|A^\prime_{ij}\right|^2 \geq \frac{m}{n} - \epsilon. \]
Moreover, the scaling coefficients are all positive real numbers.
\end{corollary}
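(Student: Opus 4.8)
The plan is to reduce the complex $\ell_2$ statement to the real row/column-sum statement of Theorem~\ref{th:realscaling} by passing to the matrix of squared magnitudes. First I would define the $m \times n$ real non-negative matrix $B$ by $B_{ij} = |A_{ij}|^2$. The crucial observation is that $B$ and $A$ have exactly the same zero pattern, since $B_{ij} = 0$ if and only if $A_{ij} = 0$; because Property-$S$ is a condition only on the locations of zero submatrices, $B$ inherits Property-$S$ from $A$. Thus Theorem~\ref{th:realscaling} applies to $B$.

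Next I would apply Theorem~\ref{th:realscaling} to $B$ with the given $\epsilon$, obtaining positive real scaling coefficients $\rho \in \RR^m$ and $\gamma \in \RR^n$ such that the scaling $B'_{ij} = B_{ij}\rho_i\gamma_j$ has every row sum at most $1 + \epsilon$ and every column sum at least $m/n - \epsilon$.

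Finally I would lift this scaling back to $A$. Since each $\rho_i$ and $\gamma_j$ is a positive real, $\sqrt{\rho_i}$ and $\sqrt{\gamma_j}$ are well-defined positive reals, so setting $A'_{ij} = A_{ij}\sqrt{\rho_i}\sqrt{\gamma_j}$ gives a legitimate scaling of $A$ with positive real coefficients. By construction $|A'_{ij}|^2 = |A_{ij}|^2\rho_i\gamma_j = B'_{ij}$, so the squared-magnitude row sum $\sum_{j}|A'_{ij}|^2 = \sum_{j}B'_{ij}$ is exactly the corresponding row sum of $B'$, and likewise each column sum $\sum_{i}|A'_{ij}|^2 = \sum_{i}B'_{ij}$ equals that of $B'$. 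The two desired inequalities then follow immediately from the row/column-sum bounds on $B'$.

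There is essentially no hard step here: the only thing one must check carefully is that the reduction preserves Property-$S$, which is immediate because squaring magnitudes does not change which entries vanish, together with the fact that Theorem~\ref{th:realscaling} guarantees \emph{positive} coefficients so that taking square roots keeps the lifted coefficients real and positive. I expect the verification of the two inequalities to be the only remaining content, and it is mechanical once the squared-magnitude matrix $B$ and the square-root coefficients are in place.
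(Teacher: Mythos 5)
Your proof is correct and follows exactly the paper's own argument: the paper likewise obtains Corollary~\ref{co:complexscaling} by applying Theorem~\ref{th:realscaling} to the matrix of squared absolute values and lifting the resulting positive scaling back via square roots. Your write-up simply makes explicit the details (preservation of Property-$S$ and the square-root lift) that the paper leaves to the reader.
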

Corollary~\ref{co:complexscaling} is obtained by applying Theorem~\ref{th:realscaling} to the matrix obtained by squaring the absolute values of the entries of the matrix $A$. Once again, we may assume that $\sum_{j \in [n]} \left|A^\prime_{ij}\right|^2 = 1 + \epsilon$.

To bound the rank of a matrix $A$, we will bound the rank of the matrix $M = A^{\prime*}A^\prime$, where $A^\prime$ is some scaling of $A$. Then we have that $\rank(A) = \rank(A^\prime) = \rank(M)$. We use Corollary~\ref{co:complexscaling}, along with rank bounds for diagonal dominant matrices. The following lemma is a variant of a folklore lemma on the rank of diagonal dominant matrices (see~\cite{alon09}) and appeared in this form in~\cite{dsw14}. 
\begin{lemma}
\label{le:rankbound}
Let $A$ be an $n \times n$ complex hermitian matrix, such that $|A_{ii}| \geq L$ for all $i \in n$. Then
\[ \rank(A) \geq \frac{n^2L^2}{nL^2 + \sum_{i \neq j}|A_{ij}|^2}. \]
\end{lemma}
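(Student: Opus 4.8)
The plan is to derive the bound from the elementary \emph{trace inequality} for the rank of a Hermitian matrix, applied not to $A$ directly but to a diagonally rescaled copy of it. The underlying tool is the following: if $B$ is any $n\times n$ Hermitian matrix with (real) eigenvalues $\mu_1,\dots,\mu_n$ and $r=\rank(B)$ nonzero eigenvalues, then Cauchy--Schwarz over the nonzero eigenvalues gives $(\mathrm{tr}\,B)^2=(\sum_{\mu_k\neq 0}\mu_k)^2\le r\cdot\sum_{\mu_k\neq 0}\mu_k^2=r\cdot\mathrm{tr}(B^2)$. Since $B$ is Hermitian one has $\mathrm{tr}(B^2)=\sum_{i,k}B_{ik}B_{ki}=\sum_{i,k}|B_{ik}|^2$, so $\rank(B)\ge (\mathrm{tr}\,B)^2 / \big(\sum_{i,k}|B_{ik}|^2\big)$. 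Applying this to $A$ itself is too weak: the diagonal entries may exceed $L$ by an arbitrary amount, which inflates $\sum_i|A_{ii}|^2$ in the denominator faster than it helps the numerator, and a small $2\times 2$ example already shows the resulting estimate can fall below the claimed value.

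To repair this I would first normalize the diagonal by a congruence, which preserves both the rank and the Hermitian property. In the situation of interest $A$ arises as a Gram matrix $A'^{*}A'$ and is positive semidefinite, so its diagonal entries are positive (and each is $\ge L$). Set $D=\mathrm{diag}(A_{11}^{-1/2},\dots,A_{nn}^{-1/2})$ and put $\tilde A=DAD$. Then $\tilde A$ is Hermitian, $\rank(\tilde A)=\rank(A)$, every diagonal entry equals $\tilde A_{ii}=1$, and $\tilde A_{ij}=A_{ij}/\sqrt{A_{ii}A_{jj}}$ for $i\neq j$.

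Now the trace inequality is applied to $\tilde A$, where it is no longer wasteful: $\mathrm{tr}(\tilde A)=n$ and $\sum_{i,k}|\tilde A_{ik}|^2=n+\sum_{i\neq j}|\tilde A_{ij}|^2$, giving $\rank(A)=\rank(\tilde A)\ge n^2 / \big(n+\sum_{i\neq j}|\tilde A_{ij}|^2\big)$. The final step is to feed in the hypothesis in the form $A_{ii}A_{jj}\ge L^2$, so that $|\tilde A_{ij}|^2=|A_{ij}|^2/(A_{ii}A_{jj})\le |A_{ij}|^2/L^2$ and hence $\sum_{i\neq j}|\tilde A_{ij}|^2\le L^{-2}\sum_{i\neq j}|A_{ij}|^2$. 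Substituting this bound and simplifying yields exactly $\rank(A)\ge n^2L^2 / \big(nL^2+\sum_{i\neq j}|A_{ij}|^2\big)$.

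The step I expect to require the most care is the normalization itself. The congruence by $D$ behaves as described precisely because the diagonal entries are positive; were they merely of absolute value $\ge L$ but of mixed sign, the normalized matrix would have diagonal entries $\pm1$, $\mathrm{tr}(\tilde A)$ could be small, and the trace inequality would no longer deliver the bound. The resolution is to record that the lemma is invoked only for the positive semidefinite matrices $M=A'^{*}A'$, whose diagonals are nonnegative, so that the reduction to unit diagonal is legitimate; this is the one place where the hypothesis must be read in conjunction with the way the lemma is used.
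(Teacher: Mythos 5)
Your normalization by $D=\mathrm{diag}(A_{11}^{-1/2},\dots,A_{nn}^{-1/2})$ and the trace inequality are exactly the right tools, and the restricted statement you prove --- for Hermitian matrices with \emph{positive} diagonal, in particular for the Gram matrices $M=A'^{*}A'$ to which the paper applies the lemma --- is correct. (Note that the paper itself gives no proof of this lemma; it quotes it from \cite{dsw14}, so the comparison is with that standard argument.) However, the lemma as stated is about an arbitrary complex Hermitian matrix with $|A_{ii}|\geq L$, and your proposal does not prove that statement: you identify the mixed-sign difficulty and then resolve it by shrinking the hypothesis to the positive semidefinite case rather than by handling it. That is a genuine gap, because the lemma is true in the stated generality, and it admits a proof that is only one step away from yours.

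The missing idea is to replace the trace by the sum of the absolute values of the eigenvalues. With $\tilde A = DAD$, $D=\mathrm{diag}(|A_{ii}|^{-1/2})$ (legitimate for any Hermitian matrix with nonzero diagonal), one has $\tilde A_{ii}=A_{ii}/|A_{ii}|=\pm 1$, so $\sum_i |\tilde A_{ii}|=n$ even when the signs are mixed. Writing the spectral decomposition $\tilde A_{ii}=\sum_k \lambda_k |u_k(i)|^2$ with orthonormal eigenvectors $u_k$, the triangle inequality gives $|\tilde A_{ii}|\leq \sum_k |\lambda_k|\,|u_k(i)|^2$, and summing over $i$ yields $n\leq \sum_k |\lambda_k|$, where the sum runs over the nonzero eigenvalues, of which there are $r=\rank(\tilde A)=\rank(A)$. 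Cauchy--Schwarz over these eigenvalues then gives $n^2 \leq r\sum_k \lambda_k^2 = r\,\mathrm{tr}(\tilde A^2) = r\bigl(n+\sum_{i\neq j}|\tilde A_{ij}|^2\bigr) \leq r\bigl(n+L^{-2}\sum_{i\neq j}|A_{ij}|^2\bigr)$, which rearranges to the claimed bound. In other words, your argument becomes a complete proof of the lemma as stated once the step $(\mathrm{tr}\,\tilde A)^2\leq r\,\mathrm{tr}(\tilde A^2)$ is replaced by the chain $n=\sum_i|\tilde A_{ii}|\leq \sum_k|\lambda_k|\leq \sqrt{r\,\mathrm{tr}(\tilde A^2)}$; no positivity of the diagonal is needed anywhere, and no appeal to how the lemma is invoked is required.
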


The matrix scaling theorem allows us to control the $\ell_2$ norms of the columns and rows of $A'$, which in turn allow us to bound the sums of squares of entries of $M$. For this, we use a variation of a lemma from \cite{dsw14}. While the proof idea is the same, our proof requires a somewhat more careful analysis.

For any $x_1, x_2, \dots, x_n \in \RR$, a standard application of the Cauchy-Schwarz inequality implies
\begin{equation}
\label{eq:csinequality}
\left(\sum_{i = 1}^{n} x_i\right)^2 \leq n \sum_{i = 1}^{n}x_i^2,
\end{equation}
with equality if and only if $x_1 = x_2 = \dots = x_n$. As mentioned in Section~\ref{sec:overview}, a key ingredient in our proof is to use information about cancellations in the inner product. To this end, we make use of the following:
\begin{equation}
\label{eq:csequality}
\left(\sum_{i = 1}^{n} x_i\right)^2 + \sum_{i < j} (x_i - x_j)^2 = n \sum_{i = 1}^{n}x_i^2.
\end{equation}
The latter term on the LHS in Equation~\eqref{eq:csequality} quantifies the difference between the terms on two sides of Equation~\eqref{eq:csinequality}. The proof for Lemma~\ref{le:offdiagonalsum} will use Equation~\eqref{eq:csequality} twice. To simplify the presentation, we need the following definitions: 
\begin{definition}\label{def:gapparameters}
Let $A$ be an $m \times n$ matrix over $\CC$, and let $\supp(i, j) := \supp(C_i) \cap \supp(C_j)$. Then we define: 
\[D(A) := \sum_{i \neq j} \sum_{\substack{{k < k^\prime}\\{k, k' \in \supp(i, j)}}} \left| A_{ki}\overline{A_{kj}} - A_{k^\prime i}\overline{A_{k^\prime j}} \right|^2, \]
and\[ E(A) := \sum_{k = 1}^{m} \sum_{\substack{{i < j}\\{i, j \in \supp(R_k)}}} \left(|A_{ki}|^2 - |A_{kj}|^2\right)^2. \]
\end{definition}
Note that both $D(A)$ and $E(A)$ are non-negative real numbers.
\paragraph{Discussion on $D(A)$ and $E(A)$:}
To give some insight into the proof, we include a brief discussion about $D(A)$ and $E(A)$. Consider the matrix
$$A = \begin{bmatrix}
1  &   1  &   2   &  0\\
1  &   1  &   0   &  1\\
1  &   0  &   1   &  1\\
0  &   1  &   1   &  1\\
\end{bmatrix}, $$ and let $M = A^* A$. Observe that $M_{ij}^2 = \langle C_i, C_j \rangle^2 = \left( \sum_{k = 1}^{4} A_{ki} A_{kj} \right)^2$. Equation~\eqref{eq:csinequality} implies that $\left( \sum_{k = 1}^{4} A_{ki} A_{kj} \right)^2 \leq 4 \sum_{k = 1}^{4} A_{ki}^2 A_{kj}^2.$ An obvious improvement here comes from the fact the supports of any two columns intersect in only two entries, giving $\left( \sum_{k = 1}^{4} A_{ki} A_{kj} \right)^2 \leq 2 \sum_{k = 1}^{4} A_{ki}^2 A_{kj}^2$. For~$i = 1$, $j = 2$, this gives $(1 + 1)^2 = 4 \leq 2 (1^2 + 1^2) = 4$ and is tight because $A_{11} A_{12} = A_{21} A_{22} = 1$. For $i = 1$, $j = 3$, it implies $(2 + 1)^2 = 9 \leq 2(2^2 + 1^2) = 10$. The difference is precisely captured by $(A_{11} A_{13} - A_{21} A_{23})^2 = (2 - 1)^2 = D(A)$.

$E(A)$ comes into the picture in bounding $\sum_{i = 1}^{4} A_{ki}^4$, for fixed $k$. Equation~\eqref{eq:csinequality} implies $\sum_{i = 1}^{4} A_{ki}^4 \geq \frac{1}{3}\left(\sum_{i = 1}^{4} A_{ki}^2\right)^2$. Equality holds if and only if all non-zero entries in row $k$ are equal., e.g., when $k \in {2, 3, 4}$. Otherwise, e.g., when $k = 1$, $E(A)$ quantifies the difference.

In Section~\ref{sec:dependencymatrix}, we will show that for the dependency matrix, each row will either have all non-zero entries far from each other, i.e., it will contribute to $E(A)$, or it will, together with other rows, contribute to $D(A)$.

We now give the main result for this section.
\begin{lemma}
\label{le:offdiagonalsum}
Let $A$ be an $m \times n$ matrix over $\CC$. Suppose that each row of $A$ has $\ell_2$ norm  $< \alpha$, the supports of every two columns of $A$ intersect in at most $t$ locations, and the size of the support of every row is at most $q$. Let $M = A^*A$.
Then \[ \sum_{i \neq j} |M_{ij}|^2 \leq \left( 1 - \frac{1}{q}\right) tm\alpha^4 - \left(D(A) + \frac{t}{q}E(A)\right). \]
\end{lemma}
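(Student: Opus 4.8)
The plan is to prove the identity by a direct second-moment expansion of $M=A^*A$, using a single elementary ``variance'' identity twice: once across rows (to produce $D(A)$) and once across columns (to produce $E(A)$). The engine is the observation that for any complex numbers $z_1,\dots,z_N$,
\[ \Bigl|\sum_{k=1}^N z_k\Bigr|^2 = N\sum_{k=1}^N |z_k|^2 - \sum_{k<k'} |z_k - z_{k'}|^2, \]
which follows by expanding both sides (each $|z_k|^2$ appears with multiplicity $N-1$ on the right, and the cross terms assemble into $\sum_{k\neq k'} z_k\overline{z_{k'}}$). The whole argument is an exact equality, so the only real work is bookkeeping the multiplicities, which is exactly where the three hypotheses enter.

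First I would fix $i \neq j$ and write $M_{ij} = \sum_k \overline{A_{ki}} A_{kj}$. Since the supports of columns $i$ and $j$ meet in exactly $t$ rows, the products $z_k := A_{ki}\overline{A_{kj}}$ are nonzero for precisely $t$ values of $k$, and $\overline{M_{ij}} = \sum_k z_k$. Applying the engine identity over these $t$ positions gives
\[ |M_{ij}|^2 = t\sum_k |A_{ki}|^2|A_{kj}|^2 - \sum_{k<k'}\bigl|A_{ki}\overline{A_{kj}} - A_{k'i}\overline{A_{k'j}}\bigr|^2. \]
Summing over all ordered pairs $i\neq j$, the second term assembles into $D(A)$, while the first becomes $t\,T_1$ where $T_1 := \sum_k \sum_{i\neq j}|A_{ki}|^2|A_{kj}|^2$. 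So at this stage $\sum_{i\neq j}|M_{ij}|^2 = t\,T_1 - D(A)$, and it remains to evaluate $T_1$.

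Next I would evaluate $T_1$ row by row. For a fixed row $k$, the row-norm hypothesis gives $\sum_i |A_{ki}|^2 = \alpha^2$, so
\[ \sum_{i\neq j}|A_{ki}|^2|A_{kj}|^2 = \Bigl(\sum_i |A_{ki}|^2\Bigr)^2 - \sum_i |A_{ki}|^4 = \alpha^4 - \sum_i |A_{ki}|^4. \]
To handle $\sum_i|A_{ki}|^4$ I would apply the engine identity a second time, now with the real numbers $z_i := |A_{ki}|^2$ ranging over the $q$ columns in the support of row $k$: this yields $\alpha^4 = q\sum_i |A_{ki}|^4 - \sum_{i<j}(|A_{ki}|^2 - |A_{kj}|^2)^2$, hence $\sum_i |A_{ki}|^4 = \tfrac1q\bigl(\alpha^4 + \sum_{i<j}(|A_{ki}|^2-|A_{kj}|^2)^2\bigr)$. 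Summing over $k$ and recognizing the double sum as $E(A)$ gives $\sum_k\sum_i|A_{ki}|^4 = \tfrac1q\bigl(m\alpha^4 + E(A)\bigr)$, and therefore $T_1 = m\alpha^4 - \tfrac1q(m\alpha^4 + E(A)) = (1-\tfrac1q)m\alpha^4 - \tfrac1q E(A)$.

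Finally, substituting into $\sum_{i\neq j}|M_{ij}|^2 = t\,T_1 - D(A)$ produces $(1-\tfrac1q)tm\alpha^4 - \tfrac tq E(A) - D(A)$, the claimed formula. The step I expect to require the most care is the first application of the identity: one must apply it over exactly the $t$-element common support of the two columns rather than over all $m$ rows, since it is this multiplicity $t$ (not $m$) that keeps the main term linear in $t$ and lets the pairwise-difference sum coincide with $D(A)$; the analogous point for the second application is that the relevant count is the row-support size $q$. Everything else is routine expansion, and because each identity is exact the final statement is an equality rather than an inequality.
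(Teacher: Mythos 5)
Your proposal is correct and takes essentially the same route as the paper's own proof: the paper likewise upgrades the two Cauchy--Schwarz steps to exact ``defect'' identities, first across the common support of two columns (producing $D(A)$ and the factor $t$) and then within the support of each row (producing $E(A)$ and the factor $q$), with identical bookkeeping. If anything, you are slightly more explicit than the paper about the one delicate point, namely that the identity must be applied over the $t$-element common support (with $D(A)$, and similarly $E(A)$, read as summing only over pairs inside the relevant supports), which is exactly the convention under which the stated equality holds.
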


\begin{proof}
Note that 
\begin{equation}
\label{eq:od1}
 \sum_{i \neq j} |M_{ij}|^2 = \sum_{i \neq j} |\langle C_i, C_j \rangle|^2  =  \sum_{i \neq j} \left| \sum_{k = 1}^{m} A_{ki} \overline{A_{kj}} \right|^2 =  \sum_{i \neq j} \left| \sum_{k \in \supp(i, j)} A_{ki} \overline{A_{kj}} \right|^2.
\end{equation}
Here the last equality uses the fact that the only terms contributing to the sum in the inner summation come from $k~\in~\supp(i, j)$. Since the supports of any two columns of $A$ intersect in at most~$t$ locations, $|\supp(i, j)| \leq t$ for all $i \neq j$. Combining this with Equation~\ref{eq:csequality} gives:
\begin{align*}
& \sum_{i \neq j} \left| \sum_{k \in \supp(i, j)} A_{ki} \overline{A_{kj}} \right|^2 \\
& \leq \sum_{i \neq j} \left (t \sum_{k \in \supp(i, j)} |A_{ki}|^2|A_{kj}|^2 - \sum_{\substack{{k < k^\prime}\\{k, k' \in \supp(i, j)}}} \left| A_{ki}\overline{A_{kj}} - A_{k^\prime i}\overline{A_{k^\prime j}} \right|^2 \right)\\
& = \sum_{i \neq j} \left (t \sum_{k = 1}^{m} |A_{ki}|^2|A_{kj}|^2 - \sum_{\substack{{k < k^\prime}\\{k, k' \in \supp(i, j)}}} \left| A_{ki}\overline{A_{kj}} - A_{k^\prime i}\overline{A_{k^\prime j}} \right|^2 \right) \\
& = t\sum_{i \neq j} \sum_{k = 1}^{m} |A_{ki}|^2|A_{kj}|^2 - D(A) \\
& = t \sum_{k = 1}^{m} \left(\sum_{i=1}^{n} |A_{ki}|^2 \right)^2 - t \sum_{k = 1}^{m} \left(\sum_{i=1}^{n} |A_{ki}|^4 \right) -  D(A).
\end{align*}
We now use Equation~\ref{eq:csequality} again to bound the second term in the expression above. This time we may restrict our attention to the at most  $q$ non-zero entries in each row. This gives
\begin{align*}
\sum_{i \neq j} |M_{ij}|^2 & \leq t \sum_{k = 1}^{m} \left(\sum_{i=1}^{n} |A_{ki}|^2 \right)^2 - t \sum_{k = 1}^{m} \frac{1}{q}  \left( \left( \sum_{i = 1}^{n} |A_{ki}|^2 \right)^2 + \sum_{\substack{{i < j}\\{i, j \in \supp(R_k)}}} \left(|A_{ki}|^2 - |A_{kj}|^2 \right)^2 \right) -  D(A)\\ 
& = \left(1 - \frac{1}{q}\right) t \sum_{k = 1}^{m} \left(\sum_{i=1}^{n} |A_{ki}|^2 \right)^2  - \frac{t}{q} \sum_{k = 1}^{m} \sum_{\substack{{i < j}\\{i, j \in \supp(R_k)}}} \left(|A_{ki}|^2 - |A_{kj}|^2\right)^2 -  D(A) \\
& = \left(1 - \frac{1}{q}\right) t \sum_{k = 1}^{m} \left(\sum_{i=1}^{n} |A_{ki}|^2 \right)^2   - \frac{t}{q}E(A) -  D(A)  \\
& = \left(1 - \frac{1}{q}\right)tm\alpha^4 - \left(D(A) + \frac{t}{q}E(A)\right).
\end{align*}
\end{proof}

From this, we get the following easy corollary.

\begin{corollary}
\label{co:offdiagonalsumineq}
Let $A$ be an $m \times n$ matrix over $\CC$. Suppose that each row of $A$ has $\ell_2$ norm $ < \alpha$, the supports of every two columns of $A$ intersect in at most $t$ locations, and the size of the support of every row is at most $q$. Let $M = A^*A$.
Then \[ \sum_{i \neq j} |M_{ij}|^2 \leq \left( 1 - \frac{1}{q}\right) tm\alpha^4.  \]
\end{corollary}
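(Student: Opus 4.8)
The plan is to mirror the proof of Lemma~\ref{le:offdiagonalsum} but to keep every step as an inequality, discarding the two correction terms $D(A)$ and $E(A)$, which the paper has already observed are non-negative. Concretely, I would start from $M_{ij} = \langle C_i, C_j\rangle = \sum_{k} A_{ki}\overline{A_{kj}}$ and apply the Cauchy-Schwarz inequality to each off-diagonal entry. The only place where the hypothesis on column intersections enters the argument is here, so this is where I need to accommodate the weakening from ``exactly $t$'' (as in the lemma) to ``at most $t$'' (as in the corollary).

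First I would let $t_{ij}$ denote the actual size of the intersection of the supports of columns $i$ and $j$, so that $t_{ij} \le t$ by hypothesis. Since the inner product $\sum_k A_{ki}\overline{A_{kj}}$ has only $t_{ij}$ nonzero summands, Cauchy-Schwarz gives $|M_{ij}|^2 \le t_{ij}\sum_k |A_{ki}|^2|A_{kj}|^2 \le t\sum_k |A_{ki}|^2|A_{kj}|^2$. Equivalently, this is the identity from the lemma's proof with the non-negative term $\sum_{k<k^\prime}\left|A_{ki}\overline{A_{kj}} - A_{k^\prime i}\overline{A_{k^\prime j}}\right|^2$ dropped and $t_{ij}$ replaced by its upper bound $t$. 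Summing over $i \ne j$ and rearranging then yields $\sum_{i\ne j}|M_{ij}|^2 \le t\sum_{k}\bigl((\sum_i |A_{ki}|^2)^2 - \sum_i |A_{ki}|^4\bigr)$.

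Next I would apply Cauchy-Schwarz a second time, exactly as in the lemma: since each row has $q$ nonzero entries, $\sum_i |A_{ki}|^4 \ge \frac{1}{q}(\sum_i |A_{ki}|^2)^2$, which is where the factor $(1 - \frac{1}{q})$ originates. This gives $\sum_{i\ne j}|M_{ij}|^2 \le (1-\frac{1}{q})\, t\sum_k(\sum_i|A_{ki}|^2)^2$. Finally, using that each row has $\ell_2$ norm $\alpha$, i.e. $\sum_i|A_{ki}|^2 = \alpha^2$ for every $k$, and that there are $m$ rows, the right-hand side collapses to $(1-\frac{1}{q})tm\alpha^4$, as claimed.

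I do not expect a genuine obstacle here, since the conclusion is strictly weaker than that of Lemma~\ref{le:offdiagonalsum}; the only real content is observing that the exact-intersection hypothesis was used solely in the first Cauchy-Schwarz step, and that it survives the weakening because $t_{ij}\le t$ together with the non-negativity of the discarded cross-terms preserves the direction of the inequality. The single point worth stating cleanly is that the row-support-size and row-norm hypotheses remain exact equalities, so no adjustment is needed in the second Cauchy-Schwarz step or in the final substitution.
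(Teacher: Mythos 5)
Your proof is correct and takes essentially the same route as the paper, which obtains the corollary directly from Lemma~\ref{le:offdiagonalsum} by discarding the non-negative terms $D(A)$ and $E(A)$. Your explicit handling of the weakened hypothesis --- running the first Cauchy--Schwarz step with the actual intersection sizes $t_{ij} \le t$ rather than invoking the lemma (whose statement requires intersections of size exactly $t$) as a black box --- is a point of care that the paper leaves implicit.
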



\subsection{Latin squares}

Latin squares play a central role in our proof. While Latin squares play a role in both \cite{dsw14} and \cite{bdwy13}, our proof exploits their design properties more strongly.

\begin{definition}[Latin square]
An $r \times r$ Latin square is an $r \times r$ matrix $L$ such that $L_{ij} \in [r]$ for all $i, j$ and every number in $[r]$ appears exactly once in each row and exactly once in each column.
\end{definition}
If $L$ is a Latin square and $L_{ii} = i$ for all $i \in [r]$, we call it a {\em diagonal} Latin square. 

\begin{lemma}\label{le:latinsq}
For every $r \geq 3$, there exists an $r \times r$ diagonal Latin square. For $r \geq 4$, there exist diagonal Latin squares with the additional property that, for every $i \neq j$, $L_{ij} \neq L_{ji}$.
\end{lemma}
\begin{proof}
For $r \geq 3$, the existence of $r \times r$ diagonal Latin squares was proved by Hilton~\cite{hil73}. Therefore, we need only show the second part of the theorem. For this we rely on {\em self-orthogonal Latin squares}.

Two Latin squares $L$ and $L^\prime$ are called {\em orthogonal} if every ordered pair $(k, l) \in [r]^2$ occurs uniquely as $(L_{ij}, L^\prime_{ij})$ for some $i, j \in [r]$. A Latin square is called {\em self-orthogonal} if it is orthogonal to its transpose, denoted by $L^T$. A theorem of Brayton, Coppersmith, and Hoffman~\cite{bch74} proves the existence of $r \times r$ self-orthogonal Latin squares for $r \in \NN$, $r \neq 2, 3, 6$. Let $L$ be a self-orthogonal Latin square. Since $L_{ii} = L^T_{ii}$, the diagonal entries give all pairs of the form $(i, i)$ for every $i \in [r]$, i.e., the diagonal entries must be a permutation of $[r]$. Without loss of generality, we may assume that $L_{ii} = i$ and so $L$ is also a diagonal Latin square. Clearly a self-orthogonal Latin square satisfies the property that $L_{ij} \neq L_{ji}$ if $i \neq j$.

This leaves us only with the case $r = 6$, which requires separate treatment. It is known that $6\times 6$ self-orthogonal Latin squares do not exist. Fortunately, the property we require is weaker and we are able to give an explicit construction of a matrix that is sufficient for our needs. Let $L$ be the following matrix
$$\begin{bmatrix}
1  &   4  &   5   &  3  &   6  &   2\\
3  &   2  &   6   &  5  &   1  &   4\\
2  &   5  &   3   &  6  &   4  &   1\\
6  &   1  &   2   &  4  &   3  &   5\\
4  &   6  &   1   &  2  &   5  &   3\\
5  &   3  &   4   &  1  &   2  &   6\\
\end{bmatrix}. $$
It is straightforward to verify that $L$ has the required properties.
\end{proof}

The following lemma is a strengthening of a lemma from \cite{bdwy13}.
\begin{lemma}
\label{le:triplesystem}
Let $r \geq 3$. Then there exists a set $T \subseteq [r]^3$, referred to as a {\em triple system}, of $r^2 - r$ triples that satisfies the following properties:
\begin{enumerate}
\item Each triple consists of three distinct elements;
\item For every pair $i, j \in [r]$, $i \neq j$, there are exactly six triples containing both $i$ and $j$;
\item If $r \geq 4$, for every $i, j \in [r]$, $i \neq j$, there are at least two triples containing $i$ and $j$ such that the remaining elements are distinct.
\end{enumerate}
\end{lemma}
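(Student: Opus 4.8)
The plan is to build $T$ from the off-diagonal entries of a single Latin square $L$ on the symbol set $[r]$. Concretely, for every pair $(i,j)$ with $i \neq j$ I would include the triple $(i,j,L_{ij})$, giving exactly $r^2 - r$ triples, one per off-diagonal cell (distinct cells give distinct ordered triples). If $L$ is a \emph{diagonal} Latin square (guaranteed by Theorem~\ref{th:latinsquare} for all $r \geq 3$), then the symbol $i$ occurs in row $i$ only at the diagonal cell $(i,i)$ and the symbol $j$ occurs in column $j$ only at $(j,j)$; hence $L_{ij} \notin \{i,j\}$ whenever $i \neq j$, which is exactly Property~1.

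For Property~2, fix a pair $i \neq j$ and count the cells $(a,b)$, $a \neq b$, whose triple $\{a,b,L_{ab}\}$ contains both $i$ and $j$. Since the three entries of any such triple are distinct, $i$ and $j$ occupy two distinct ``slots'' among (first coordinate, second coordinate, symbol), and there are $3 \cdot 2 = 6$ ordered choices of these two slots. I would check that each of the six slot-assignments is realized by exactly one off-diagonal cell: for instance, placing $i$ in the symbol slot and $j$ in the first-coordinate slot means $a = j$ and $L_{jb} = i$, which pins down $b$ uniquely (and $b \neq j$ since $L_{jj} = j \neq i$), and similarly for the other five. Conversely, each qualifying cell determines the slots of $i$ and $j$ uniquely, so the six assignments give six \emph{distinct} cells. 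This yields exactly $6$ triples through the pair, proving Property~2.

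Property~3 is the genuinely new ingredient and is where I expect the difficulty. Writing out the third (``remaining'') elements of the six triples through $\{i,j\}$, two of them are $L_{ij}$ and $L_{ji}$, coming from the cells $(i,j)$ and $(j,i)$. Thus it suffices to arrange $L_{ij} \neq L_{ji}$ for every $i \neq j$, and I would obtain this from self-orthogonality: if $L$ is self-orthogonal then $(a,b) \mapsto (L_{ab}, L_{ba})$ is a bijection, and taking $L$ diagonal (as Theorem~\ref{th:orthlatinsquare} permits after relabelling symbols) the pair $(c,c)$ is hit only at the diagonal cell $(c,c)$. So $L_{ij} = L_{ji} = c$ with $i \neq j$ would force $(i,j) = (c,c)$, a contradiction. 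Hence $L_{ij} \neq L_{ji}$, the remaining elements of the cells $(i,j)$ and $(j,i)$ differ, and Property~3 follows.

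The main obstacle is that Theorem~\ref{th:orthlatinsquare} excludes the orders $r = 2, 3, 6$. The order $r = 2$ is excluded by hypothesis, and for $r = 3$ Property~3 is not required, so a plain diagonal Latin square already delivers Properties~1 and 2. The real gap is $r = 6$, where no self-orthogonal Latin square exists (this is tied to Euler's $36$-officers phenomenon). Here I would instead verify Property~3 directly: tracing the argument shows that the six remaining elements all coincide precisely when $\{i,j,L_{ij}\}$ spans a \emph{symmetric} order-$3$ sub-Latin-square on its symbols, so it is enough to exhibit one diagonal Latin square of order $6$ containing no such symmetric $3 \times 3$ subsquare, a finite check that can be carried out explicitly. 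With this single exceptional case dispatched, all three properties hold for every $r \geq 3$.
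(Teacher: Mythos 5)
Your proposal follows essentially the same route as the paper's proof: triples $(i,j,L_{ij})$ from a diagonal Latin square give Properties 1 and 2, a self-orthogonal Latin square (made diagonal by relabelling) gives $L_{ij} \neq L_{ji}$ and hence Property 3 for $r \geq 4$, $r \neq 6$, and the order $r=6$ is treated as an exceptional case. The only place you stop short is $r = 6$, which you reduce to a finite search without exhibiting the object; the paper completes this step by writing down an explicit $6 \times 6$ diagonal Latin square satisfying the stronger condition $L_{ij} \neq L_{ji}$ for all $i \neq j$, so your finite check does indeed succeed.
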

\begin{proof}
Let $L$ be a Latin square as in Lemma~\ref{le:latinsq}. Let $T$ be the set of triples $(i, j, k) \subseteq [r]^3$ with $i \neq j$ and $k = L_{ij}$. Clearly the number of such triples is $r^2 - r$. We verify that the properties mentioned hold. 

Recall that we have $L_{ii} = i$ for all $i \in [r]$, and every value appears once in each row and column. So for $i \neq j \in [r]$, it can not happen that $L_{ij} = i$ or $L_{ij} = j$ and we get Property 1, i.e., all elements of a triple must be distinct.

For Property 2, note that a pair $i, j$ appears once as $(i, j, L_{ij})$ and once as $(j, i, L_{ji})$. And since every element appears exactly once in every row and column, we have that $i$ must appear once in the $j^{th}$ row, $j$ must appear once in the $i^{th}$ row and the same for the columns. It follows that each of $(*, j, i), (j,*, i), (*, i, j)$ and $(i, *, j)$ appears exactly once, where $*$ is some other element of $[r]$. This gives us that every pair appears in exactly six triples.

For $r \geq 4$ and $i \neq j$, since $L_{ij} \neq L_{ji}$, the triples $(i, j, L_{ij})$ and $(j, i, L_{ji})$ are sufficient to satisfy Property 3. 
\end{proof}

\section{The dependency matrix}
\label{sec:dependencymatrixprelim}

Let $\V = \{v_1, \dots, v_n\}$ be a set of $n$ points in $\CC^d$.
We will use $\dim(\V)$ to denote the dimension of the linear span of $\V$, and by $\adim(\V)$ the dimension of the affine span of $\V$ (i.e., the minimum $r$ such that points of $\V$ are contained in a shift of a linear subspace of dimension $r$).
We projectivize $\CC^d$ and consider the set of vectors $\V^\prime = \{v^\prime_1, \dots, v^\prime_n\}$, where $v^\prime_i = (v_i, 1)$ is the vector in $\CC^{d+1}$ obtained by appending a 1 to the vector $v_i$. Let $V$ be the $n \times (d+1)$ matrix whose $i^{th}$ row is the vector $v^\prime_i$. Now note that 
\[ \adim(\V) = \dim(\V^\prime) - 1 = \rank(V) - 1. \]

We now construct a matrix $A$, which we refer to as the dependency matrix of $\V$. The construction we give here is preliminary, but suffices to prove Theorems~\ref{th:3n/2} and \ref{th:higherdim}. A refined construction is given in Section~\ref{sec:dependencymatrix}, where we select the triples more carefully. The rows of the matrix will consist of linear dependency coefficients, which we define below.
\begin{definition}[Linear dependency coefficients]
Let $v_1, v_2$ and $v_3$ be three distinct collinear points in $\CC^d$, and let $v_i^\prime = (v_i, 1)$, $i \in \{1, 2, 3\}$, be vectors in $\CC^{d+1}$. Recall that $v_1, v_2, v_3$ are collinear if and only if there exist nonzero coefficients $a_1, a_2 ,a_3\in \CC$ such that \[ a_1 v^\prime_1 + a_2 v^\prime_2 + a_3v^\prime_3 = 0. \]
We refer to the $a_1, a_2$ and $a_3$ as the linear dependency coefficients between $v_1, v_2, v_3$. Since these coefficients are determined up to scaling by a complex number, we fix a canonical choice by setting $a_3 = 1$.
\end{definition}

\begin{definition}[Dependency Matrix]\label{def:dependency}
For every line $l \in \L_{\geq 3}(\V)$, let $\V_l$ denote the points lying on $l$. Then $|\V_l| \geq 3$ and we assign each line a triple system $T_l \subseteq [|\V_l|]^3$, the existence of which is guaranteed by  Lemma~\ref{le:triplesystem}. Let $A$ be the $m \times n$ matrix obtained by going over every line $l \in \L_{\geq 3}$ and for each triple $(i, j, k) \in T_l$, adding as a row of $A$ a vector with three nonzero coefficients in positions $i, j, k$ corresponding to the linear dependency coefficients among the points $v_i, v_j, v_k$. We refer to $A$ as the dependency matrix for $\V$.
\end{definition}

Note that we have $AV = 0$. Every row of $A$ has exactly three nonzero entries. By Property 2 of Lemma~\ref{le:triplesystem}, the supports of any distinct two columns intersect in exactly six entries when the two corresponding points lie on a special line, and are disjoint otherwise. That is, the supports of any two distinct columns intersect in at most six entries.

We say a pair of points $v_i, v_j$, $i \neq j$, {\em appears} in the dependency matrix $A$ if there exists a row with nonzero entries in columns $i$ and $j$. The number of times a pair appears is the number of rows with nonzero entries in both columns $i$ and $j$.

Every pair of points that lies on a special line appears exactly six times. The only pairs not appearing in the matrix are pairs of points that determine ordinary lines. There are $n \choose 2$ pairs of points, $t_2(\V)$ of which determine ordinary lines. So the number of pairs appearing in $A$ is ${n \choose 2} - t_2$. The total number of times these pairs appear is then $6\left({n \choose 2} - t_2\right)$. Every row gives three distinct pairs of points, so it follows that the number of rows of $A$ is
\begin{align}\label{eq:num-rows}
m = 6\left({n \choose 2} - t_2\right)/3 = n^2 - n - 2t_2(\V).
\end{align}
Note that $m > 0$, unless $t_2 = {n \choose 2}$, i.e., all lines are ordinary.

As mentioned in the proof overview, we will consider two cases: when $A$ satisfies Property-$S$ and when it does not. We now prove lemmas dealing with the two cases. The following lemma deals with the former case.

\begin{lemma} \label{th:3n/2 Prop S}
Let $\V$ be a set of $n$ points affinely spanning $\CC^d$, $d \geq 3$, and let $A$ be the dependency matrix for $\V$. Suppose that $A$ satisfies Property-$S$. Then \[ t_2(\V) \geq \frac{(d-3)}{2(d+1)} n^2 + \frac{3}{2} n. \]
\end{lemma}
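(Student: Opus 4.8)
The plan is to lower-bound $\rank(A)$ using the scaling machinery and then convert this into a bound on $t_2$ via the identity $m = n^2 - n - 2t_2$ together with the rank constraint coming from $AV = 0$. Since $\V$ affinely spans $\CC^d$ we have $\rank(V) = \adim(\V) + 1 = d+1$, and $AV = 0$ forces $\rank(A) \le n - \rank(V) = n - (d+1)$. So it suffices to produce a matching lower bound on $\rank(A)$ of roughly $mn/(m+4n)$; comparing the two inequalities and solving for $m$ will then yield the claimed bound on $t_2$.

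To lower-bound $\rank(A)$, I would first use the hypothesis that $A$ satisfies Property-$S$ to invoke Corollary~\ref{co:complexscaling}: for every $\epsilon > 0$ there is a scaling $A^\prime$ of $A$ whose every row satisfies $\sum_{j} |A^\prime_{ij}|^2 = 1 + \epsilon$ and whose every column satisfies $\sum_{i} |A^\prime_{ij}|^2 \ge m/n - \epsilon =: L$. Scaling preserves both the rank and the combinatorial support structure, so $A^\prime$ still has exactly $q = 3$ nonzero entries per row, and any two columns still intersect in at most $t = 6$ positions. Setting $M = A^{\prime *} A^\prime$, we have $\rank(M) = \rank(A^\prime) = \rank(A)$, and $M$ is Hermitian with diagonal entries $M_{jj} = \sum_i |A^\prime_{ij}|^2 \ge L$. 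This places us exactly in the setting of the diagonal-dominance rank bound of Lemma~\ref{le:rankbound}, giving $\rank(A) \ge n^2 L^2 / \bigl(n L^2 + \sum_{i \neq j} |M_{ij}|^2\bigr)$.

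The off-diagonal mass is controlled by Corollary~\ref{co:offdiagonalsumineq} applied to $A^\prime$: with row $\ell_2$-norm $\alpha$ satisfying $\alpha^2 = 1+\epsilon$, and with $t = 6$, $q = 3$, this yields $\sum_{i \neq j} |M_{ij}|^2 \le (1 - \tfrac13)\cdot 6 \cdot m (1+\epsilon)^2 = 4m(1+\epsilon)^2$. Substituting into the rank bound and letting $\epsilon \to 0^+$ (the left-hand side is an $\epsilon$-independent integer, the right-hand side is continuous in $\epsilon$) gives $\rank(A) \ge mn/(m+4n)$. Finally, combining $mn/(m+4n) \le \rank(A) \le n - (d+1)$ and clearing denominators gives $m(d+1) \le 4n(n-d-1)$, i.e. $m \le 4n^2/(d+1) - 4n$. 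Substituting $m = n^2 - n - 2t_2$ and solving for $t_2$ produces $2t_2 \ge n^2 (d-3)/(d+1) + 3n$, which is precisely the stated inequality.

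The conceptual content here lives entirely in the already-established lemmas, so the work is assembly plus routine algebra; the care points are the following. First, the $\epsilon$-bookkeeping must be handled correctly: the column bound "$\ge m/n - \epsilon$" is what feeds the diagonal lower bound $L$, and one must confirm the limit $\epsilon \to 0$ is legitimate. Second, one must correctly read off the parameters $t = 6$, $q = 3$, $\alpha^2 = 1+\epsilon$ to feed into Corollary~\ref{co:offdiagonalsumineq}, using the "at most $t$" version (Corollary~\ref{co:offdiagonalsumineq}) rather than the "exactly $t$" equality of Lemma~\ref{le:offdiagonalsum}. I do not anticipate a genuine obstacle beyond verifying these bounds match the hypotheses of each cited result.
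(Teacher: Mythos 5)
Your proposal is correct and follows essentially the same route as the paper's own proof: invoke Corollary~\ref{co:complexscaling} via Property-$S$, bound the off-diagonal mass of $M = A^{\prime*}A^\prime$ with Corollary~\ref{co:offdiagonalsumineq} (with $q=3$, $t=6$, $\alpha^2 = 1+\epsilon$), apply Lemma~\ref{le:rankbound}, let $\epsilon \to 0$ to get $\rank(A) \geq mn/(m+4n)$, and combine with $\rank(A) \leq n - (d+1)$ from $AV=0$ before substituting $m = n^2 - n - 2t_2(\V)$. The algebra matches the paper's line for line (the paper merely phrases the final step as $d+1 \leq 4n^2/(n^2+3n-2t_2)$ rather than isolating $m$ first), so there is nothing further to flag.
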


\begin{proof}
Fix $\epsilon>0.$ Since $A$ satisfies Property-$S$, by Corollary~\ref{co:complexscaling} there is a scaling $A'$ such that the $\ell_2$ norm of each row is at most $\sqrt{1+\epsilon}$ and the $\ell_2$ norm of each column is at least $\sqrt{\frac{m}{n}-\epsilon}$.  Let $M:=A'^*A'.$  Then
$M_{ii}\geq \frac{m}{n}-\epsilon$ for all $i$.  Since every row in $A$ has support of size three, and the supports of any two columns intersect in at most six locations,  Corollary \ref{co:offdiagonalsumineq} gives us that $\sum\limits_{i\neq j}|M_{ij}|^2 \leq 4m(1+\epsilon)^2.$  Applying Lemma~\ref{le:rankbound} to $M$, we have 
\[ \rank(M)\geq \frac{n^2(\frac{m}{n}-\epsilon)^2}{n(\frac{m}{n}-\epsilon)^2+ 4m(1+\epsilon)^2}. \]  Taking $\epsilon$ to 0, and combining with Equation \eqref{eq:num-rows} gives
\begin{align*}
\rank(A) = \rank(A') = \rank(M) & \geq \frac{n^2\frac{m^2}{n^2}}{n\frac{m^2}{n^2}+4m} = \frac{mn}{m+4n}\\
& = n-\frac{4n^2}{m+4n} = n-\frac{4n^2}{n^2-n-2t_2(\V) +4n}\\
& = n-\frac{4n^2}{n^2+3n-2t_2(\V)}.
\end{align*}
Recall that $\adim(\V) = d = \rank(V) - 1$. Since $AV = 0$, we have $\rank(V) \leq n - \rank(A)$. It follows that
\begin{align*}
d + 1 & \leq \frac{4n^2}{n^2+3n-2t_2(\V)}, \\
\mbox{i.e.,} \quad t_2(\V) & \geq \frac{(d-3)}{2(d+1)} n^2 + \frac{3}{2} n.
\end{align*}
\end{proof}

We now consider the case when Property-$S$ is not satisfied.
\begin{lemma}\label{le:not-propertys}
Let $\V$ be a set of $n$ points in $\CC^d$, and let $A$ be the dependency matrix for $\V$. Suppose that $A$ does not satisfy Property-$S$. Then, for every integer $b^*$, $1 < b^* < 2n/3$, at least one of the following holds:
\begin{enumerate}
\item There exists a point $v \in \V$ contained in at least $\frac{2}{3}(n+1) - b^*$ ordinary lines;
\item $ t_2(\V) \geq nb^*/2$.
\end{enumerate}
\end{lemma}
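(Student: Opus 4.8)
The plan is to extract from the failure of Property-$S$ a combinatorial configuration and read off ordinary lines from it, with the dichotomy governed by the size $b$ of the offending zero submatrix compared to $b^*$. Since $A$ fails Property-$S$, there is an $a \times b$ zero submatrix with $\frac{a}{m} + \frac{b}{n} > 1$. Its $b$ columns correspond to a set $S \subseteq \V$ of $b$ points, and I would first enlarge the row set to \emph{all} rows of $A$ avoiding $S$ (this only increases $a$, so the inequality survives). A row avoids $S$ exactly when its triple lies inside $T := \V \setminus S$, so after this step $a$ equals the number $a_S$ of rows whose triple is contained in $T$, and $\frac{a_S}{m} > \frac{n-b}{n}$; equivalently the number $m - a_S$ of rows meeting $S$ satisfies $m - a_S < \frac{mb}{n}$. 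Note $b < n$, since no row of $A$ is identically zero. Throughout I use the bookkeeping facts recorded before the lemma: $m = n^2 - n - 2t_2(\V)$, and, since each pair on a special line appears $6$ times, each point $v$ lies in exactly $3(n-1-o_v)$ rows, where $o_v$ denotes the number of ordinary lines through $v$.

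The first half of the argument handles large $b$ and yields conclusion (2). I bound $a_S$ from above by counting pairs inside $T$: every row inside $T$ contributes three pairs from $T$, and each pair of $T$ on a special line appears at most $6$ times, so $3 a_S \le 6\binom{n-b}{2}$, i.e. $a_S \le (n-b)(n-b-1)$. Combined with $a_S > m\frac{n-b}{n}$ and cancelling the factor $n-b > 0$, this gives $\frac{m}{n} < n - b - 1$; substituting $m = n^2-n-2t_2(\V)$ yields $t_2(\V) > \frac{nb}{2}$. Hence if $b \ge b^*$ we already have $t_2(\V) > \frac{nb^*}{2}$, which is conclusion (2).

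The second half handles small $b$ and yields conclusion (1). Here I count the rows meeting $S$ from the point side: this number equals $\sum_{v \in S} 3(n-1-o_v)$ minus the overcount from rows containing two or three points of $S$, and that overcount is at most $3b(b-1)$ since it is controlled by the at most $\binom{b}{2}$ pairs inside $S$ (each appearing $6$ times). Combining with $m - a_S < \frac{mb}{n}$ and averaging over $S$, some $v \in S$ satisfies $3(n-1-o_v) < \frac{m}{n} + 3(b-1)$, which after substituting $\frac{m}{n} = (n-1) - \frac{2t_2(\V)}{n}$ rearranges to $o_v > \frac{2}{3}n + \frac{1}{3} + \frac{2t_2(\V)}{3n} - b$. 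If $b < b^*$, then $b \le b^* - 1$ by integrality, and dropping the nonnegative term gives $o_v > \frac{2}{3}n + \frac{4}{3} - b^* > \frac{2}{3}(n+1) - b^*$, which is conclusion (1).

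Finally, every integer $b$ falls into exactly one of $b \ge b^*$ or $b < b^*$, so one of the two conclusions always holds. The step I expect to be most delicate is the refined double count in the small-$b$ case: one must track the overcount coming from pairs inside $S$ accurately, and it is precisely the integrality gain $b \le b^* - 1$ that converts the constant $\frac{1}{3}$ produced by the averaging into the $\frac{2}{3}$ needed for the stated bound $\frac{2}{3}(n+1) - b^*$. The upper bound $a_S \le (n-b)(n-b-1)$ in the large-$b$ case, by contrast, is robust and immediately forces $t_2(\V) > \frac{nb}{2}$.
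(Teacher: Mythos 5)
Your proof is correct and follows essentially the same route as the paper's: the same dichotomy on the column count $b$ of the (row-maximal) violating zero submatrix versus $b^*$, with an identical pair count $3a_S \le 6\binom{n-b}{2}$ in the large-$b$ case yielding $t_2(\V) > nb/2$. In the small-$b$ case, your count of rows meeting $S$ via the exact formula $3(n-1-o_v)$ with an inclusion--exclusion correction is the dual phrasing of the paper's count of special cross pairs via rows with one, two, or three entries in the violating columns, and it produces the same per-point average $\frac{2}{3}n + \frac{1}{3} - b$, so the two arguments coincide quantitatively.
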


\begin{proof}
Since $A$ violates Property-$S$, there exists a zero submatrix supported on rows $U \subseteq [m]$ and columns $W \subseteq [n]$ of the matrix $A$, where $|U| = a$ and $|W| = b$, such that \[ \frac{a}{m} + \frac{b}{n} > 1.\]  Let $X = [m] \setminus U$ and $Y = [n] \setminus W$ and note that $|X| = m - a$ and $|Y| = n - b$.  Let the violating columns correspond to the set $\V_1 = \{v_1, \dots,v_b\} \subset \V$.  We consider two cases: when $b < b^*$, and when $b \geq b^*$.

{\bf Case 1} $(b <b^*)$. We may assume that $U$ is maximal, so every row in the submatrix $X \times W$ has at least one nonzero entry. Partition the rows of $X$ into three parts: Let $X_1, X_2$ and $X_3$ be rows with one, two and three nonzero entries in columns of $W$ respectively.  We will get a lower bound on the number of ordinary lines containing exactly one point in $\V_1$ and one point in $\V \setminus \V_1$ by bounding the number of pairs $\{v_i, w\}$, with $v_i \in \V_1$ and $w \in \V \setminus \V_1$, that lie on special lines. Note that there are at most $b(n-b)$ such pairs, and each pair that does not lie on a special line determines an ordinary line.

Each row of $X_1$ gives two pairs of points $\{v_i, w_1\}$ and $\{v_i, w_2\}$ that lie on a special line, where $v_i \in \V_1$ and $w_1,w_2 \in \V\setminus \V_1$. Each row of $X_2$ gives two pairs of points $\{v_i, w\}$ and $\{v_j, w\}$, where $v_i,v_j \in \V_1$ and $w \in\V\setminus\V_1$ that lie on special lines. Each row of $X_3$ has all zero entries in the submatrix supported on $X\times Y$, so does not contribute any pairs. Recall that each pair of points on a special line appears exactly six times in the matrix.  This implies that the number of pairs that lie on special lines with at least one point in $\V_1$ and one point in $\V \setminus \V_1$ is $\frac{2|X_1|+2|X_2|}{6} \leq \frac{2|X|}{6}$. Hence, the number of ordinary lines containing exactly one of $v_1, \dots ,v_b$ is at least $b(n-b) - \frac{|X|}{3}$. 

Recall that
\begin{align*}
 1 < \frac{a}{m} + \frac{b}{n} = \left(1 - \frac{|X|}{m}\right) + \frac{b}{n}.
\end{align*}
Substituting $m \leq n^2 - n$ (a consequence of Equation \eqref{eq:num-rows}) gives 
\begin{align*}
|X| & < \frac{bm}{n} \leq b(n - 1).
\end{align*}
This gives that the number of ordinary lines containing exactly one point in $\V_1$ is at least
 \[ b(n-b) - \frac{|X|}{3}  > \frac{2b}{3}n - \frac{3b^2-b}{3}. \]
We now have that there exists $v \in \V_1$ such that the number of ordinary lines containing $v$ is at least
\[ \left\lfloor\frac{2}{3}n- \frac{3b-1}{3}\right\rfloor \geq \left\lfloor \frac{2}{3}n - b^* + \frac{4}{3}\right\rfloor \geq \frac{2}{3}(n+1) - b^*. \]

{\bf Case 2} $(b \geq b^*)$. We will determine a lower bound for $t_2(\V)$ by counting the number of nonzero pairs of entries $A_{ij}, A_{ij'}$ with $j \neq j^\prime$, that appear in the submatrix $U \times Y$.  There are ${n - b\choose 2}$ pairs of points in $\V \setminus \V_1$, each of which appears at most six times, therefore the number of pairs of such entries is at most $6{n-b\choose 2}$.  Each row of $U$ has three pairs of nonzero entries, i.e., the number of pairs of entries equals $3a$. It follows that 
\begin{align}
\label{eq:coefficientpairs3}3a \leq 6{n-b \choose 2}.
\end{align}
Combining Equation \eqref{eq:num-rows} with the fact that $\frac{a}{m} +\frac{b}{n} > 1$ implies
\begin{align}
\label{eq:coefficientpairs4}
a > m\left(1-\frac{b}{n}\right) = \left(n^2-n-2t_2(\V)\right)\left(1-\frac{b}{n}\right).
\end{align}
Equations~(\ref{eq:coefficientpairs3}) and (\ref{eq:coefficientpairs4}) together give 
\begin{align*}
\left(n^2-n-2t_2(\V)\right)\left(1-\frac{b}{n}\right) < 2{n-b\choose 2}.
\end{align*}
Finally, solving for $t_2(\V)$, we have
\[ t_2(\V) > \frac{nb}{2}  \geq \frac{nb^*}{2}. \]
\end{proof}

\section{Proofs of Theorems~\ref{th:3n/2} and \ref{th:higherdim}}
\label{sec:3n/2}

The proofs of both Theorems~\ref{th:3n/2} and \ref{th:higherdim} rely on Lemmas~\ref{th:3n/2 Prop S} and \ref{le:not-propertys}. Together, these lemmas imply that there must be a point with many ordinary lines containing it, or there are many ordinary lines in total. As mentioned in the proof overview, the theorems are then obtained by using an iterative argument removing a point contained in many ordinary lines, and then applying the same argument to the remaining points.

\subsection{Proof of Theorem \ref{th:3n/2}}

We get the following easy corollary from Lemma~\ref{th:3n/2 Prop S} and Lemma~\ref{le:not-propertys}.
\begin{corollary}\label{th:eitheror}
Let $\V$ be a set of $n \geq 5$ points in $\CC^d$ not contained in a plane.
Then at least one of the following holds:
\begin{enumerate}
\item There exists a point $v \in \V$ contained in at least $\frac{2}{3}n - \frac{7}{3}$ ordinary lines;
\item $t_2(\V) \geq \frac{3}{2}n$.
\end{enumerate}
\end{corollary}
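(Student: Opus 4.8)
The plan is to case-split on whether the dependency matrix $A$ of $\V$ satisfies Property-$S$, feeding the two regimes into Lemma~\ref{th:3n/2 Prop S} and Lemma~\ref{le:not-propertys} respectively; the only real content is choosing the free parameter $b^*$ correctly.

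In the first case, where $A$ satisfies Property-$S$, I would simply apply Lemma~\ref{th:3n/2 Prop S}. Since $\V$ is not contained in a plane, its affine span has dimension $d = \adim(\V) \geq 3$, so the quadratic coefficient $\frac{d-3}{2(d+1)}$ appearing in that lemma is nonnegative. Discarding the (nonnegative) quadratic term therefore already leaves $t_2(\V) \geq \frac{3}{2}n$, which is exactly alternative (2). I note in passing that for $d \geq 4$ this same argument in fact yields a quadratic lower bound, which is the mechanism behind Theorem~\ref{th:higherdim}.

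In the second case, where $A$ fails Property-$S$, I would invoke Lemma~\ref{le:not-propertys}, whose two conclusions are parametrized by an integer $b^*$ with $1 < b^* < 2n/3$. The strategy is to choose $b^*$ so that each alternative of that lemma dominates the corresponding alternative we are trying to prove. Conclusion (1) supplies a point on at least $\frac{2}{3}(n+1) - b^*$ ordinary lines, and requiring this to be at least $\frac{2}{3}n - \frac{7}{3}$ forces $b^* \leq 3$; conclusion (2) supplies $t_2(\V) \geq nb^*/2$, and requiring this to be at least $\frac{3}{2}n$ forces $b^* \geq 3$. Hence $b^* = 3$ is the one choice that makes both alternatives line up, giving either a point on at least $\frac{2}{3}(n+1)-3 = \frac{2}{3}n - \frac{7}{3}$ ordinary lines (alternative (1)), or $t_2(\V) \geq \frac{3}{2}n$ (alternative (2)).

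The remaining check is admissibility: $b^* = 3$ is a legal choice in Lemma~\ref{le:not-propertys} exactly when $1 < 3 < 2n/3$, and the right-hand inequality reduces to $n > 9/2$, which is precisely why the hypothesis $n \geq 5$ is imposed. I do not expect a genuine obstacle here — the argument is a bookkeeping combination of the two preceding lemmas — and the only delicate point is that the balance value $b^* = 3$ happens to be an integer lying in the admissible range exactly when $n \geq 5$.
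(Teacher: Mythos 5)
Your proposal is correct and coincides with the paper's own proof: the paper also splits on whether the dependency matrix satisfies Property-$S$, invokes Lemma~\ref{th:3n/2 Prop S} (with the quadratic term nonnegative since the affine dimension is at least $3$) in the first case, and applies Lemma~\ref{le:not-propertys} with $b^* = 3$ in the second, with the hypothesis $n \geq 5$ serving exactly to guarantee $3 < 2n/3$.
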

\begin{proof}
Let $A$ be the dependency matrix for $\V$. If $A$ satisfies Property-$S$, then we are done by Lemma~\ref{th:3n/2 Prop S}. Otherwise, let $b^* = 3$, and note that Lemma~\ref{le:not-propertys} gives us the statement of the corollary when $n \geq 5$.
\end{proof}

We are now ready to prove Theorem~\ref{th:3n/2}. For convenience, we state the theorem again.
\begin{reptheorem}{th:3n/2}
Let $\V$ be a set of $n \geq 24$ points in $\CC^3$ not contained in a plane. Then $\V$ determines at least $\frac{3}{2}n$ ordinary lines, unless $n-1$ points are on a plane in which case there are at least $n - 1$ ordinary lines.
\end{reptheorem}

\begin{proof}
If $t_2(\V) \geq \frac{3}{2}n$ then we are done. Else, by Corollary~\ref{th:eitheror}, there exists a point $v_1$ incident to at least $\frac{1}{3}(2n - 7)$ ordinary lines and hence, at most $\frac{1}{6}(n + 4)$ special lines. Let $\V_1 = \V \setminus \{ v_1 \}$. If $\V_1$ is planar, then there are exactly $n - 1$ ordinary lines incident to $v_1$. This is the only case where there exists fewer than $\frac{3}{2}n$ ordinary lines.

Suppose now that $\V_1$ is not planar. Again, by Corollary~\ref{th:eitheror}, there are either $\frac{3}{2}(n - 1)$ ordinary lines determined by $\V_1$, or there exists a point $v_2 \in \V_1$ incident to at least $\frac{2}{3}(n - 1) - \frac{7}{3} = \frac{1}{3}(2n - 9)$ ordinary lines (determined by $\V_1$). In the former case, there are $\frac{3}{2}(n - 1)$ ordinary lines determined by $\V_1$, at most $\frac{1}{6}(n + 4)$ of which could contain $v_1$. Then, the total number of ordinary lines in $\V$ is
\[ t_2(\V) \geq \frac{3}{2}(n - 1) -  \frac{1}{6}(n + 4) + \frac{1}{3}(2n - 7) = \frac{1}{2}(4n - 9). \]
When $n \geq 9$, this implies $t_2(\V) \geq \frac{3}{2}n$.

In the latter case there exists a point $v_2 \in \V_1$ with at least $\frac{1}{3}(2n - 9)$ ordinary lines determined by $\V_1$ incident to it. At most one of these could contain $v_1$, so there are at least $\frac{1}{3}(2n - 7) + \frac{1}{3}(2n - 9) - 1 = \frac{1}{3}(4n - 19)$ ordinary lines incident to one of  $v_1$ or $v_2$. Note also that the number of special lines containing one of $v_1$ or $v_2$ is at most $\frac{1}{6} (n + 4) + \frac{1}{6}(n + 3) = \frac{1}{6}(2n + 7)$.

Let $\V_2 = \V_1 \setminus \{v_2\}$. If $\V_2$ is contained in a plane, there are at least $n - 3$ ordinary lines incident to each of $v_1$ and $v_2$ giving a total of $2n - 6$ ordinary lines determined by $\V$. It follows that when $n \geq 12$, $t_2(\V) \geq \frac{3}{2}n.$

Otherwise, $\V_2$ is not contained in a plane, and again Corollary~\ref{th:eitheror} gives two cases. If there are $\frac{3}{2}(n - 2)$ ordinary lines determined by $\V_2$, then the total number of ordinary lines is
\[ t_2(\V) \geq \frac{3}{2} (n - 2) - \frac{1}{6}(2n + 7) + \frac{1}{3}(4n - 19) = \frac{1}{2} (5n - 21). \]
When $n \geq 11$, this implies $t_2(\V) \geq \frac{3}{2}n$.

Otherwise, there exists a point $v_3$ contained in at least $\frac{2}{3}(n - 2) - \frac{7}{3}$ ordinary lines. At most two of these could contain $v_1$ or $v_2$, so there are $\frac{2}{3}(n - 2) - \frac{7}{3} - 2 = \frac{1}{3}(2n - 17)$ ordinary lines incident to $v_3$ determined by $\V$. Summing up the number of lines containing one of  $v_1, v_2$ and $v_3$, we have
\[ t_2(\V) \geq \frac{1}{3}(2n-17) + \frac{1}{3}(4n - 19) = 2n - 12. \]
When $n \geq 24$, this implies $t_2(\V) \geq \frac{3}{2}n$.
\end{proof}

\subsection{Proof of Theorem \ref{th:higherdim}}\label{sec:higherdim}

We get the following easy corollary from Lemma~\ref{th:3n/2 Prop S} and Lemma~\ref{le:not-propertys}.

\begin{corollary}\label{th:eitherorgap1.8}
There exists a positive integer $n_0$ such that the following holds. Let $\V$ be a set of $n \geq n_0$ points in $\CC^d$ not contained in a three dimensional affine subspace. Then at least one of the following holds:
\begin{enumerate}
\item There exists a point contained in at least $\frac{n}{2}$ ordinary lines;
\item $ t_2(\V) \geq \frac{1}{12}n^2$.
\end{enumerate}
\end{corollary}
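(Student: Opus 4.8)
The plan is to run the same Property-$S$ dichotomy used for Corollary~\ref{th:eitheror}, but to exploit the fact that the affine span now has dimension at least $4$, so that the Property-$S$ branch already produces a \emph{quadratic} number of ordinary lines rather than only a linear one. Let $A$ be the dependency matrix of $\V$ and set $d' = \adim(\V)$; since $\V$ is not contained in a three dimensional affine subspace, $d' \geq 4$. The dependency matrix, the collinearity structure, and the identity $\rank(V) = d'+1$ are all intrinsic to $\V$, so I may regard $\V$ as affinely spanning $\CC^{d'}$ with $d' \geq 4$ and apply Lemmas~\ref{th:3n/2 Prop S} and~\ref{le:not-propertys} directly.

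First I would dispose of the case where $A$ satisfies Property-$S$. Here Lemma~\ref{th:3n/2 Prop S} gives
\[ t_2(\V) \geq \frac{d'-3}{2(d'+1)}\,n^2 + \frac{3}{2}n. \]
The coefficient $\frac{d'-3}{2(d'+1)} = \tfrac12\bigl(1 - \tfrac{4}{d'+1}\bigr)$ is increasing in $d'$ and equals $\frac{1}{10}$ at $d'=4$, so for $d' \geq 4$ we obtain $t_2(\V) \geq \frac{1}{10}n^2 \geq \frac{1}{12}n^2$, which is conclusion~2. This is exactly where the hypothesis $d'\geq 4$ does the work: in dimension $3$ the same lemma yields only the linear bound $\frac{3}{2}n$, which is why Corollary~\ref{th:eitheror} could only promise a point with many ordinary lines, whereas here the Property-$S$ branch is already strong enough on its own.

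If instead $A$ fails Property-$S$, I would apply Lemma~\ref{le:not-propertys} with the threshold $b^* = \lceil n/6\rceil$, which lies in the admissible range $1 < b^* < 2n/3$ once $n \geq n_0$. Conclusion~2 of that lemma then gives $t_2(\V) \geq n b^*/2 \geq n^2/12$, matching conclusion~2 of the corollary. Conclusion~1 gives a point lying on at least $\frac{2}{3}(n+1) - b^*$ ordinary lines; since $\lceil n/6\rceil \leq n/6 + \frac{5}{6}$, this count is at least $\frac{n}{2} - \frac{1}{6}$, and because the number of lines through a point is an integer it is in fact at least $n/2$, giving conclusion~1.

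There is no serious obstacle beyond bookkeeping, since the statement is a direct combination of the two lemmas; the only delicate points are (i) verifying the Property-$S$ branch clears $\frac{1}{12}n^2$ rather than merely $\frac{3}{2}n$, which is precisely the payoff of $d'\geq 4$, and (ii) choosing $b^*$ so that the two conclusions of Lemma~\ref{le:not-propertys} land on the two conclusions of the corollary simultaneously. The tension in (ii) is that the admissible window for $b^*$ meeting both targets has width less than $1$, so one must lean on the integrality of the ordinary-line count to absorb the $\tfrac16$ rounding loss in conclusion~1; fixing $n_0$ large enough to keep $b^*$ admissible and to validate these estimates completes the argument.
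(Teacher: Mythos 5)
Your proposal is correct and follows essentially the same route as the paper: the Property-$S$ dichotomy, with Lemma~\ref{th:3n/2 Prop S} (exploiting $\adim(\V)\geq 4$) in one branch and Lemma~\ref{le:not-propertys} with $b^*\approx n/6$ in the other. In fact you are slightly more careful than the paper, which takes $b^*=n/6$ without addressing integrality; your use of $b^*=\lceil n/6\rceil$ plus the integrality of the ordinary-line count cleanly patches that minor gap.
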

\begin{proof}
Let $A$ be the dependency matrix of $\V$. If $A$ satisfies Property-$S$, then we are done by Lemma~\ref{th:3n/2 Prop S}. Otherwise, let $b^* = n/6$. Now by Lemma~\ref{le:not-propertys}, either the number of ordinary lines
\[t_2(\V) \geq \frac{n}{2}b^* \geq \frac{1}{12}n^2, \]
or, there exists a point $v \in \V$, such that the number of ordinary lines containing $v$ is at least
\[ \frac{2}{3}(n+1) - b^* > \frac{1}{2}n. \]
\end{proof}

We are now ready to prove Theorem~\ref{th:higherdim}. For convenience, we state the theorem again.
\begin{reptheorem}{th:higherdim}
There exists an absolute constant $c^\prime > 0$ and a positive integer $n_0$ such that the following holds. Let  $\V$ be a set of $n \geq n_0$ points in $\CC^4$ with at most $\frac{1}{2} n$ points contained in any three dimensional affine subspace. Then
\[ t_2(\V) \geq c^\prime n^2. \]
\end{reptheorem}
\begin{proof}
The basic idea of the proof uses the following algorithm: We use Corollary~\ref{th:eitherorgap1.8} to find a point incident to a large number of ordinary lines,  ``prune'' this point, and then repeat this on the smaller set of points. We stop when either we can not find such a point, in which case Corollary~\ref{th:eitherorgap1.8} guarantees a large number of ordinary lines, or when we have accumulated enough ordinary lines.

Consider the following algorithm:

Let $\V_0 := \V$ and $j = 0$.
\begin{enumerate}
\item If $\V_j$ satisfies case (2) of Corollary~\ref{th:eitherorgap1.8}, then stop.
\item Otherwise, there must exist a point $v_{j+1} \in \V_j$ incident to at least $\frac{n - j}{2}$ ordinary lines determined by $\V_j$. Let $\V_{j+1} = \V_j \setminus \{ v_{j+1} \}$.
\item Set $j = j + 1$. If $j \geq n/2$, then stop. Otherwise, go to Step 1.
\end{enumerate}

Since no three dimensional subspace contains more than $n/2$ points, at no point will the algorithm stop because the configuration becomes three dimensional. That is, we can use Corollary~\ref{th:eitherorgap1.8} at every step of the algorithm.

We now analyze the two stopping conditions for the algorithm, and show that we can always find enough ordinary lines by the time the algorithm stops.

Suppose that we stop because $\V_j$ satisfies case (2) of Corollary~\ref{th:eitherorgap1.8} for some $1 \leq j < n/2$.  Case (2) of Corollary~\ref{th:eitherorgap1.8}  implies 
\begin{align}
\label{eq:ordinaryvj1.8} t_2(\V_j)\geq \frac{(n-j)^2}{12}.
\end{align}
On the other hand, each pruned point $v_i$, $1 \leq i \leq j$, is incident to at least $\frac{n - i + 1}{2} > \frac{n - i}{2}$ ordinary lines determined by $\V_{i-1}$, and hence, at most $(n - i - \frac{n - i + 1}{2})/2 < \frac{n - i}{4}$ special lines. An ordinary line in $\V_i$ might not be ordinary in $\V_{i-1}$ if it contains $v_{i}$. Thus, in order to lower bound the total number of ordinary lines in $\V$,
we sum over the number of ordinary lines contributed by each of the pruned points $v_{i}$,
$1 \leq i \leq j$, and subtract from the count the number of potential lines that could contain $v_{i}$.  Then the number of ordinary lines in $\V$ contributed by the pruned points is at least
\begin{align}
\label{eq:ordinarypruning1.8}  \sum_{i=1}^j \left(\frac{n - i}{2} - \frac{n - i}{4} \right) = \frac{1}{4}\sum_{i=1}^j \left(n - i\right).
\end{align}
Combining (\ref{eq:ordinaryvj1.8}) and (\ref{eq:ordinarypruning1.8}) gives 
\begin{align*}
t_2(\V) & \geq \frac{1}{12}(n-j)^2 + \frac{1}{4}\sum_{i=1}^j \left(n - i\right) \\
& = \frac{1}{24}\left(- j^2 + j(2n - 3) + 2n^2\right).
\end{align*}
This is an increasing function for $j < n-3/2$, i.e., $j \leq n-2$ implying that $$t_2(\V)\geq \frac{n^2}{12}.$$

We now consider the case when the algorithm stops because $j \geq n/2$. Note that at this point, we will have pruned exactly $j$ points. Each pruned point $v_i$, $1 \leq i \leq j$, is incident to at least $\frac{n - i + 1}{2} > \frac{n - i}{2}$ ordinary lines determined by $\V_{i-1}$. The only way such an ordinary line is not ordinary in $\V$ is if it contains one of the previously pruned points. At most $i - 1 < i$ of the ordinary lines incident to $v_i$ contain other pruned points $v_k$, $k < i$. Therefore the total number of ordinary lines determined by $\V$ satisfies
\begin{align*}
t_2(\V) \geq \sum_{i=1}^j \frac{n - i}{2} - \sum_{i = 1}^j i = \frac{1}{2} \sum_{i=1}^j (n - 3i) \geq \frac{n^2 - 8n - 9}{16}.
\end{align*}
This gives us that for some absolute constant $c^\prime > 0$ and $n$ large enough, $$ t_2(\V) \geq c^\prime n^2. $$
\end{proof}

\section{A dependency matrix for a more refined bound}\label{sec:dependencymatrix}
In this section we give a refined construction for the dependency matrix of a point set $\V$. Recall that we defined the dependency matrix in Definition~\ref{def:dependency} to contain a row for each collinear triple from a triple system constructed on each special line. The goal was to not have too many triples containing the same pair (as can happen when there are many points on a single line). At the end of this section (Definition~\ref{def:dependency-second}) we will give a construction of a dependency matrix that will have an additional property (captured in Item 4 of Lemma~\ref{le:dmatrix-line}) which is used to obtain cancellation in the diagonal dominant argument, as outlined in the proof overview. 

We denote the argument of a non-zero complex number $z$ by $\arg{z}$, and use the convention that $\arg{z} \in (-\pi, \pi]$. 

\begin{definition}[angle between two complex numbers]
We define the {\em{angle between two non-zero complex numbers $a$ and $b$}} to be the the absolute value of the argument of $a\overline{b}$, denoted by $\left|\arg{a\overline{b}}\right|$.  Note that the angle between $a$ and $b$ equals the angle between $b$ and $a.$
\end{definition}

\begin{definition}[co-factor]
Let $v_1, v_2$ and $v_3$ be three distinct collinear points in $\CC^d$, and let $a_1, a_2$ and $a_3$ be the linear dependency coefficients among the three points.
Define the {\em co-factor} of $v_3$ with respect to $(v_1, v_2)$, denoted by $C_{1, 2}(3)$, to be $\frac{a_1\overline{a_2}}{|a_1||a_2|}$. Notice that this is well defined with respect to the points, and does not depend on the choice of coefficients.
\end{definition}

The next  lemma  will be used to show that ``cancellations" must arise in a line containing four points (as mentioned earlier in the proof overview). We will later use this lemma as a black box to quantify the cancellations in lines with more than four points by applying it to random four tuples inside the line.
\begin{lemma}
\label{le:coefficient-angle}
Let $v_1, v_2, v_3, v_4$ be four collinear points in $\CC^d$. Then at least one of the following holds:
\begin{enumerate}
\item The angle between $C_{1, 2}(3)$ and $C_{1, 2}(4)$ is at least $\pi/3$;
\item The angle between $C_{1, 3}(4)$ and $C_{1, 3}(2)$ is at least $\pi/3$;
\item The angle between $C_{1, 4}(2)$ and $C_{1, 4}(3)$ is at least $\pi/3$.
\end{enumerate}
\end{lemma}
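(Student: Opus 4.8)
Since $v_1,v_2,v_3,v_4$ are collinear, I can parametrize them on the common line. Write each point as $v_i = p + \lambda_i w$ for a base point $p$, direction $w$, and scalars $\lambda_i \in \CC$ (after an affine reparametrization I may even send $v_1,v_2$ to $0$ and $1$, so that $\lambda_1 = 0$, $\lambda_2 = 1$, and $\lambda_3,\lambda_4$ are the two ``cross-ratio-like'' parameters). The linear dependency coefficients among a collinear triple are, up to scaling, determined by these parameters: if $a_1 v_1' + a_2 v_2' + a_3 v_3' = 0$ with the appended $1$'s, then the $a_i$ are proportional to the barycentric-type weights, i.e. $a_1 : a_2 : a_3 = (\lambda_2 - \lambda_3) : (\lambda_3 - \lambda_1) : (\lambda_1 - \lambda_2)$. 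The plan is to express each co-factor $C_{(i,j)}(k)$ explicitly as a phase $\frac{a_i \overline{a_j}}{|a_i||a_j|}$ in terms of differences of the $\lambda$'s, reducing the whole lemma to a statement purely about the arguments of products of complex numbers $\lambda_i - \lambda_j$.

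**Reformulating as an angle identity.**
Let me abbreviate $d_{ij} = \lambda_i - \lambda_j$. After computing, each co-factor $C_{(i,j)}(k)$ is the unit complex number in the direction of a product/ratio of two such differences; concretely $C_{(1,2)}(3) \propto \overline{d_{23}}\, d_{13}$ (up to positive real scalar and conjugation bookkeeping), and similarly for the others. The angle between $C_{(1,2)}(3)$ and $C_{(1,2)}(4)$ then equals $|\arg(\,\cdot\,)|$ of a cross-ratio-type quantity built from $d_{13},d_{23},d_{14},d_{24}$. The key observation is that the three candidate angles in the three items correspond to the three arguments $\arg(\chi)$, $\arg$ of a related cross-ratio, and its complement, which satisfy an \emph{additive} relation modulo $2\pi$: the three relevant signed angles sum to a fixed multiple of $\pi$. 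Once I have that the three signed angles $\theta_1,\theta_2,\theta_3$ satisfy $\theta_1 + \theta_2 + \theta_3 \equiv \pi \pmod{2\pi}$ (or a similarly rigid identity), a pigeonhole argument forces at least one $|\theta_i| \geq \pi/3$: three real numbers summing to (a representative of) $\pi$ in $(-\pi,\pi]$ cannot all have absolute value below $\pi/3$, since then their sum would lie in $(-\pi,\pi)$ with absolute value below $\pi$, contradicting that the sum is $\pi$.

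**The main obstacle.**
The delicate part is establishing the rigid additive relation among the three angles and making the pigeonhole clean. This requires (i) carefully fixing the branch conventions for $\arg$ (the paper fixes $\arg(z)\in(-\pi,\pi]$), (ii) verifying that the three co-factor-angle expressions genuinely assemble into a sum of arguments of complex numbers whose \emph{product} is a real quantity of known sign — this is where the cross-ratio identity $\chi + (1-\chi)^{-1}\cdot(\cdots) = \text{const}$ or, more simply, the fact that the three relevant cross-ratios multiply to $\pm 1$, must be checked — and (iii) handling the case where some angle is computed as an argument of a negative real (boundary of the branch cut). I expect the cleanest route is: show the three unit complex numbers whose arguments are the three target angles have product equal to a \emph{negative} real (so their arguments sum to $\pi \bmod 2\pi$), then invoke the pigeonhole bound $\max_i |\theta_i| \geq \pi/3$.

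**Verifying the pigeonhole.**
Suppose for contradiction all three angles were strictly less than $\pi/3$. Each target angle is $|\arg(u_i)|$ for a unit complex number $u_i$, and the product $u_1 u_2 u_3$ is a negative real, so $\arg(u_1)+\arg(u_2)+\arg(u_3) \equiv \pi \pmod{2\pi}$. If every $|\arg(u_i)| < \pi/3$, then $|\arg(u_1)+\arg(u_2)+\arg(u_3)| < \pi$, so the sum lies strictly inside $(-\pi,\pi)$ and cannot be congruent to $\pi$ modulo $2\pi$ — the only representative in that open interval would have to be a value whose distance to the nearest odd multiple of $\pi$ is nonzero, a contradiction. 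Hence at least one $|\arg(u_i)| \geq \pi/3$, which is exactly one of the three stated alternatives. The remaining work is the bookkeeping in the first two paragraphs to confirm that the three $u_i$ are precisely the phases governing the three co-factor angles and that their product is indeed a negative real; this is a finite explicit computation with the $d_{ij}$, which I would carry out directly.
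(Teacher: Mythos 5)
Your proposal takes essentially the same route as the paper's proof: the paper likewise reduces the three co-factor angles to the absolute arguments of three complex numbers whose product is a negative real number (derived by eliminating between the dependency equations for the triples $(1,2,3)$ and $(1,2,4)$, rather than by an affine parametrization of the line), and then applies the same pigeonhole argument. The bookkeeping you deferred does check out --- with $\lambda_1=0$, $\lambda_2=1$ the three relevant phases multiply to $-|1-\lambda_3|^2\,|\lambda_3|^2\,|1-\lambda_4|^2\,|\lambda_4|^2\,|\lambda_3-\lambda_4|^2<0$ --- and your signed-argument treatment of the pigeonhole step is in fact slightly more careful than the paper's assertion that the three unsigned angles sum to exactly $\pi$.
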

\begin{proof}

For $i \in \{1, 2, 3, 4\}$, let $v_i^\prime = (v_i, 1)$, i.e., the vector obtained by appending 1 to $v_i$. Since $v_1, v_2, v_3, v_4$ are collinear, there exist $a_1, a_2, a_3 \in \CC$ such that
\begin{align}\label{eq:123}
 a_1 v^\prime_1 + a_2 v^\prime_2 +  a_3v^\prime_3 = 0,
\end{align}
and  $b_1, b_2, b_4 \in \CC$ such that
\begin{align} \label{eq:124}
 b_1 v^\prime_1 + b_2 v^\prime_2 +  b_4v^\prime_4 = 0.
\end{align}
We may assume, without loss of generality, that $a_3 = b_4 = 1$. Now equations \eqref{eq:123} and \eqref{eq:124} give us that
$C_{1,2}(3)=\frac{a_1\overline{a_2}}{|a_1||a_2|}$,
$C_{1,2}(4)=\frac{b_1\overline{b_2}}{|b_1||b_2|}$,
$C_{1,3}(2)=\frac{a_1}{|a_1|} $
and $C_{1,4}(2)=\frac{b_1}{|b_1|}.$

Combining equations \eqref{eq:123} and \eqref{eq:124}, we get the following linear equation:
\begin{align}\label{eq:134}
(b_2a_1 - b_1a_2) v^\prime_1 +b_2 v^\prime_3 -a_2  v^\prime_4 = 0. 
\end{align}
From (\ref{eq:134}), we get
$C_{1,3}(4)=\frac{(b_2a_1 - b_1a_2)\overline{b_2}}{|b_2a_1 - b_1a_2||b_2|}$
and $C_{1,4}(3)=-\frac{(b_2a_1 - b_1a_2)\overline{a_2}}{|b_2a_1 - b_1a_2||a_2|}.$

Then the angle between $C_{1, 2}(3)$ and $C_{1, 2}(4)$ is
\begin{align}
 \nonumber & \left|\arg{ \frac{a_1\overline{a_2}}{|a_1||a_2|} \frac{\overline{b_1}b_2}{|b_1||b_2|}}\right|\\
\label{eq:angle1} =\,\,&\left|\arg{ a_1\overline{a_2} \overline{b_1}b_2 }\right|.
\end{align}
The angle between $C_{1, 3}(4)$ and $C_{1, 3}(2)$ is
\begin{align}
\nonumber & \left|\arg{\frac{(b_2a_1 - b_1a_2)\overline{b_2}}{|b_2a_1 - b_1a_2||b_2|}\frac{\overline{a_1}}{|a_1|}}\right|  \\ 
\label{eq:angle2}  = \,\,& \left|\arg{\overline{ a_1 }\overline{b_2}(b_2a_1 - b_1a_2) }\right|.
\end{align}
The angle between $C_{1, 4}(2)$ and $C_{1, 4}(3)$ is
\begin{align}
\nonumber & \left|\arg{-\frac{b_1}{|b_1|}\frac{\overline{(b_2a_1 - b_1a_2)}a_2}{|b_2a_1 - b_1a_2||a_2|}}\right| \\
\label{eq:angle3}  =\,\, & \left|\arg{ -b_1a_2\overline{(b_2a_1 - b_1a_2)} }\right|.
\end{align}

Note that the product of expressions inside the arg functions in  (\ref{eq:angle1}), (\ref{eq:angle2}) and (\ref{eq:angle3}) is a negative real number, and so the sum of (\ref{eq:angle1}), (\ref{eq:angle2}) and (\ref{eq:angle3}) must be $\pi$. It follows that one of the angles must be at least $\pi/3$.
\end{proof}

Our final dependency matrix will be composed of blocks, each given by the following lemma. Roughly speaking, we construct a block  of rows $A(l)$ for each special line $l$. The rows in $A(l)$ will be chosen carefully and will correspond to triples that will eventually give non trivial cancellations.

\begin{lemma}
\label{le:dmatrix-line}
Let $l$ be a line in $\CC^d$ and $\V_l = \{v_1, \dots v_r\}$ be points on $l$ with $r \geq 3$. Let $V_l$ be the $r \times (d+1)$ matrix whose $i^{th}$ row is the vector $(v_i, 1)$. Then there exists an $(r^2 - r) \times r$ matrix $A = A(l)$, which we refer to as the {\em dependency matrix} of $l$, such that the following hold:
\begin{enumerate}
\item $A V_l = 0$;
\item Every row of $A$ has support of size three;
\item The support of every two columns of $A$ intersects in exactly six locations;
\item If $r\geq 4$ then for at least $1/3$ of choices of $k \in [r^2-r]$, there exists $k^\prime \in [r^2-r]$ such that following holds: For $k \in [r^2-r]$, let $R_k$ denote the $k^{th}$ row of $A$; Suppose $\supp(R_k) = \{i, j, s\}$. Then $\supp(R_{k^\prime}) = \{i, j, t\}$ (for some $t \neq s$) and the angle between the co-factors $C_{i,j}(s)$ and $C_{i,j}(t)$ is at least $\pi/3$.
\end{enumerate}
\end{lemma}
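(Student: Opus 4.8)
The plan is to realize $A(l)$ as the block of the dependency matrix associated with the special line $l$: I fix a bijection between $\V_l$ and $[r]$, take the triple system $T\subseteq[r]^3$ produced by Lemma~\ref{le:triplesystem} (so $|T|=r^2-r$), and fill each row of $A$ with the linear dependency coefficients of the collinear triple $\{v_i,v_j,v_k\}$ indexed by $(i,j,k)\in T$. With this definition Items~1--3 are immediate and do not depend on the bijection: Item~1 holds because each row is a genuine dependency $a_iv_i'+a_jv_j'+a_kv_k'=0$, so $AV_l=0$; Item~2 holds because every triple has three distinct entries (Property~1 of Lemma~\ref{le:triplesystem}); and Item~3 holds because every index pair lies in exactly $6$ triples (Property~2), so the supports of any two columns meet in exactly $6$ rows. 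Since Item~4 is asserted only for $r\geq4$, the entire content of the lemma is Item~4, and I would choose the bijection carefully to secure it.

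For Item~4 I would organize the rows into \emph{matched pairs}. The $r^2-r$ rows are indexed by ordered pairs $(i,j)$, and I pair $(i,j)$ with $(j,i)$: the rows from $(i,j,L_{ij})$ and $(j,i,L_{ji})$ share the support-pair $\{i,j\}$ and, because the underlying Latin square satisfies $L_{ij}\neq L_{ji}$ (this is exactly what underlies Property~3 of Lemma~\ref{le:triplesystem}), they have distinct thirds $s=L_{ij}$ and $t=L_{ji}$. Call $\{i,j\}$ \emph{wide} if the angle between $C_{(i,j)}(s)$ and $C_{(i,j)}(t)$ is at least $\pi/3$; then the two rows of a wide matched pair are each other's partner $k,k'$ as required. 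The $\binom{r}{2}$ matched pairs partition all rows into disjoint blocks, so it suffices to make at least $\frac{1}{3}\binom{r}{2}$ of them wide: this yields $\geq\frac{2}{3}\binom{r}{2}=\frac{1}{3}(r^2-r)$ good rows.

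To count wide matched pairs I would use two ingredients. First, a global count of \emph{wide configurations}: for a point-pair $\{p,q\}$ and two further distinct points $\{s,t\}$, call $(\{p,q\};\{s,t\})$ wide if the angle between $C_{(p,q)}(s)$ and $C_{(p,q)}(t)$ is at least $\pi/3$. Applying Lemma~\ref{le:coefficient-angle} to each $4$-subset $Q\subseteq\V_l$ with each distinguished point $w\in Q$ shows that among the three $w$-centred configurations at least one is wide; there are $4\binom{r}{4}$ such instances $(Q,w)$, and a fixed wide configuration $(\{p,q\};\{s,t\})$ can be charged only by $(Q,w)=(\{p,q,s,t\},p)$ or $(\{p,q,s,t\},q)$. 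Hence the number of wide configurations is at least $2\binom{r}{4}$; writing $W_{pq}$ for the number of wide configurations based at $\{p,q\}$, this says $\sum_{\{p,q\}}W_{pq}\geq 2\binom{r}{4}$.

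Second, I would randomize the bijection between $\V_l$ and $[r]$, which is legitimate since Items~1--3 hold for every bijection. Choosing it uniformly at random, for a fixed point-pair $\{p,q\}$ the attached pair of thirds $\{L_{ij},L_{ji}\}$ (with $\{i,j\}=\sigma^{-1}\{p,q\}$) becomes a uniformly random $2$-subset of $\V_l\setminus\{p,q\}$, because $L_{ij},L_{ji}$ are two fixed distinct symbols different from $i,j$ and, conditioned on $\{p,q\}$, the labels of the remaining $r-2$ points are uniform. Thus $\Pr[\{p,q\}\text{ wide}]=W_{pq}/\binom{r-2}{2}$, and by linearity the expected number of wide matched pairs is $\sum_{\{p,q\}}W_{pq}/\binom{r-2}{2}\geq 2\binom{r}{4}/\binom{r-2}{2}=\frac{1}{3}\binom{r}{2}$; some bijection attains this, and building $A(l)$ from it gives Item~4. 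The step I expect to be the crux is precisely this passage from the \emph{local} guarantee of Lemma~\ref{le:coefficient-angle} (one wide configuration per $4$-tuple) to a \emph{global} fraction of wide matched pairs: the averaging over labelings is what makes the matched thirds uniform and lets the $4$-subset double-count feed through, and the constant $1/3$ it produces is essentially sharp, matching the real configuration in which exactly one third of all configurations are wide.
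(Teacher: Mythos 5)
Your proposal is correct and takes essentially the same route as the paper: the identical Latin-square triple-system construction disposes of Items 1--3, and Item 4 is proved by choosing the bijection $\V_l \to [r]$ uniformly at random and converting the ``one of three pairings is wide'' guarantee of Lemma~\ref{le:coefficient-angle} into an expected $1/3$ fraction of good rows via linearity of expectation. The only difference is bookkeeping: the paper argues row by row, conditioning on the four points involved and using symmetry of the random labeling to get $\Pr(R_k \mbox{ good}) \geq 1/3$, whereas you first double-count wide configurations over $4$-subsets (getting at least $2\binom{r}{4}$ of the $6\binom{r}{4}$) and then use uniformity of the matched pair of thirds $\{L_{ij}, L_{ji}\}$ --- a cleaner, more explicit rendering of the same expectation computation.
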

\begin{proof}
Recall that Lemma~\ref{le:triplesystem} gives us a family of triples $T_r$ on the set $[r]^3$.  For every bijective map $\sigma : \V_l \rightarrow [r]$, construct a matrix $A_\sigma$ in the following manner: Let $T_l$ be the triple system on $\V_l^3$ induced by $\sigma$ and $T_r$. For each triple $(v_i, v_j, v_k) \in T_l$, add a  row with three non-zero entries in positions $i, j, k$ corresponding to the linear dependency coefficients between $v_i, v_j$ and $v_k$. 

Note that for every $\sigma$, $A_\sigma$ has $r^2 - r$ rows and $r$ columns. Since the rows correspond to linear dependency coefficients, clearly we have $A_\sigma V_l = 0$ satisfying Property 1. Properties 2 and 3 follow from properties of the triple system from Lemma~\ref{le:triplesystem}.

We will use a probabilistic argument to show that there exists a matrix $A$ that has Property $4$. Let $\Sigma$ be the collection of all bijective maps from $[r]$ to the points $\V_l$, and let $\sigma \in \Sigma$ be a uniformly random element. Consider $A_\sigma$.
Since every pair of points occurs in at least two distinct triples, for every row $R_k$ of $A_\sigma$, there exists a row $R_{k^\prime}$ such that the supports of $R_{k}$ and $R_{k^\prime}$ intersect in two entries. Suppose that $R_k$ and $R_{k^\prime}$ have supports contained in $\{ i , j, s, t\}$.  
Suppose that $\sigma$ maps $\{ v_i, v_j, v_s, v_t\}$ to $\{1, 2, 3, 4\}$ and that $(1,2,3)$ and $(1,2,4)$ are triples in $T_r.$  Without loss of generality, assume $v_i$ maps to 1. Then by Lemma~\ref{le:coefficient-angle}, the angle between at least one of the pairs $\{ C_{i, j}(s), C_{i,j}(t) \}$, $\{ C_{i, s}(j), C_{i, s}(t) \}$, $\{ C_{i, t}(j), C_{i, t}(s) \}$ must be at least $\pi/3$.  That is, given that $v_i$ maps to $1$, we have that the probability that $R_k$ satisfies Property 4 is at least $1/3$. Then it is easy to see that
\[ \Pr(R_k \mbox{ satisfies Property }4) \geq 1/3. \]

Define the random variable $X$ to be the number of rows satisfying Property 4, and note that we have \[\mathbb{E}[X] \geq (r^2 - r)\frac{1}{3}. \]
It follows that there exists a matrix $A$ in which at least $1/3$ of the rows satisfy Property 4.
\end{proof}

To argue about the off-diagonal entries of $M$, we will use the following notion of balanced rows. The main idea here is that, if there are many rows that are not balanced then we win in one of the Cauchy-Schwartz applications and, if many rows are balanced then we win from cancellations that show up via the different angles.

\begin{definition}[$\eta$-balanced row]
Given an $m\times n$ matrix $A$, we say a row $R_k$ is $\eta$-balanced for some constant $\eta$ if  $\left||A_{ki}|^2 - |A_{kj}|^2\right| \leq \eta$, for every $i, j \in \supp(R_k)$. Otherwise, we say that $R_k$ is $\eta$-unbalanced. When $\eta$ is clear from the context, we say that the row is balanced/unbalanced.
\end{definition}

\begin{lemma}
\label{le:squares-bound}
There exists an absolute constant $c_0 > 0$ such that the following holds. Let $l$ be a line in $\CC^d$ and $\V_l = \{v_1, \dots v_r\}$ be points on $l$ with $r \geq 4$. Let $A = A(l)$ be the dependency matrix for $l$, defined in Lemma~\ref{le:dmatrix-line}, and $A^\prime$ a scaling of $A$ such that the $\ell_2$ norm of every row is $\alpha$. Let $M = A^{\prime*}A^\prime$. 
\[ \sum_{i \neq j} |M_{ij}|^2 \leq 4(r^2 - r)\alpha^4 - c_0 (r^2 - r)  \alpha^2. \]
\end{lemma}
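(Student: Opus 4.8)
The plan is to bound $\sum_{i\neq j}|M_{ij}|^2$ by starting from the exact identity of Lemma~\ref{le:offdiagonalsum}, specialized to $A = A(l)$, which gives
\[ \sum_{i \neq j} |M_{ij}|^2 = \left(1 - \tfrac{1}{q}\right) t m \alpha^4 - \left(D(A') + \tfrac{t}{q}E(A')\right), \]
where by Lemma~\ref{le:dmatrix-line} we have $q = 3$, $t = 6$, and $m = r^2 - r$. Substituting these, the leading term becomes $\left(1 - \tfrac13\right)\cdot 6 \cdot (r^2-r)\alpha^4 = 4(r^2-r)\alpha^4$, which is exactly the first term of the target bound. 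So the entire task reduces to showing that $D(A') + \tfrac{t}{q}E(A') \geq c_0(r^2-r)\alpha^2$ for some absolute constant $c_0$; since both $D$ and $E$ are non-negative, it suffices to produce a lower bound of the right order from \emph{either} quantity.

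The dichotomy driving the argument is the notion of an $\eta$-balanced row, for a suitable absolute constant $\eta$ to be fixed. First I would split the rows of $A'$ into balanced and unbalanced rows. \textbf{Case of many unbalanced rows:} each $\eta$-unbalanced row $R_k$ has some pair $i,j \in \supp(R_k)$ with $\bigl||A'_{ki}|^2 - |A'_{kj}|^2\bigr| > \eta$, so it contributes more than $\eta^2$ to the inner sum of $E(A')$. If a constant fraction of the $r^2-r$ rows are unbalanced, then $E(A') = \Omega(\eta^2(r^2-r))$, and since the scaling fixes each row norm at $\alpha$ (so the relevant entries have magnitude bounded in terms of $\alpha$), after matching powers this already yields $\tfrac{t}{q}E(A') \geq c_0(r^2-r)\alpha^2$. \textbf{Case of many balanced rows:} here I invoke Property~4 of Lemma~\ref{le:dmatrix-line}. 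For at least $1/3$ of the rows $R_k$ with $\supp(R_k) = \{i,j,s\}$ there is a partner row $R_{k'}$ with $\supp(R_{k'}) = \{i,j,t\}$ such that the angle between the co-factors $C_{(i,j)}(s)$ and $C_{(i,j)}(t)$ is at least $\pi/3$. This angular separation forces the two terms $A'_{ki}\overline{A'_{kj}}$ and $A'_{k'i}\overline{A'_{k'j}}$ appearing inside $D(A')$ to be non-parallel complex numbers; for a balanced pair the two magnitudes $|A'_{ki}||A'_{kj}|$ and $|A'_{k'i}||A'_{k'j}|$ are both $\Theta(\alpha^2)$ and comparable to each other, so the squared difference $\bigl|A'_{ki}\overline{A'_{kj}} - A'_{k'i}\overline{A'_{k'j}}\bigr|^2$ is bounded below by a constant multiple of $\alpha^4$ (quantitatively, $|z - w|^2 \geq |z|^2 + |w|^2 - 2|z||w|\cos(\pi/3) = |z|^2 + |w|^2 - |z||w|$, which is $\Omega(\alpha^4)$ when $|z|,|w| = \Theta(\alpha^2)$). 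Summing this constant-size contribution over the $\Omega(r^2-r)$ qualifying balanced rows gives $D(A') = \Omega((r^2-r)\alpha^4)$; but wait — I need the lower bound to scale as $\alpha^2$, not $\alpha^4$, so I should be careful about which power of $\alpha$ the scaling normalization actually produces, and I would track the row-norm constraint to land on the stated $\alpha^2$ scaling.

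The main obstacle will be the interface between the balanced and unbalanced cases and making the magnitudes uniform. In the balanced case the angle gives a genuine cancellation only when the two co-factor \emph{magnitudes} are comparable and bounded away from zero; the balancedness condition controls comparability within a row, but I must also ensure the relevant entries are not vanishingly small relative to $\alpha$ — this is precisely where the unbalanced-row case is needed as a complementary fallback, since an entry much smaller than $\alpha$ signals an unbalanced row that feeds $E(A')$ instead. The constant $\eta$ must be chosen small enough that a balanced row has all three support-entries of magnitude $\Theta(\alpha)$ (so that co-factor products are $\Theta(\alpha^2)$), yet the threshold still leaves a constant fraction of rows in whichever case we exploit. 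I expect the cleanest route is to fix $\eta$ as an absolute constant, argue that whichever case holds contributes $\Omega((r^2-r)\alpha^2)$ to $D(A') + \tfrac{t}{q}E(A')$, and set $c_0$ to be the minimum of the two resulting constants. The careful bookkeeping of these absolute constants, rather than any single deep idea, is the delicate part.
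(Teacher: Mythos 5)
Your proposal is correct and follows essentially the same route as the paper's own proof: the exact identity of Lemma~\ref{le:offdiagonalsum} with $q=3$, $t=6$, $m=r^2-r$, followed by the $\eta$-balanced/unbalanced dichotomy over the Property~4 rows, in which unbalanced rows feed $E(A')$ and balanced rows with an angle-$\geq\pi/3$ partner feed $D(A')$ via $|z-w|^2 \geq |z|^2+|w|^2-|z||w| \geq \tfrac{3}{4}|z|^2$, with $c_0$ taken as the worse of the two constants. The $\alpha^2$-versus-$\alpha^4$ tension you flagged at the end is not a gap in your argument but an inconsistency in the paper itself: its proof writes $\sum_i |A_{ki}|^2 = \alpha$ (treating $\alpha$ as the \emph{squared} row norm) and sets $\eta = \alpha/10$ on that convention, which is how the stated $c_0(r^2-r)\alpha^2$ arises; the dimensionally consistent subtracted term is $\Omega\left((r^2-r)\alpha^4\right)$, exactly what your bookkeeping produces (with $\eta$ proportional to $\alpha^2$), and since the lemma is only ever applied with $\alpha = 1+\epsilon \to 1$, the discrepancy is immaterial there.
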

\begin{proof}
Recall that $A$ is an $(r^2-r) \times r$ matrix, that the support of every row has size exactly three, and that the supports of any two distinct columns of $A$ intersects in six locations. Clearly, any scaling $A^\prime$ of $A$ will also satisfy these properties. Applying Lemma~\ref{le:offdiagonalsum} to $A^\prime$ gives
\begin{equation}
\label{eq:sumsq}
\sum_{i \neq j}  |M_{ij}|^2  =  4(r^2 - r)\alpha^4 - \left(D(A^\prime) + 2E(A^\prime)\right).
\end{equation}

We are able to give a lower bound on $D(A^\prime) + 2E(A^\prime)$ using Property 4 of Lemma~\ref{le:dmatrix-line}. From here on, we focus on the rows mentioned in Property 4. Recall that there are at least $(r^2 - r)/3$ such rows. For some $\eta$ to be determined later, suppose that $\beta$ fraction of these rows is $\eta$-unbalanced. We will show each such row contributes to either $D(A^\prime)$ or $E(A^\prime)$.

If a row $R_k$ is $\eta$-imbalanced, we have \[ \sum_{i < j}\left( |A^\prime_{ki}|^2 - |A^\prime_{kj}|^2\right)^2 > \eta^2. \]
Alternatively suppose that $R_k$ is $\eta$-balanced. Recall that $\sum_{i = 1}^{n} |A^\prime_{ki}|^2 = \alpha,$ and note that we must have that $|A^\prime_{ki}|^2 \in [\frac{\alpha}{3} - \frac{2\eta}{3}, \frac{\alpha}{3} + \frac{2\eta}{3}]$ for all $i \in \supp(R_k)$. Suppose that both $R_k$ and $R_{k^\prime}$ have non-zero entries in columns $i$ and $j$, but $R_k$ has a third nonzero entry in column $s$ and $R_{k^\prime}$ has a third nonzero entry in column $t$, where $s\neq t$. Suppose further that the angle $\theta$ between the co-factors $C_{i,j}(s)$ and $C_{i,j}(t)$ is at least $\pi/3$, i.e., $\cos \theta \leq 1/2$. Then
\begin{align*}
& \left| A^\prime_{ki} \overline{A^\prime_{kj}} - A^\prime_{k^\prime i} \overline{A^\prime_{k^\prime j}} \right|^2 \\
= \,\,& |A^\prime_{ki} \overline{A^\prime_{kj}} |^2 + |A^\prime_{k^\prime i} \overline{A^\prime_{k^\prime j}} |^2 - 2 |A^\prime_{ki} \overline{A^\prime_{kj}} ||A^\prime_{k^\prime i} \overline{A^\prime_{k^\prime j}} | \cos \theta \\
\geq \,\,& |A^\prime_{ki} \overline{A^\prime_{kj}} |^2 + |A^\prime_{k^\prime i} \overline{A^\prime_{k^\prime j}} |^2 - |A^\prime_{ki} \overline{A^\prime_{kj}} ||A^\prime_{k^\prime i}\overline{A^\prime_{k^\prime j}} |.
\end{align*}
Any positive real numbers $a,b$ satisfy
\begin{equation*}
a^2+b^2 - ab = 
\left(\frac{a}{2} - b\right)^2 + \frac{3}{4}a^2   \geq  \frac{3}{4}a^2.
\end{equation*}
 Substituting $a=|A^\prime_{ki} \overline{A^\prime_{kj}} |$ and $b= |A^\prime_{k^\prime i} \overline{A^\prime_{k^\prime j}}| $ gives
\begin{align*}
& |A^\prime_{ki} \overline{A^\prime_{kj}} |^2 + |A^\prime_{k^\prime i} \overline{A^\prime_{k^\prime j}} |^2 - |A^\prime_{ki} \overline{A^\prime_{kj}} ||A^\prime_{k^\prime i} \overline{A^\prime_{k^\prime j}} | \\
\geq \,\,& \frac{3}{4}|A^\prime_{ki} \overline{A^\prime_{kj}}|^2 \\
\geq \,\,& \frac{3}{4} \left(\frac{\alpha}{3} - \frac{2\eta}{3}\right)^2 \\
= \,\,& \frac{1}{12} \left( \alpha - 2\eta \right)^2.
\end{align*}
Summing over the $\eta$-unbalanced rows, we have 
\begin{align*}
E(A^\prime) & \geq \beta  \frac{(r^2 - r)}{3} \eta^2. 
\end{align*}
Summing over all the  $\eta$-balanced rows gives
\begin{align*}
D(A^\prime) & = \sum_{i \neq j} \sum_{k < k^\prime} \left| A^\prime_{ki}\overline{A^\prime_{kj}} - A^\prime_{k^\prime i}\overline{A^\prime_{k^\prime j}} \right|^2\\
& = \frac{1}{2} \sum_{k \neq k^\prime} \sum_{i \neq j}  \left| A^\prime_{ki}\overline{A^\prime_{kj}} - A^\prime_{k^\prime i}\overline{A^\prime_{k^\prime j}} \right|^2\\
& \geq \frac{1}{2} \cdot (1 - \beta)\frac{(r^2 - r)}{3} \cdot \frac{1}{12}\left(\alpha - 2\eta \right)^2. \\
& = (1 - \beta)\frac{(r^2 - r)}{72} \left(\alpha - 2\eta \right)^2.
\end{align*}
Combining the lower bounds for $D(A)$ and $E(A)$, and setting $\eta = \alpha/10$ gives
\begin{align*}
D(A^\prime) + 2E(A^\prime) & \geq  (1 - \beta)\frac{(r^2 - r)}{72} \left(\alpha - 2\eta \right)^2 +  2 \beta  \frac{(r^2 - r)}{3} \eta^2\\
& =  (r^2 - r) \left ( (1 - \beta)\frac{1}{72} \left(\frac{4}{5}\alpha \right)^2 +  \beta \frac{2}{3} \left( \frac{1}{10}\alpha \right)^2 \right)\\
& \geq c_0 (r^2 - r) \alpha^2
\end{align*}
for some absolute constant $c_0$.
Combining the above with Equation (\ref{eq:sumsq}), we have
\[ \sum_{i \neq j} |M_{ij}|^2 \leq 4(r^2 - r)\alpha^4 - c_0 (r^2 - r)  \alpha^2. \]
\end{proof}

We are now ready to define the full dependency matrix that we will use in the proof of Theorem~\ref{th:main}. 

\begin{definition}[Dependency Matrix, second construction]\label{def:dependency-second}
Let $\V = \{v_1, \dots v_n\}$ be a set of $n$ points in $\CC^d$ and let $V$ be the $n \times (d+1)$ matrix whose $i^{th}$ row is the vector $(v_i, 1)$.  For each matrix $A(l)$, where $l \in \L_{\geq 3}(\V)$, add $n - r$ column vectors of all zeroes, with length $r^2-r$, in the column locations corresponding to points not in $l$, giving an $(r^2 - r) \times n$ matrix. Let $A$ be the matrix obtained by taking the union of rows of these matrices for every $l \in \L_{\geq 3}(\V)$. We refer to $A$ as the {\em dependency matrix} of $\V$. 

\end{definition}

Note that this construction is a special case of the one given in Definition~\ref{def:dependency} and so satisfies all the properties mentioned there. In particular, we have $AV=0$ and the number of rows in $A$ equals $ n^2 - n - 2t_2(\V)$.


\section{Proof of Theorem~\ref{th:main}}\label{sec:main}

Before we prove the theorem, we give some key lemmas. As before, we consider two cases: When the dependency matrix $A$ satisfies Property-$S$ and when it does not. In the latter case, we rely on Lemma~\ref{le:not-propertys}. The following lemma deals with the former case.

\begin{lemma}\label{th:gap}
There exists an absolute constant $c_1 > 0$ such that the following holds. Let $\V= \{v_1, v_2, \dots, v_n\}$ be a set of points in $\CC^d$ not contained in a plane. Let $A$ be the $m \times n$ dependency matrix for $\V$, and suppose that $A$ satisfies Property-$S$. Then
\[t_2(\V) \geq \frac{3}{2}n + c_1 \sum_{r \geq 4} (r^2 - r) t_r(\V). \]
\end{lemma}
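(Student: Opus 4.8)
The plan is to combine the matrix scaling machinery with the improved off-diagonal bound from Lemma~\ref{le:squares-bound}, following the structure of the proof of Lemma~\ref{th:3n/2 Prop S} but extracting the extra savings that come from lines with four or more points. Since $A$ satisfies Property-$S$, Corollary~\ref{co:complexscaling} produces a scaling $A'$ in which every row has squared $\ell_2$ norm exactly $1+\epsilon$ (so $\alpha^2 = 1+\epsilon$, which tends to $1$) and every column has squared $\ell_2$ norm at least $m/n - \epsilon$. As before I set $M := A'^*A'$, so that $\rank(A) = \rank(M)$ and $M_{ii} \geq m/n - \epsilon$ for every $i$. The goal is to feed a \emph{better} upper bound on $\sum_{i \neq j}|M_{ij}|^2$ into Lemma~\ref{le:rankbound}, thereby improving the rank lower bound and, through $\rank(V) \leq n - \rank(A)$ together with $\dim(\V) \geq 3$, improving the lower bound on $t_2$.

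First I would set up the off-diagonal sum block by block. The matrix $A$ (and hence $A'$) is composed of the blocks $A(l)$, one for each special line $l \in \L_{\geq 3}$, occupying disjoint sets of columns except where two lines share a point. The key point is that a cross term $M_{ij}$ with $i,j$ on \emph{different} lines vanishes, since no row of $A$ has nonzero entries on two distinct lines; so $\sum_{i\neq j}|M_{ij}|^2$ decomposes as a sum over lines of the within-block contributions $\sum_{i \neq j}|M_{ij}^{(l)}|^2$, where $M^{(l)} = A'(l)^* A'(l)$. For a line $l$ with $r = |\V_l| \geq 4$, Lemma~\ref{le:squares-bound} gives the saving
\[ \sum_{i \neq j} |M_{ij}^{(l)}|^2 \leq 4(r^2 - r)\alpha^4 - c_0(r^2 - r)\alpha^2, \]
while for lines with $r = 3$ I use the plain bound of Corollary~\ref{co:offdiagonalsumineq}, namely $\sum_{i \neq j}|M_{ij}^{(l)}|^2 \leq 4(r^2-r)\alpha^4$ (with $t = 6$, $q = 3$ giving the factor $(1-1/q)t = 4$). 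Here I need the rows of each block $A(l)$ to share the common norm $\alpha$; if the global scaling does not guarantee this per block I would first renormalize each row to norm $\alpha = \sqrt{1+\epsilon}$, which only increases column norms, exactly as in the remark following Theorem~\ref{th:realscaling}. Summing over all special lines and recalling $\sum_{l} (r_l^2 - r_l) = m$ (the total number of rows), I obtain
\[ \sum_{i \neq j}|M_{ij}|^2 \leq 4m\alpha^4 - c_0 \alpha^2 \sum_{r \geq 4}(r^2 - r)\, t_r(\V). \]

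With this refined bound I would apply Lemma~\ref{le:rankbound} with $L = m/n - \epsilon$, let $\epsilon \to 0$ (so $\alpha \to 1$), and write $S := \sum_{r\geq 4}(r^2-r)t_r(\V)$, giving
\[ \rank(A) \geq \frac{n^2 (m/n)^2}{n(m/n)^2 + 4m - c_0 S} = \frac{mn}{m + 4n - c_0 S n/m}. \]
Using $\dim(\V) \geq 3$ and $\rank(V) \leq n - \rank(A)$ yields $4 \leq \rank(V) \leq n - \rank(A)$, i.e. $\rank(A) \leq n - 4$, which after substituting $m = n^2 - n - 2t_2$ and solving for $t_2$ should produce $t_2 \geq \tfrac{3}{2}n + c_1 S$ for a suitable absolute constant $c_1 > 0$. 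I expect the main obstacle to be the bookkeeping in this final algebraic manipulation: the saving term $c_0 S$ sits inside a denominator, so to convert the rank improvement into a clean additive gain $c_1 S$ in $t_2$ one must control the size of $S$ relative to $m$ and $n$. Since $S$ can be as large as order $n^2$ (when one line is very rich), I would either argue that the inequality is only claimed for an absolute constant $c_1$ small enough that the first-order Taylor expansion of the denominator is valid, or handle the regime of very large $S$ separately (where a single rich line already forces many ordinary lines elsewhere). Making $c_1$ genuinely absolute and independent of the configuration is the delicate point; everything else is a direct transcription of the Property-$S$ argument with the sharper Lemma~\ref{le:squares-bound} in place of Corollary~\ref{co:offdiagonalsumineq}.
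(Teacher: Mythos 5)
Your proposal follows the paper's proof essentially step for step: the same scaling via Corollary~\ref{co:complexscaling}, the same observation that $\sum_{i\neq j}|M_{ij}|^2$ decomposes block-by-block over the matrices $A(l)$ so that Lemma~\ref{le:squares-bound} applies to lines with $r\geq 4$ and the plain bound to $r=3$, and the same finish via Lemma~\ref{le:rankbound} and $\rank(V)\leq n-\rank(A)$ with $\rank(V)\geq 4$. The algebra you defer does go through cleanly with no separate treatment of large $S$: substituting $m=n^2-n-2t_2(\V)$ turns $\rank(A)\leq n-4$ into a quadratic inequality in $t_2(\V)$ whose relevant root is at least $\frac{3}{2}n+c_0S/8$ (the denominators stay positive because $S\leq m$ and $c_0<1$), yielding $c_1=c_0/8$.
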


\begin{proof}
Since $A$ satisfies Property-$S$, by Corollary~\ref{co:complexscaling} for every $\epsilon > 0$, there exists a scaling $A^\prime$ of $A$ such that for every $i \in [m]$
\begin{equation*}
\sum_{j \in [n]} \left|A^\prime_{ij}\right|^2 = 1 + \epsilon,
\end{equation*}
and for every $j \in [n]$
\begin{equation}
\label{eq:columnsum}
\sum_{i \in [m]} \left|A^\prime_{ij}\right|^2 \geq \frac{m}{n} - \epsilon.
\end{equation}

Let $C_i$ be denote the $i^{th}$ column of $A^\prime$, and let $M = A^{\prime*}A^\prime$. From (\ref{eq:columnsum}), we have $|M_{ii}|  = \langle C_i, C_i \rangle \geq \left(\frac{m}{n} - \epsilon \right)$.

To bound the sum of squares of the off-diagonal entries, we go back to the construction of the dependency matrix. Recall that the matrix $A$ was obtained by taking the union of rows of matrices $A(l)$, for each $l \in \L_{\geq 3}$. Then we have that $A^\prime$ is the union of scalings of the rows of the matrices $A(l)$, for each $l \in \L_{\geq 3}$. Note that $|M_{ij}| = \langle C_i, C_j\rangle$ and that the intersection of the supports of any two distinct columns in contained within a scaling of $A(l)$, for some $l \in \L_{\geq 3}$. Therefore, to get a bound on $\sum_{i \neq j}|M_{ij}|^2$, it suffices to consider these component matrices.
Combining the bounds obtained from Lemma~\ref{le:squares-bound}, with $\alpha = 1 + \epsilon$, we have 
\begin{align*}
\sum_{i \neq j} |M_{ij}|^2 & \leq \sum_{l \in \L_{3}} 4(r^2 - r)\alpha^4 + \sum_{l \in \L_{\geq 4}} \left( 4(r^2 - r)\alpha^4 - c_0 (r^2 - r)  \alpha^2\right) \\
& =  \sum_{l \in \L_{\geq 3}} 4(r^2 - r)\alpha^4 - \sum_{l \in \L_{\geq 4}} c_0 (r^2 - r)  \alpha^2 \\
& = 4m(1 + \epsilon)^4 - (1 + \epsilon)^2 c_0 \sum_{r \geq 4} (r^2 - r) t_r.
\end{align*}
Let $F = c_0 \sum_{r \geq 4}^{n} (r^2 - r) t_r$.
Lemma~\ref{le:rankbound} implies
\begin{align*}
\rank(M) & \geq \frac{n^2L^2}{nL^2 + \sum_{i \neq j} |M_{ij}|^2}\\
& \geq \frac{n^2\left(\frac{m}{n} - \epsilon\right)^2}{n\left(\frac{m}{n} - \epsilon\right)^2 +  4m(1 + \epsilon)^4 - (1 + \epsilon)^2 F}.
\end{align*}
Taking $\epsilon$ to 0, we get
\begin{align*}
\rank(M) & \geq \frac{n^2\left(\frac{m}{n}\right)^2}{n\left(\frac{m}{n}\right)^2 +  4m -   F} \\
& = n - \frac{4n^2m - n^2 F}{m^2 +  4mn -  n F}.
\end{align*}
Since \[ \adim(\V) = \rank(V) - 1 \leq \frac{4n^2m - n^2 F}{m^2 +  4mn -  n F} - 1, \] 
if
\begin{align*}
\frac{4n^2m - n^2 F}{m^2 +  4mn -  n F} & < 4,
\end{align*}
then $\V$ must be contained in a plane, contradicting the assumption of the theorem. Substituting $m = n^2 - n - 2t_2(\V)$ and simplifying gives
\begin{align*}
4 t_2^2 - (2 n^2  + 4 n )t_2 + 3 n^3 - 3 n^2 + \frac{n^2F}{4}  - nF   & > 0.
\end{align*}
This holds when
\begin{align*}
t_2(\V)& < \frac{3n}{2} + \frac{F}{8} = \frac{3n}{2} +  \frac{c_0}{8} \sum_{r = 4}^{n} (r^2 - r) t_r(\V),
\end{align*}
which completes the proof.
\end{proof}

We now have the following easy corollary.
\begin{corollary}\label{th:eitherorgap}
There exists a positive integer $n_0$ such that the following holds. Let $c_1$ be the constant from Lemma~\ref{th:gap} and let $\V$ be a set of $n \geq n_0$ points in $\CC^d$ not contained in a plane. Then at least one of the following holds:
\begin{enumerate}
\item There exists a point $v \in \V$ contained in at least $\frac{n}{2}$ ordinary lines;
\item $ t_2(\V) \geq \frac{3}{2}n + c_1\sum_{r\geq 4}(r^2-r)t_r(\V)$.
\end{enumerate}
\end{corollary}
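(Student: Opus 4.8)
The plan is to run the same Property-$S$ dichotomy used in Corollaries~\ref{th:eitheror} and~\ref{th:eitherorgap1.8}, but now feeding the refined dependency matrix of Definition~\ref{def:dependency-second} into Lemma~\ref{th:gap} rather than the cruder Lemma~\ref{th:3n/2 Prop S}. First I would let $A$ be the (second-construction) dependency matrix of $\V$. If $A$ satisfies Property-$S$, then Lemma~\ref{th:gap} applies directly and yields $t_2(\V) \geq \frac{3}{2}n + c_1\sum_{r\geq 4}(r^2-r)t_r(\V)$, which is exactly conclusion~(2); so in this branch there is nothing further to do.

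The substantive branch is when $A$ fails Property-$S$, where I would invoke Lemma~\ref{le:not-propertys}. The only real freedom is the parameter $b^*$, and I would set $b^* = \lfloor (n+4)/6 \rfloor$; for $n \geq n_0$ this lies strictly between $1$ and $2n/3$, so the lemma applies. The arithmetic is arranged so that conclusion~(1) of Lemma~\ref{le:not-propertys} immediately gives conclusion~(1) of the corollary: since $b^* \leq (n+4)/6$, a point lying on at least $\frac{2}{3}(n+1) - b^*$ ordinary lines lies on at least $\frac{2}{3}(n+1) - \frac{n+4}{6} = \frac{n}{2}$ of them.

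It then remains to handle conclusion~(2) of Lemma~\ref{le:not-propertys}, namely $t_2(\V) \geq nb^*/2$, and to show it forces conclusion~(2) of the corollary. Here I would exploit that this bound is genuinely quadratic and can absorb the extra term. Concretely, $b^* \geq (n-2)/6$ gives $t_2(\V) \geq n(n-2)/12$, while the summation in conclusion~(2) is controlled by the total number of pairs: $\sum_{r\geq 4}(r^2-r)t_r(\V) = 2\sum_{r\geq4}\binom{r}{2}t_r(\V) \leq 2\binom{n}{2} = n^2 - n$, since each $r$-rich line accounts for $\binom{r}{2}$ distinct pairs and distinct lines share at most one point. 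Because decreasing $c_1$ only weakens Lemma~\ref{th:gap} (and hence conclusion~(2)), I may assume $c_1 \leq 1/24$, so the right-hand side of conclusion~(2) is at most $\frac{3}{2}n + \frac{n^2}{24}$. Comparing this with $\frac{n(n-2)}{12}$ reduces to the one-variable inequality $n^2 \geq 40n$, which holds for all $n \geq n_0$ as soon as $n_0 \geq 40$, completing this branch.

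The only point needing care — and the place I expect the (minor) bulk of the work — is precisely this last comparison: one must check that the quadratic lower bound $nb^*/2$ dominates the term $c_1\sum_{r\geq4}(r^2-r)t_r(\V)$, which is itself only $O(n^2)$. This is exactly why both the pairs-counting bound $\sum_{r\geq4}(r^2-r)t_r(\V) \leq n^2-n$ and the freedom to shrink $c_1$ are needed; once both are in hand the estimate is routine, and the choice $b^* = \lfloor(n+4)/6\rfloor$ is pinned down by the requirement that conclusion~(1) of Lemma~\ref{le:not-propertys} exactly reach the threshold $\frac{n}{2}$.
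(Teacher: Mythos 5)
Your proposal is correct and follows essentially the same route as the paper: the identical dichotomy (Property-$S$ plus Lemma~\ref{th:gap} versus Lemma~\ref{le:not-propertys}), with $b^*\approx n/6$, a shrinking of $c_1$, and the bound $\sum_{r\geq 4}(r^2-r)t_r(\V)\leq n^2$. The only (cosmetic) difference is the calibration of $b^*$: the paper picks $b^*$ adaptively so that case~(2) of Lemma~\ref{le:not-propertys} instantly gives conclusion~(2) and then checks $b^*<n/6$ to get conclusion~(1), whereas you fix $b^*=\lfloor(n+4)/6\rfloor$ so that conclusion~(1) is immediate and then verify the quadratic comparison for conclusion~(2); the two verifications are arithmetically equivalent.
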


\begin{proof}
If $A$ satisfies Property-$S$, then we are done by Lemma~\ref{th:gap}.  Otherwise, let $b^*$ be an integer such that
\begin{equation}
\label{eq:b*}
\frac{n}{2}(b^*-1)< \frac{3n}{2}+ c_1 \sum_{r\geq 4}(r^2-r)t_r(\V) \leq \frac{n}{2}b^*.
\end{equation}

Clearly we have $b^* > 1$. Recall that $\sum_{r\geq 4}(r^2-r)t_r(\V) < n^2,$ implying that for $c_1$ small enough and $n$ large enough, 
\begin{equation}
\label{eq:b*upperbound}
{ b^* < 4 +\frac{2c_1}{n}\sum_{r\geq 4}(r^2-r)t_r(\V) < \frac{1}{6}n}.
\end{equation}
Now by Lemma~\ref{le:not-propertys} and (\ref{eq:b*}), either the number of ordinary lines
\[t_2(\V) \geq \frac{n}{2}b^* \geq \frac{3n}{2}+ c_1 \sum_{r\geq 4}(r^2-r)t_r(\V), \]
or, using (\ref{eq:b*upperbound}), there exists a point $v \in \V$, such that the number of ordinary lines containing $v$ is at least
\[ \frac{2}{3}(n+1) - b^* > \frac{1}{2}n. \]

\end{proof}

The following lemma will be crucially used in the proof of Theorem~\ref{th:main}.
\begin{lemma}\label{le:sumVprime}
Let $\V$ be a set of $n$ points in $\CC^d$, and $\V^\prime = \V \setminus \{v\}$ for some $v \in \V$. Then 
\[ \sum_{r \geq 4}(r^2 - r)t_r(\V^\prime) \geq \sum_{r \geq 4}(r^2 - r)t_r(\V) - 4 (n - 1) .\]
\end{lemma}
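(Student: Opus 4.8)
The plan is to track precisely how the quantity $S(\W) := \sum_{r \geq 4}(r^2-r)t_r(\W)$ changes when the single point $v$ is deleted, using the fact that only lines through $v$ can be affected. First I would record that every geometric line $\ell$ contributes $g(|\V_\ell|)$ to $S(\V)$, where $g(r) := r^2-r$ for $r \geq 4$ and $g(r) := 0$ for $r \leq 3$. Lines avoiding $v$ carry the same number of points in $\V$ and in $\V'$, so their contributions cancel; a line through $v$ carrying $r$ points of $\V$ carries exactly $r-1$ points of $\V' = \V \setminus \{v\}$. Writing $r_\ell := |\V_\ell|$, this gives the identity
\[ S(\V) - S(\V') = \sum_{\ell \ni v}\bigl(g(r_\ell) - g(r_\ell - 1)\bigr), \]
the sum ranging over all lines through $v$.

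The heart of the argument is a per-line estimate: for every integer $r$,
\[ g(r) - g(r-1) \leq 4(r-1). \]
I would verify this by cases. For $r \leq 3$ both terms vanish while the right-hand side is non-negative. For $r = 4$ the left side is $12 - 0 = 12$, which equals $4\cdot 3$ exactly. For $r \geq 5$ a direct expansion gives $g(r)-g(r-1) = 2r-2$, and $2r-2 \leq 4(r-1)$ reduces to $2 \leq 2r$. Note also that $g$ is non-decreasing, so each summand above is non-negative.

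Finally I would sum this per-line bound over all lines through $v$. The lines through $v$ partition $\V \setminus \{v\}$: each of the $n-1$ points other than $v$ lies on a unique line through $v$, and that line accounts for $r_\ell - 1$ of these points, so $\sum_{\ell \ni v}(r_\ell - 1) = n-1$. Combining this with the displayed identity and the per-line bound yields
\[ S(\V) - S(\V') \leq 4\sum_{\ell \ni v}(r_\ell - 1) = 4(n-1), \]
which is exactly the claimed inequality.

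I expect the only subtlety — not really an obstacle — to be the case $r = 4$, which is the unique tight case and is precisely what forces the constant $4$ rather than $2$. Because the summation defining $S$ begins at $r = 4$, deleting $v$ can collapse a $4$-rich line into a $3$-rich line and erase its entire contribution of $12$, whereas for $r \geq 5$ the incremental loss is only $2r-2 = 2(r-1)$. This is worth flagging explicitly, since it explains why the naive guess of $2(n-1)$ is too optimistic.
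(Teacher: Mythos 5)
Your proof is correct and follows essentially the same route as the paper: both observe that only lines through $v$ are affected, compute the per-line loss (which is $2(r-1)$ for $r\geq 5$ and $12 = 4\cdot 3$ for $r=4$), and use the fact that the lines through $v$ partition the remaining $n-1$ points to sum up to the bound $4(n-1)$. The paper phrases this as a charging argument (each point other than $v$ is charged $0$, $2$, or $4$), while you phrase it as a uniform per-line estimate $g(r)-g(r-1)\leq 4(r-1)$, but these are the same computation.
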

\begin{proof}
Note that when we remove $v$ from the set $\V$, we only affect lines that contain $v$. In particular, ordinary lines containing $v$ are removed and the number of points on every special line containing $v$ goes down by 1. Every other line remains unchanged and so it suffices to consider only lines that contain the point $v$. 

We consider the difference  \[ K = \sum_{r \geq 4}(r^2 - r)t_r(\V) - \sum_{r \geq 4}(r^2 - r)t_r(\V^\prime). \]

We will consider the contribution of a line $l$ determined by $\V$ to the difference $K$.

Each line $l \in \L_{\geq 5}(\V)$, i.e., a line that has $r \geq 5$ points, that contains $v$ contributes $r^2 - r$ to the summation $\sum_{r \geq 4}(r^2 - r)t_r(\V)$. In $\V^\prime$, $l$ has $r - 1$ points, and contributes $(r-1)^2 - (r-1)$ to the summation $\sum_{r \geq 4}(r^2 - r)t_r(\V^\prime)$.  Therefore, $l$ contributes $2(r-1)$ to the difference $K$. We may charge this contribution to the points on $l$ that are not $v$. There are $r-1$ other points on $l$, so each point contributes $2$ to $K$.

Each line $ l \in \L_{4}(\V)$ that contains $v$ contributes $r^2 - r = 12$ to the summation $\sum_{r \geq 4}(r^2 - r)t_r(\V)$. These lines contain three points in $\V^\prime$, and so do not contribute anything in the $\sum_{r \geq 4}(r^2 - r)t_r(\V^\prime)$ term. Once again, we charge this contribution to the points lying on $l$ that are not $v$. Each such line has three points on it other than $v$, so each point contributes $12/3 = 4$ to $K$.

There is a unique line containing $v$ and any other point, and each point either contributes $0$, $2$ or $4$ to $K$. This gives \[ \sum_{r \geq 4}(r^2 - r)t_r(\V) - \sum_{r \geq 4}(r^2 - r)t_r(\V^\prime) \leq 4(n-1). \]
Rearranging completes the proof.
\end{proof}

We are now ready to prove the main theorem. For convenience, we restate the theorem.
\begin{reptheorem}{th:main}
There exists an absolute constant $c > 0$ and a positive integer $n_0$ such that the following holds. Let  $\V$ be a set of $n \geq n_0$ points in $\CC^3$ with at most $\frac{1}{2}n$ points contained in any plane. Then
\[ t_2(\V) \geq \frac{3}{2}n + c \sum_{r \geq 4} r^2 t_r(\V). \]
\end{reptheorem}

\begin{proof}

The remainder of the proof is similar to the proof of Theorem~\ref{th:higherdim}, i.e., we use Corollary~\ref{th:eitherorgap} to find a point incident to a large number of ordinary lines,  ``prune'' this point, and then repeat this on the smaller set of points. We stop when either we can not find such a point, in which case Corollary~\ref{th:eitherorgap} guarantees a large number of ordinary lines, or when we have accumulated enough ordinary lines.

As before, consider the following algorithm:
Let $\V_0 := \V$ and $j = 0$.
\begin{enumerate}
\item If $\V_j$ satisfies case (2) of Lemma~\ref{th:eitherorgap}, then stop.
\item Otherwise, there must exist a point $v_{j+1}$ incident to at least $\frac{n - j}{2}$ ordinary lines determined by $\V_j$. Let $\V_{j+1} = \V_j \setminus \{ v_{j+1} \}$.
\item Set $j = j + 1$. If $j \geq n/2$, then stop. Otherwise, go to Step 1.
\end{enumerate}

Note that since no plane contains more than $n/2$ points, at no point will the algorithm stop because the configuration becomes planar. That is, we can use Corollary~\ref{th:eitherorgap} at every step of the algorithm. We now analyze the two stopping conditions for the algorithm, and show that we can always find enough ordinary lines by the time the algorithm stops.

Suppose that we stop because $\V_j$ satisfies case (2) of Corollary~\ref{th:eitherorgap} for some $1 \leq j < n/2$.  From case (2) of Lemma~\ref{th:eitherorgap} and Lemma~\ref{le:sumVprime}, we have  
\begin{align}
\nonumber t_2(\V_j) & \geq \frac{3(n - j)}{2}+c_1\sum_{r\geq 4} (r^2-r)t_r(\V_j)\\
\label{eq:ordinaryvj}& \geq\frac{3(n - j)}{2} + c_1\left(\sum_{r\geq 4} (r^2-r)t_r(\V) - 4 \sum_{i = 1}^j (n-i)\right).
\end{align}
On the other hand, each pruned point $v_i$, $1 \leq i \leq j$, is incident to at least $\frac{n - i + 1}{2} > \frac{n - i}{2}$ ordinary lines determined by $\V_{i-1}$, and hence, at most $(n - i - \frac{n - i + 1}{2})/2 < \frac{n - i}{4}$ special lines.  Note that an ordinary line in $\V_i$ might not be ordinary in $\V_{i-1}$ if contains $v_{i}$. Thus, in order to lower bound the total number of ordinary lines in $\V$, we sum over the number of ordinary lines contributed by each of the pruned points $v_{i}$,
$1 \leq i \leq j$, and subtract from the count the number of potential lines that could contain $v_{i}$. Then the number of ordinary lines contributed by the pruned points is at least
\begin{align}
\label{eq:ordinarypruning}  \sum_{i=1}^j \left(\frac{n - i}{2} - \frac{n - i}{4} \right) = \frac{1}{4}\sum_{i=1}^j \left(n - i\right).
\end{align}
Combining (\ref{eq:ordinaryvj}) and (\ref{eq:ordinarypruning}), we have 
\begin{align*}
t_2(\V) & \geq \frac{3}{2}(n-j) +c_1\left(\sum_{r\geq 4}(r^2-r)t_r(\V) - 4 \sum_{i = 1}^j (n-i) \right) + \frac{1}{4}\sum_{i=1}^j \left(n - i\right)\\
& = \frac{3}{2}n + c_1 \sum_{r\geq 4}(r^2-r)t_r(\V) + \left(\frac{1}{4} - 4c_1\right) \sum_{i = 1}^j (n-i) - \frac{3}{2}j.
\end{align*}
For $c_1$ small enough and $n$ large, the term $\left(\frac{1}{4} - 4c_1\right) \sum_{i = 1}^j (n-i) - \frac{3}{2}j$ is positive.  Therefore, there exists some absolute constant $c > 0$ such that 
\[ t_2(\V) \geq \frac{3}{2}n + c\sum_{r\geq 4}r^2t_r(\V). \]

We now consider the case when the algorithm stops because $j \geq n/2$. Note that at this point, we will have pruned exactly $j$ points. Each pruned point $v_i$, $1 \leq i \leq j$, is incident to $\frac{n - i + 1}{2} > \frac{n - i}{2}$ ordinary lines determined by $\V_{i-1}$. However, as many as $i - 1 < i$ of these lines could contain other pruned points $v_k$, $k < i$, i.e., lines that could be special in $\V$. Therefore the total number of ordinary lines determined by $\V$ is at least
\begin{align*}
t_2(\V) \geq \sum_{i=1}^j \frac{n - i}{2} - \sum_{i = 1}^j i = \frac{1}{2} \sum_{i = 1}^j (n - 3 i) \geq \frac{n^2 - 8n - 9}{16}.
\end{align*}
Note that $n^2 > \sum_{r\geq 4}(r^2-r)t_r(\V)$, which gives 
\[ t_2(\V)\geq\frac{3}{2}n+c\sum_{r\geq 4}r^2t_r(\V) \]
for some absolute constant $c > 0$ and $n$ large enough.

\end{proof}

\bibliographystyle{alpha}
\bibliography{gapproof}

\begin{thebibliography}{BDWY13}

\bibitem[Alo09]{alon09}
N.~Alon.
\newblock Perturbed identity matrices have high rank: Proof and applications.
\newblock {\em Combinatorics, Probability and Computing}, 18(1-2):3--15, 2009.

\bibitem[BCH74]{bch74}
R.~K. Brayton, D.~Coppersmith, and A.~J. Hoffman.
\newblock Self-orthogonal latin squares of all orders $n \neq 2, 3, 6$.
\newblock {\em Bulletin of the American Mathematical Society}, 80, 1974.

\bibitem[BDWY13]{bdwy13}
B.~Barak, Z.~Dvir, A.~Wigderson, and A.~Yehudayoff.
\newblock Fractional {Sylvester--Gallai} theorems.
\newblock {\em Proceedings of the National Academy of Sciences},
  110(48):19213--19219, 2013.

\bibitem[BM90]{bm90}
P.~Borwein and W.~Moser.
\newblock A survey of {Sylvester's} problem and its generalizations.
\newblock {\em Aequationes Mathematicae}, 40(1):111--135, 1990.

\bibitem[Boj13]{bojanowski03}
R.~Bojanowski.
\newblock Zastosowania uog\'olnionej nier\'owno\'sci
  {B}ogomolova-{M}iyaoki-{Y}au.
\newblock {\em Master's thesis at the University of Warsaw}, 2013.

\bibitem[CS93]{cs93}
J.~Csima and E.~T. Sawyer.
\newblock There exist $6n/13$ ordinary points.
\newblock {\em Discrete \& Computational Geometry}, 9(2):187--202, 1993.

\bibitem[DSW14]{dsw14}
Z.~Dvir, S.~Saraf, and A.~Wigderson.
\newblock Improved rank bounds for design matrices and a new proof of {K}elly's
  theorem.
\newblock In {\em Forum of Mathematics, Sigma}, volume~2, page~e4. Cambridge
  Univ Press, 2014.

\bibitem[dZ18a]{dezeeuw2018a}
F.~de~{Z}eeuw.
\newblock Ordinary lines in space.
\newblock {\em arXiv preprint arXiv:1803.09524}, 2018.

\bibitem[dZ18b]{dezeeuw18b}
F.~de~{Z}eeuw.
\newblock Spanned lines and {L}anger's inequality.
\newblock {\em arXiv preprint arXiv:1802.08015}, 2018.

\bibitem[EPS06]{eps06}
N.~Elkies, L.~M. Pretorius, and K.~Swanepoel.
\newblock {Sylvester--Gallai} theorems for complex numbers and quaternions.
\newblock {\em Discrete \& Computational Geometry}, 35(3):361--373, 2006.

\bibitem[Gal44]{gal44}
T.~Gallai.
\newblock Solution of problem 4065.
\newblock {\em American Mathematical Monthly}, 51:169--171, 1944.

\bibitem[GT13]{gt13}
B.~Green and T.~Tao.
\newblock On sets defining few ordinary lines.
\newblock {\em Discrete \& Computational Geometry}, 50(2):409--468, 2013.

\bibitem[Hil73]{hil73}
A.~J.~W. Hilton.
\newblock On double diagonal and cross latin squares.
\newblock {\em Journal of the London Mathematical Society}, 2(4):679--689,
  1973.

\bibitem[Hir83]{hir83}
F.~Hirzebruch.
\newblock Arrangements of lines and algebraic surfaces.
\newblock In {\em Arithmetic and geometry}, pages 113--140. Springer, 1983.

\bibitem[Kel86]{kelly86}
L.~Kelly.
\newblock A resolution of the {Sylvester-Gallai} problem of {J}.-{P}. {Serre}.
\newblock {\em Discrete \& Computational Geometry}, 1(1):101--104, 1986.

\bibitem[KM58]{kelly58}
L.~Kelly and W.~Moser.
\newblock On the number of ordinary lines determined by $n$ points.
\newblock {\em Canadian Journal of Mathematics}, 10:210--219, 1958.

\bibitem[Lan03]{langer03}
A.~Langer.
\newblock Logarithmic orbifold {E}uler numbers of surfaces with applications.
\newblock {\em Proceedings of the London Mathematical Society}, 86(2):358--396,
  2003.

\bibitem[Mel40]{mel40}
E.~Melchior.
\newblock \"{U}ber {vielseite} der projektiven {ebene}.
\newblock {\em Deutsche Math}, 5:461--475, 1940.

\bibitem[Mot51]{motzkin51}
Th. Motzkin.
\newblock The lines and planes connecting the points of a finite set.
\newblock {\em Transactions of the American Mathematical Society}, pages
  451--464, 1951.

\bibitem[Pok16]{pokora16}
P.~Pokora.
\newblock The orbifold {L}anger-{M}iyaoka-{Y}au inequality and
  {H}irzebruch-type inequalities.
\newblock {\em arXiv preprint arXiv:1612.05141}, 2016.

\bibitem[RS89]{rs89}
U.~Rothblum and H.~Schneider.
\newblock Scalings of matrices which have prespecified row sums and column sums
  via optimization.
\newblock {\em Linear Algebra and its Applications}, 114:737--764, 1989.

\bibitem[Ser66]{ser66}
J.-P. Serre.
\newblock Advanced problem 5359.
\newblock {\em American Mathematical Monthly}, 73(1):89, 1966.

\bibitem[Syl93]{syl1893}
J.~J. Sylvester.
\newblock Mathematical question 11851.
\newblock {\em Educational Times}, 59(98):256, 1893.

\end{thebibliography}

\end{document}